\documentclass[10pt]{article}
\usepackage{amssymb,amsmath, amsthm,bbm, mathrsfs,verbatim}
\usepackage{fullpage}
%\textwidth 12cm
%\textheight 20cm
%
%\newtheoremstyle{theorem}%name
%{10pt} % space above
%{10pt} % space below
%{\sl} % bofy font
%{\parindent} % ident - empty=no indent, \parindent= paragraph indent
%{\bf} % thm head font
%{. } % punctuation after thm head
%{ } % space after thm head: `` ``=normal \newline=linebreak
%{} % thm head specification
\theoremstyle{theorem}
\newtheorem{thm}{Theorem}
\newtheorem{cor}[thm]{Corollary}
\newtheorem{prop}[thm]{Proposition}

%\newtheoremstyle{defi}%name
%{10pt} % space above
%{10pt} % space below
%{\rm} % bofy font
%{\parindent} % ident - empty=no indent, \parindent= paragraph indent
%{\bf} % thm head font
%{. } % punctuation after thm head
%{ } % space after thm head: `` ``=normal \newline=linebreak
%{} % thm head specification
%\theoremstyle{defi}
\newtheorem{defn}[thm]{Definition}
\newtheorem{lem}[thm]{Lemma}

\newcommand{\supp}{\operatorname{supp}}

\begin{document}

\author{Emily J. King$^1$\\
$^1$Technische Universit\"at Berlin\\
Stra\ss e des 17. Juni 136\\
10623 Berlin, Germany\\
email: king@math.tu-berlin.de}
\title{Smooth {P}arseval frames for {$L^2(\mathbb{R})$} and generalizations to {$L^2(\mathbb{R}^d)$}}
\date{October 29, 2012}

\maketitle

\abstract{Wavelet set wavelets were the first examples of wavelets that may not have associated multiresolution analyses.  Furthermore, they provided examples of complete orthonormal wavelet systems in $L^2(\mathbb{R}^d)$ which only require a single generating wavelet.  Although work had been done to smooth these wavelets, which are by definition discontinuous on the frequency domain, nothing had been explicitly done over $\mathbb{R}^d$, $d >1$.  This paper, along with another one cowritten by the author, finally addresses this issue. Smoothing does not work as expected in higher dimensions.  For example, Bin Han's proof of existence of Schwartz class functions which are Parseval frame wavelets and approximate Parseval frame wavelet set wavelets does not easily generalize to higher dimensions.  However, a construction of wavelet sets in $\widehat{\mathbb{R}}^d$ which may be smoothed is presented.  Finally, it is shown that a commonly used class of functions cannot be the result of convolutional smoothing of a wavelet set wavelet.
\\[2 pt]{\bf AMS Subject Classification:} 42C40, 42B99, 46A11
\\[2 pt]{\bf Key Words and Phrases:} Parseval wavelet sets, smoothing, frame bound gaps, partitions of unity, Schwartz functions
}

%%%%%%%%%%%%%%%%%%%%%%%%%
\section{Introduction}
%%%%%%%%%%%%%%%%%%%%%%%%%
\subsection{Motivation}
%%%%%%%%%%%%%%%%%%%%%%%%%
For many years it was thought that a single wavelet could not generate an orthonormal basis for $L^2(\mathbb{R}^d)$, $d>1$; however, the groundbreaking work of Dai, Larson, and Speegle \cite{DaL98} \cite{DLS97} \cite{DLS98} and Hern\'{a}ndez, Wang, and Weiss \cite{HWW96} \cite{HWW97} introduced \emph{wavelet sets}, which provided a counter-example to this common belief.  However, with the exception of the recent paper \cite{Mer08}, all constructed wavelet sets for $\mathbb{R}^d$, $d>2$, yielded wavelets with very complicated, fractal-like spectral support  \cite{BMM99} \cite{BLe99} \cite{BLe01} \cite{DLS97} \cite{DLS98} \cite{SW98} \cite{Zak96}.  Wavelet set wavelets have, by definition, discontinuous Fourier transforms and so do not even lie in $L^1(\mathbb{R}^d)$.  A number of successful attempts, some constructive, have been made to smooth $1$-dimensional wavelet set wavelets \cite{BJMP} \cite{ChH97} \cite{BHn94} \cite{BHn97} \cite{HWW96} \cite{HWW97} (see \cite{Dissert} for a summary of the various results).  A systematic construction of (non-orthogonal) Parseval frame wavelet sets without complicated spectral support may be found in \cite{BSu02}.  In \cite{BenKing09}, these wavelets were smoothed on the frequency domain by convolution with elements of approximate identities, yielding frame wavelets which converged in $L^p$ to the original Parseval frame wavelets. Not only did this seemingly natural method yield unexpected results, namely the resulting frame bounds were uniformly bounded away from one (called \emph{frame bound gaps}), but the error worsened as the dimension of the wavelet set increased.  In this paper, we attempt to generalize Bin Han's non-constructive proof of the existence of Schwartz class functions which approximate Parseval wavelet set wavelets in $L^2(\mathbb{R})$ to $L^2(\mathbb{R}^d)$.  We show that the natural approaches to such a generalization fail.  These results show that smoothing of wavelet sets does not trivially generalize to higher dimensions.  Furthermore, we show that a collection of well-known functions which also approximate wavelet set wavelets generate frames with upper frame bounds that converge to $1$ and thus cannot result from convolutional smoothing by an approximate identity.
%%%%%%%%%%%%%%%%%%%%
\subsection{Preliminaries}
%%%%%%%%%%%%%%%%%%%%%
We begin by defining the key mathematical items of interest.
\begin{defn}
A sequence $\lbrace e_{j} \rbrace _{j\in J}$ in a separable Hilbert space $\mathcal{H}$ is a \emph{frame} for $\mathcal{H}$ if there exist constants $0<A\leq B < \infty$ such that
\begin{equation} \label{eqn:1}
\forall f \in \mathcal{H}\textrm{, } \qquad A\Vert f \Vert ^2 \leq \sum_{j \in J} \vert \langle f,e_{j} \rangle \vert ^2 \leq B\Vert f \Vert ^2 {.}
\end{equation}
The maximal such $A$ and minimal such $B$ are the \emph{optimal frame bounds}. In this paper, the phrase \emph{frame bound} will always mean the optimal frame bound, where $A$ is the \emph{lower frame bound} and $B$ is the \emph{upper frame bound}.  A frame is \emph{Parseval} if $A = B = 1$. 
\end{defn}
Every orthonormal basis is a frame. One may view frames as generalizations of orthonormal bases which mimic the reconstruction properties (i.e.: $\forall x, x= \sum \langle x, e_j \rangle e_j$)  of orthonormal bases but may have some redundancy.   Frames first appeared in the seminal paper by Duffin and Schaeffer \cite{DS52}.
\begin{defn} 
Let $\psi \in L^2 \left( \mathbb{R}^d \right)$ and define the \emph{(dyadic) wavelet system},
\begin{equation*}
\mathcal{W} \left( \psi \right) = \lbrace D_n T_k \psi(x) : n \in \mathbb{Z}, k \in \mathbb{Z}^d \rbrace = \lbrace 2^{nd/2} \psi\left( 2^n x - k \right) : n \in \mathbb{Z}, k \in \mathbb{Z}^d \rbrace \mathrm{.}
\end{equation*}
If $\mathcal{W} \left( \psi \right)$ is an orthonormal basis (respectively, Parseval frame, frame) for $L^2 \left( \mathbb{R}^d \right)$, then $\psi$ is an \emph{orthonormal wavelet} (respectively, \emph{Parseval frame wavelet, frame wavelet}) or simply a \emph{wavelet} for $L^2(\mathbb{R}^d)$.
\end{defn}
The first wavelet system appeared in Haar's thesis \cite{Haar09} \cite{Haar10}.  For any measurable set $S \subseteq \mathbb{R}^d$, the \emph{characteristic function of $S$}, $\mathbbm{1}_S$, is
\begin{equation*}
\mathbbm{1}_S(x) = \left\{ \begin{array}{ccc} 1 & ; & x\in S \\ 0 &  ; & \mathrm{else} \end{array} \right..
\end{equation*}
\begin{defn}
If $L$ is a measurable subset of $\widehat{\mathbb{R}}^d$ and $\mathcal{W} ( \check{\mathbbm{1}}_L )$ is an orthonormal basis (respectively, frame or Parseval frame) for $L^2(\mathbb{R}^d)$, then $L$ is an \emph{orthogonal} (respectively, \emph{frame} or \emph{Parseval frame}) \emph{wavelet set} or simply a \emph{wavelet set}.
\end{defn}
Wavelet sets are completely characterized by simple geometric properties.  Classical examples of such wavelets which predate formal wavelet set theory are the \emph{Shannon} or \emph{Littlewood-Paley wavelet} $\check{\mathbbm{1}}_{[-1,-1/2)\cup[1/2,1)}$ and the \emph{Journ\'{e} wavelet} $\check{\mathbbm{1}}_{[-16/7,-2)\cup[-1/2,-2/7)\cup[2/7,1/2)\cup[2,16/7)}$.  Finally, we comment on the conventions used in this paper.  For a function $f \in L^1(\mathbb{R}^d)$, the \emph{Fourier transform of $f$} is defined to be
\begin{equation*}
\mathcal{F}(f)(\gamma) = \hat{f}(\gamma) = \int f(x)e^{-2 \pi i x \cdot \gamma} dx \textrm{.}
\end{equation*}
By Plancherel's Theorem, $\mathcal{F}$ extends from $L^1 \cap L^2$ to a unitary operator $L^2 \rightarrow L^2$. We denote the inverse Fourier transform of a function $g \in L^2(\widehat{\mathbb{R}}^d)$ as $\mathcal{F}^{-1} g = \check{g}$. Our definition of support will not be the traditional one to get around conflicts between measure and topological closure.  For $f: \mathbb{R}^d \rightarrow \mathbb{C}$, the \emph{support of $f$}, $\supp f$ is the following equivalence class of measurable sets
\begin{equation*}
\left\{ S \subseteq \mathbb{R}^d: \int_{\mathbb{R}^d\backslash S} |f(x)| dx=0, \textrm{ and if $R \subset S$ then $\int_{ S \backslash R} |f(x)| dx \geq 0$ } \right\} .
\end{equation*}
We shall still speak of \emph{the} support of a function, just as we refer to \emph{a} function in an $L^p$ space.  So, $\supp f \subseteq \mathbb{R}^d$ means that at least one element in the equivalence class is a subset of $\mathbb{R}^d$, and $f$ \emph{is compactly supported} means that $\supp f \subseteq K$, where $K$ is a compact set.  Similarly, if $f:\mathbb{R}^d \rightarrow \mathbb{C}$ and for $\epsilon > 0$, define $\supp_\epsilon f$ to be the the equivalence class of measurable sets.
\begin{equation*}
\left\{ S \subseteq \mathbb{R}^d: \int_{\mathbb{R}^d\backslash S}( |f(x)| - \epsilon) dx <0, R \subset S \Rightarrow \int_{ S\backslash R} (|f(x)|-\epsilon) dx \geq  0 \right\} .
\end{equation*}
%%%%%%%%%%%%%%%%%%%%%%%%%%%%%
\subsection{Background}
%%%%%%%%%%%%%%%%%%%%%%%%%%%%%%
\begin{defn}
The space $C_c^\infty(\mathbb{R}^d)$ consists of functions $f: \mathbb{R}^d \rightarrow \mathbb{C}$ which are infinitely differentiable and compactly supported.  Given a multi-index $\alpha = (\alpha_1, \alpha_2, \hdots, \alpha_d) \in \left(\mathbb{N}\cup\{0\}\right)^d$, we write $\vert \alpha \vert = \sum_{i=1}^d \alpha_i$, $x^\alpha = \prod_{i=1}^d x_i^{\alpha_i}$, and $D^\alpha = \frac{\partial_{\alpha_1}}{\partial x_1^{\alpha_1}}\frac{\partial_{\alpha_2}}{\partial x_2^{\alpha_2}} \cdots \frac{\partial_{\alpha_d}}{\partial x_d^{\alpha_d}}$. An infinitely differentiable function $f : \mathbb{R}^d \rightarrow \mathbb{C}$ is an element of the \emph{Schwartz space} $\mathscr{S}(\mathbb{R}^d)$ if
\begin{equation*}
\forall n = 0,1,\hdots \qquad \sup_{\vert \alpha \vert \leq n, \alpha \in \left(\mathbb{N}\cup\{0\}\right)^d} \sup_{x \in \mathbb{R}^d} \left( 1 + \Vert x \Vert^2 \right)^\alpha \vert D^\alpha f (x)\vert < \infty.
\end{equation*}
Clearly $\mathscr{S} \subseteq L^1$, so the Fourier transform is well defined on $\mathscr{S}$ and is in fact a topological automorphism. Since $C_c^\infty \subseteq \mathscr{S}$, the (inverse) Fourier transform of a smooth compactly supported function is smooth.  We will denote the Hardy space $\lbrace f\in L^2(\mathbb{R}) : \supp \hat{f} \subseteq [0, \infty) \rbrace$ as $H^2(\mathbb{R})$, as in \cite{BHn97}.
\end{defn}
We now make note of a comprehensive result concerning frame bound estimation, which appeared in \cite{BenKing09} based on results in \cite{Dau92}, \cite{Chr02}, \cite{CS93b}, and \cite{Jing99} and may be viewed as a specific application of a result in \cite{RS97}.
\begin{thm}\label{thm:dau}\label{thm:par}
Let $\psi \in L^2( \mathbb{R}^d )$, and let $a>0$ be arbitrary. Define
\begin{eqnarray*}
M_{\psi} & = &  \operatorname{ess sup}_{a \leq \Vert \gamma \Vert \leq 2a} \sum_{k \in \mathbb{Z}^d} \sum_{n \in \mathbb{Z}} \left\vert \hat{\psi} \left(2^n \gamma \right) \hat{\psi} \left(2^n \gamma + k \right) \right\vert \textrm{ and}\\
N_{\psi} & = & \operatorname{ess inf}_{a \leq \Vert \gamma \Vert \leq 2a} \left[ \sum_{n \in \mathbb{Z}} \left\vert \hat{\psi} \left(2^n \gamma \right) \right\vert ^2 - \sum_{k \neq 0} \sum_{n \in \mathbb{Z}} \left\vert \hat{\psi} \left(2^n \gamma \right) \hat{\psi} \left(2^n \gamma + k \right) \right\vert \right] \mathrm{.}
\end{eqnarray*}
If $M_{\psi} < \infty$ and $N_{\psi} > 0$, then $\mathcal{W}\left(\psi\right)$ is a frame with frame bounds $A$ and $B$ satisfying the inequality $N_{\psi} \leq A \leq B \leq M_{\psi}$. Furthermore, if $\mathcal{W}(\psi)$ is a frame, then for 
\begin{eqnarray*}
\kappa_\psi (\gamma) & = & \sum_{n \in \mathbb{Z}} \left\vert \hat{\psi} \left( 2^n \gamma \right) \right\vert^2, \\
\overline{K}_\psi & = & \operatorname{ess sup}_{a \leq \Vert \gamma \Vert \leq 2a} \kappa_\psi (\gamma) \textrm{, and} \\
\underline{K}_\psi & = & \operatorname{ess inf}_{a \leq \Vert \gamma \Vert \leq 2a} \kappa_\psi (\gamma),
\end{eqnarray*}
the inequality $A \leq \underline{K}_\psi \leq \overline{K}_\psi \leq B$ holds.
\end{thm}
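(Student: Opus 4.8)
The plan is to move everything to the Fourier side and evaluate $\sum_{n,k}\vert\langle f,D_nT_k\psi\rangle\vert^2$ explicitly for $f$ in a convenient dense subspace of $L^2(\mathbb R^d)$, e.g.\ those $f$ with $\hat f\in L^\infty$ and $\supp\hat f$ a compact subset of $\widehat{\mathbb R}^d\setminus\{0\}$. For such $f$, Plancherel's theorem together with $\widehat{D_nT_k\psi}(\gamma)=2^{-nd/2}e^{-2\pi i\,2^{-n}k\cdot\gamma}\hat\psi(2^{-n}\gamma)$ shows, after the substitution $\gamma=2^n\xi$, that $\langle f,D_nT_k\psi\rangle$ equals $2^{nd/2}$ times a Fourier coefficient of the $\mathbb Z^d$-periodization of $\xi\mapsto\hat f(2^n\xi)\overline{\hat\psi(\xi)}$. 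Applying Parseval on $L^2([0,1]^d)$, summing first in $k$ and then in $n$, and expanding the modulus squared, I would obtain the identity
\begin{equation*}
\sum_{n\in\mathbb Z}\sum_{k\in\mathbb Z^d}\vert\langle f,D_nT_k\psi\rangle\vert^2=\int_{\widehat{\mathbb R}^d}\vert\hat f(\gamma)\vert^2\kappa_\psi(\gamma)\,d\gamma+\mathcal R(f),
\end{equation*}
where $\mathcal R(f)=\sum_{q\neq 0}\sum_{n}\int\hat f(\gamma)\,\overline{\hat f(\gamma+2^nq)}\,\overline{\hat\psi(2^{-n}\gamma)}\,\hat\psi(2^{-n}\gamma+q)\,d\gamma$ collects the off-diagonal part; the hypotheses on $\hat f$ make every interchange of sum and integral and every use of Fubini legitimate.

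For the first assertion, I would estimate $\vert\mathcal R(f)\vert$ by applying $\vert ab\vert\le\tfrac12(\vert a\vert^2+\vert b\vert^2)$ to $\hat f(\gamma)\overline{\hat f(\gamma+2^nq)}$ and then symmetrizing the two resulting pieces by the measure-preserving substitutions $\gamma\mapsto\gamma+2^nq$ and $n\mapsto -n$; this gives $\vert\mathcal R(f)\vert\le\int\vert\hat f(\gamma)\vert^2\bigl(\sum_{q\neq 0}\sum_n\vert\hat\psi(2^n\gamma)\,\hat\psi(2^n\gamma+q)\vert\bigr)\,d\gamma$. Inserting this into the identity yields, for every $f$ in the dense class, $\int\vert\hat f\vert^2\,\mathfrak n\le\sum_{n,k}\vert\langle f,D_nT_k\psi\rangle\vert^2\le\int\vert\hat f\vert^2\,\mathfrak m$, where $\mathfrak m(\gamma)=\sum_{q\in\mathbb Z^d}\sum_n\vert\hat\psi(2^n\gamma)\,\hat\psi(2^n\gamma+q)\vert$ and $\mathfrak n(\gamma)=\kappa_\psi(\gamma)-\sum_{q\neq 0}\sum_n\vert\hat\psi(2^n\gamma)\,\hat\psi(2^n\gamma+q)\vert$. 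Both $\mathfrak m$ and $\mathfrak n$ are invariant under $\gamma\mapsto 2\gamma$, so their essential supremum and essential infimum over the annulus $a\le\Vert\gamma\Vert\le 2a$ coincide with those over all of $\widehat{\mathbb R}^d$, namely $M_\psi$ and $N_\psi$. Hence $N_\psi\Vert f\Vert^2\le\sum_{n,k}\vert\langle f,D_nT_k\psi\rangle\vert^2\le M_\psi\Vert f\Vert^2$ on the dense class; the upper bound extends to all of $L^2(\mathbb R^d)$ because the analysis operator is then bounded (so $\mathcal W(\psi)$ is Bessel), and the lower bound extends by continuity of that operator. Thus $\mathcal W(\psi)$ is a frame with $N_\psi\le A\le B\le M_\psi$.

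For the ``furthermore'' part, assume $\mathcal W(\psi)$ is a frame with optimal bounds $A\le B$, so the displayed identity is available. Fix a point $\gamma_0$ in the annulus that is a Lebesgue point of $\kappa_\psi$ and test with $\hat f_\rho=\vert B(\gamma_0,\rho)\vert^{-1/2}\,\mathbbm{1}_{B(\gamma_0,\rho)}$, so $\Vert f_\rho\Vert=1$ and $\int\vert\hat f_\rho\vert^2\kappa_\psi\to\kappa_\psi(\gamma_0)$ as $\rho\to 0$. A term of $\mathcal R(f_\rho)$ can be nonzero only when $B(\gamma_0,\rho)$ meets $B(\gamma_0,\rho)-2^nq$, i.e.\ only for $n$ below a threshold tending to $-\infty$ with $\rho$, and for such $n$ the arguments $2^{-n}\gamma$ of $\hat\psi$ are pushed out to infinity, where the tail of $\hat\psi\in L^2(\widehat{\mathbb R}^d)$ is small; granting that this forces $\mathcal R(f_\rho)\to 0$, passing to the limit in $A=A\Vert f_\rho\Vert^2\le\sum_{n,k}\vert\langle f_\rho,D_nT_k\psi\rangle\vert^2\le B\Vert f_\rho\Vert^2=B$ gives $A\le\kappa_\psi(\gamma_0)\le B$. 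Since almost every point of the annulus is a Lebesgue point of $\kappa_\psi$, this yields $A\le\underline K_\psi\le\overline K_\psi\le B$.

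I expect the main obstacle to be exactly that last estimate: showing that the off-diagonal remainder $\mathcal R(f_\rho)$ is genuinely negligible in the limit. A naive bound on the $n\to-\infty$ tail is too lossy, and one has to exploit both the lattice structure of the $q$-sum and the decay of $\hat\psi$ at infinity, as in the references from which the theorem is drawn; the remaining ingredients — the periodization identity, the symmetrization of $\mathcal R$, the dilation invariance of $\mathfrak m$ and $\mathfrak n$, and the density extension — are routine.
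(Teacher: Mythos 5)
The paper itself does not prove this theorem: it is quoted from \cite{BenKing09} and attributed to \cite{Dau92}, \cite{Chr02}, \cite{CS93b}, \cite{Jing99} (and \cite{RS97}), so your attempt has to be measured against those classical arguments rather than a proof in the text. Your first half is correct and is essentially that standard route: the periodization identity for $\sum_{k}\vert\langle f,D_nT_k\psi\rangle\vert^2$, the estimate of the off-diagonal part (your $\vert ab\vert\le\tfrac12(\vert a\vert^2+\vert b\vert^2)$ followed by the substitutions $\gamma\mapsto\gamma+2^nq$ and $q\mapsto-q$ is a legitimate variant of the usual Cauchy--Schwarz splitting and produces the same majorant), the observation that your $\mathfrak m$ and $\mathfrak n$ are invariant under $\gamma\mapsto 2\gamma$ so their essential bounds over the annulus are $M_\psi$ and $N_\psi$, and the extension from the dense class by the Bessel property. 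The hypothesis $M_\psi<\infty$ is what legitimizes the interchanges there, and you have it.

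The genuine gap is in the ``furthermore'' half, exactly where you flagged it: nothing in the proposal proves $\mathcal R(f_\rho)\to 0$, and this cannot be run term by term, because in this half $\psi$ is only assumed to generate a frame, so $\sum_{q\neq 0}\vert\hat\psi(2^{-n}\gamma)\hat\psi(2^{-n}\gamma+q)\vert$ may diverge (an $L^2$ function need not be summable over translates of a lattice); ``the tail of $\hat\psi$ is small'' is not usable in that form. There is also a tacit assumption that a.e.\ point is a Lebesgue point of $\kappa_\psi$, which requires knowing $\kappa_\psi\in L^1_{loc}$ first. The repair, as in \cite{CS93b} and \cite{Jing99}, is structural rather than termwise. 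For $\kappa_\psi\le B$ no control of cross terms is needed: for scales with $2^n\ge 2\rho$ there are none, the remaining blocks $\sum_k\vert\langle f_\rho,D_nT_k\psi\rangle\vert^2$ are nonnegative and may simply be discarded, and Lebesgue differentiation applied to finite partial sums of $\kappa_\psi$ (which are locally integrable) followed by monotone convergence gives $\kappa_\psi\le B$ a.e.; this also makes the Lebesgue-point step legitimate. For $A\le\kappa_\psi$ you must bound the low-scale blocks from above, and the efficient way is to stay with the torus Parseval form instead of absolute values: for the scales $2^n\le 2\rho$,
\begin{equation*}
\sum_{k\in\mathbb{Z}^d}\vert\langle f_\rho,D_nT_k\psi\rangle\vert^2
= 2^{nd}\int_{[0,1]^d}\Big\vert\sum_{m\in\mathbb{Z}^d}\hat f_\rho\big(2^n(\xi+m)\big)\overline{\hat\psi(\xi+m)}\Big\vert^2 d\xi
\le C_d\int_{2^{-n}B(\gamma_0,\rho)}\vert\hat\psi(u)\vert^2\,du,
\end{equation*}
where the inequality is Cauchy--Schwarz over the at most $C_d(2^{-n}\rho)^d$ lattice translates of $[0,1]^d$ meeting $2^{-n}B(\gamma_0,\rho)$; that count exactly cancels the factor $2^{nd}$ and $\Vert\hat f_\rho\Vert_\infty^2=\vert B(\gamma_0,\rho)\vert^{-1}$, leaving a dimensional constant. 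Since for these scales the balls $2^{-n}B(\gamma_0,\rho)$ are pairwise disjoint (once $\rho<\Vert\gamma_0\Vert/3$) and lie in $\{\Vert u\Vert\ge(\Vert\gamma_0\Vert-\rho)/(2\rho)\}$, their total contribution is at most $C_d$ times the $L^2$ tail of $\hat\psi$, which vanishes as $\rho\to 0$; note this bounds the low-scale diagonal and cross terms simultaneously, so no separate treatment of $\mathcal R(f_\rho)$ is needed. With that estimate your limiting argument closes and yields $A\le\kappa_\psi(\gamma_0)\le B$ at a.e.\ $\gamma_0$ in the annulus; without it the proposal is incomplete at its decisive step.
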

Note that this Theorem implies that $\mathcal{W}(\psi)$ is a Parseval frame only if $\kappa_\psi(\gamma) = 1$ for almost all $\gamma$.
%%%%%%%%%%%%%%%%%%%%%%%%%%
\subsection{Outline and Results}
%%%%%%%%%%%%%%%%%%%%%%%%%%
In Section~\ref{sec:han1}, we present the results from \cite{BHn94} and \cite{BHn97} which concern the existence of smooth Parseval frames which approximate $1$-dimensional Parseval frame wavelet sets.  Bin Han's methods involve auxiliary smooth functions which we try to generalize to higher dimensions in Section~\ref{sec:fail}.  We show that forming tensor products or other similarly modified versions of the auxiliary functions from Section~\ref{sec:han1} either fails to yield a Parseval frame or fails to yield a smooth wavelet when used to smooth a certain type of wavelet set.  However, some Parseval wavelet set wavelets in $\widehat{\mathbb{R}}^d$ can be smoothed using methods inspired by Han's work, see Section \ref{sec:construct}.  In Section~\ref{sec:parti} we construct a class of $C_c^\infty$ functions which form frames with upper frame bounds converging to $1$.   We conclude with a summary in Section~\ref{section:con} and lengthy calculations in Section~\ref{section:app}.

%%%%%%%%%%%%%%%%%%%%%
\section{Schwartz class Parseval frames}
%%%%%%%%%%%%%%%%%%%%%%%%%%%%
\subsection{Parseval frames for $L^2(\mathbb{R})$} \label{sec:han1}
%%%%%%%%%%%%%%%%%%%%%%%%%%%%
In his Master's thesis, \cite{BHn94}, as well as the paper \cite{BHn97}, Bin Han proved the existence of $C^\infty$ Parseval frames for $H^2(\mathbb{R})$. The following definition and two lemmas appear in the paper \cite{BHn97}.
\begin{defn}
For any measurable subset $L \subseteq \widehat{\mathbb{R}}^d$ define
\begin{equation*}
\Delta(L) = \operatorname{dist}\big(L, \bigcup_{k \in \mathbb{Z}^d \backslash \{0\}}(L + k)\big).
\end{equation*}
\end{defn} 
\begin{lem}\label{lem:theta}
There exists a function $\theta \in C^\infty(\mathbb{R})$ satisfying $\theta(x) = 0$ when $x \leq -1$ and $\theta(x) = 1$ when $x \geq 1$ and
\begin{equation*}
\theta(x)^2 + \theta(-x)^2 = 1, \qquad x \in \mathbb{R}.
\end{equation*}
\end{lem}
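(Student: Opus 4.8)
The plan is to build $\theta$ by composing cosine with a smooth ``quarter-turn'' function, thereby reducing the quadratic identity $\theta(x)^2 + \theta(-x)^2 = 1$ to an additive one. First I would fix a nonnegative even function $\phi \in C_c^\infty(\mathbb{R})$ with $\supp \phi \subseteq [-1,1]$ and $\int_{\mathbb{R}} \phi(t)\,dt = \pi/2$; such a $\phi$ is obtained by rescaling and renormalizing a standard mollifier. Define
\begin{equation*}
g(x) = \int_x^\infty \phi(t)\,dt .
\end{equation*}
Then $g' = -\phi \in C^\infty(\mathbb{R})$, so $g \in C^\infty(\mathbb{R})$; moreover $g(x) = \pi/2$ for $x \le -1$ (the entire mass of $\phi$ lies to the right of $x$) and $g(x) = 0$ for $x \ge 1$ (since $\supp \phi \subseteq [-1,1]$). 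The crucial point is that evenness of $\phi$ together with the normalization forces
\begin{equation*}
g(x) + g(-x) = \int_x^\infty \phi(t)\,dt + \int_{-\infty}^x \phi(t)\,dt = \int_{\mathbb{R}} \phi(t)\,dt = \tfrac{\pi}{2}
\end{equation*}
for every $x \in \mathbb{R}$, where the middle equality uses the substitution $t \mapsto -t$ in $\int_{-x}^\infty \phi$ and $\phi(-t) = \phi(t)$.

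Next I would set $\theta(x) = \cos(g(x))$. Smoothness of $\theta$ is immediate, and $\theta(x) = \cos 0 = 1$ for $x \ge 1$ while $\theta(x) = \cos(\pi/2) = 0$ for $x \le -1$. For the quadratic identity, the relation $g(-x) = \tfrac{\pi}{2} - g(x)$ gives
\begin{equation*}
\theta(x)^2 + \theta(-x)^2 = \cos^2 g(x) + \cos^2\!\big(\tfrac{\pi}{2} - g(x)\big) = \cos^2 g(x) + \sin^2 g(x) = 1 ,
\end{equation*}
which completes the verification.

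There is no genuine obstacle here: once $\phi$ is chosen, smoothness of $g$ (hence of $\theta$) follows at once from $g' = -\phi \in C^\infty$, and the boundary behaviour is a direct consequence of $\supp \phi \subseteq [-1,1]$. The only point meriting a moment's care is the normalization $\int_{\mathbb{R}}\phi = \pi/2$, chosen precisely so that $g$ takes values in $[0,\pi/2]$, the interval on which $x \mapsto \cos^2 x$ and $x \mapsto \sin^2 x = \cos^2(\tfrac{\pi}{2}-x)$ are complementary; this is what converts the additive identity $g(x)+g(-x) = \tfrac{\pi}{2}$ into the desired quadratic one. As an alternative construction I would note that $\theta(x) = \sin\big(\tfrac{\pi}{4}(1+\beta(x))\big)$ works for any smooth odd $\beta$ that increases from $-1$ to $1$ on $[-1,1]$ and is locally constant outside, but I would present the argument via $g$ since it isolates the functional equation most transparently.
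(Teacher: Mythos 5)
Your construction is correct and complete: the identity $g(x)+g(-x)=\pi/2$ follows exactly as you say from the evenness of $\phi$, and composing with cosine converts it into the required quadratic identity, while the support condition on $\phi$ gives the correct constant values of $\theta$ outside $[-1,1]$. The paper itself does not reproduce a proof of this lemma (it is quoted from Han's paper \cite{BHn97}), but your argument is essentially the standard Meyer-type bell construction used there --- writing $\theta$ as a trigonometric function of a smooth ramp so that $\cos^2+\sin^2=1$ does the work --- so there is nothing to flag.
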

\begin{defn} \label{defn:han}
Given a closed interval $I = [a,b]$ and two positive numbers $\delta_1$, $\delta_2$ such that $\delta_1 + \delta_2 \leq b-a$, we define
\begin{equation*}
f_{(I;\delta_1,\delta_2)}(x) = \left\{ \begin{array}{lr}
\theta\left(\frac{x-a}{\delta_1}\right) & \textrm{when $x < a + \delta_1$} \\
1 & \textrm{when $a + \delta_1 \leq x \leq b - \delta_2$} \\
\theta\left(\frac{b-x}{\delta_2}\right) & \textrm{when $x > b - \delta_2$}
\end{array}\right.
\end{equation*}
Note that $\supp(f_{(I;\delta_1,\delta_2)}) \subseteq [a - \delta_1, b + \delta_2]$.
\end{defn}
\begin{lem}\label{lem:han}
For any positive numbers $\delta_1$, $\delta_2$, $\delta_3$ and $0 < a < b< c$,
\begin{equation*}
f_{(I;\delta_1,\delta_2)}(2^{n}x) = f_{(2^{-k}I;2^{-k}\delta_1,2^{-k}\delta_2)}(x)
\end{equation*}
and
\begin{equation*}
f^2_{([a,b];\delta_1,\delta_2)}(x) + f^2_{([b,c];\delta_2,\delta_3)}(x) = f^2_{([a,c];\delta_1,\delta_3)}(x).
\end{equation*}
\end{lem}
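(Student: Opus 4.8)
The plan is to prove both identities by unwinding Definition~\ref{defn:han} and appealing to the single functional equation $\theta(u)^2 + \theta(-u)^2 = 1$ of Lemma~\ref{lem:theta}; no deeper input is needed. (I read the first identity with the two exponents equal, i.e. $n = k$, since as printed it contains a free index.) For the dilation identity, substitute $y = 2^{k}x$ into the three branches of $f_{([a,b];\delta_1,\delta_2)}$. The condition $y < a+\delta_1$ becomes $x < 2^{-k}a + 2^{-k}\delta_1$, which is exactly the first branch condition for $f_{(2^{-k}[a,b];\,2^{-k}\delta_1,\,2^{-k}\delta_2)}$ because $2^{-k}[a,b] = [2^{-k}a,\,2^{-k}b]$; and on that branch $\theta\!\left(\frac{y-a}{\delta_1}\right) = \theta\!\left(\frac{2^{k}x-a}{\delta_1}\right) = \theta\!\left(\frac{x - 2^{-k}a}{2^{-k}\delta_1}\right)$, the corresponding value. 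The plateau branch $a+\delta_1 \le y \le b-\delta_2$ becomes $2^{-k}a + 2^{-k}\delta_1 \le x \le 2^{-k}b - 2^{-k}\delta_2$, again matching, and the falling branch is identical. This part is purely mechanical.

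For the quadratic identity, write $g_1 = f_{([a,b];\delta_1,\delta_2)}$, $g_2 = f_{([b,c];\delta_2,\delta_3)}$, $g_3 = f_{([a,c];\delta_1,\delta_3)}$. These are all well defined under the standing size assumptions $\delta_1 + \delta_2 \le b-a$ and $\delta_2 + \delta_3 \le c-b$ implicit in Definition~\ref{defn:han}, which also yield $\delta_1 + \delta_3 \le c-a$ and the ordering of breakpoints $a-\delta_1 < a+\delta_1 \le b-\delta_2 < b+\delta_2 \le c-\delta_3 < c+\delta_3$. I would then verify the identity on each of the seven subintervals of $\mathbb{R}$ cut out by these breakpoints. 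Outside $[a-\delta_1,\,c+\delta_3]$ all three functions vanish. On $[a-\delta_1,\,a+\delta_1]$ the hypothesis $\delta_1+\delta_2 \le b-a$ forces $g_2 \equiv 0$, while $g_1 = g_3 = \theta\!\left(\frac{x-a}{\delta_1}\right)$, and symmetrically $g_1 \equiv 0$, $g_2 = g_3 = \theta\!\left(\frac{c-x}{\delta_3}\right)$ on $[c-\delta_3,\,c+\delta_3]$. On $[a+\delta_1,\,b-\delta_2]$ one checks $g_1 \equiv 1$, $g_2 \equiv 0$, $g_3 \equiv 1$; on $[b+\delta_2,\,c-\delta_3]$ one checks $g_1 \equiv 0$, $g_2 \equiv 1$, $g_3 \equiv 1$. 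In each of these cases the claimed identity is immediate.

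The only interval where Lemma~\ref{lem:theta} does real work is the central one, $[b-\delta_2,\,b+\delta_2]$. There the breakpoint ordering shows that this interval lies inside the plateau $[a+\delta_1,\,c-\delta_3]$ of $g_3$, so $g_3 \equiv 1$; meanwhile $g_1 = \theta\!\left(\frac{b-x}{\delta_2}\right)$ sits in its falling transition and $g_2 = \theta\!\left(\frac{x-b}{\delta_2}\right)$ in its rising transition, so with $u = (x-b)/\delta_2$ we obtain $g_1(x)^2 + g_2(x)^2 = \theta(-u)^2 + \theta(u)^2 = 1 = g_3(x)^2$ by Lemma~\ref{lem:theta}. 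The main thing requiring care is therefore the bookkeeping: confirming that the breakpoints really are ordered as stated and that on each subinterval the intended branch of each $g_i$ is the active one. This is precisely where the size hypotheses on $\delta_1,\delta_2,\delta_3$ are used, and it is the only place where anything could go wrong; I do not anticipate a genuine obstacle.
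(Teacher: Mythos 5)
Your proof is correct. The paper itself offers no proof of this lemma --- it is quoted directly from Han's paper \cite{BHn97} --- so there is nothing to compare against; your direct verification, splitting $\mathbb{R}$ at the breakpoints $a-\delta_1 < a+\delta_1 \le b-\delta_2 < b+\delta_2 \le c-\delta_3 < c+\delta_3$ and invoking $\theta(u)^2+\theta(-u)^2=1$ only on the central interval $[b-\delta_2,\,b+\delta_2]$ where $g_3\equiv 1$, is exactly the standard argument, and your readings of the index typo ($n=k$) and of the implicit size hypotheses $\delta_1+\delta_2\le b-a$, $\delta_2+\delta_3\le c-b$ are the right ones.
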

The preceding lemmas are used to prove
\begin{prop}[\cite{BHn97}] 
Suppose that a family of disjoint closed intervals $I_i = [a_i, b_i]$, $1 \leq i \leq l$ in $(0, \infty)$ is arranged in a decreasing order, i.e., $0 < b_l < b_{l-1} < \ldots < b_1$ and $\cup_{i=1}^l I_i$ is a Parseval frame wavelet set for $H^2(\mathbb{R})$. If $\Delta(\cup_{i=1}^l I_i) > 0$.  Then for any
\begin{equation*}
0 < \delta < \frac{1}{2}\min\lbrace \Delta(\cup_{i=1}^l I_i), \min_{1 \leq i \leq l} \lbrace b_i - a_i \rbrace, \min_{1 \leq i < l} \operatorname{dist}(I_i, I_{i+1}) \rbrace,
\end{equation*}
let
\begin{equation*}
\hat{\psi}_\delta = f_{(I_1;\frac{\delta}{2},\delta)} + \sum_{i=2}^l f_{(I_i;2^{-k_i - 1}\delta, 2^{-k_i - 1}\delta)}
\end{equation*}
where $k_i$ is the unique non-negative integer such that $2^{k_i}I_i \subseteq [\frac{1}{2}b_1,b_1]$. We have $\psi_\delta \in \mathscr{S}(\mathbb{R})$ and $\mathcal{W}(\psi_\delta)$ is a Parseval frame in $H^2(\mathbb{R})$.
\end{prop}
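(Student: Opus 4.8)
The plan is to check three things and then invoke the characterization of Parseval frame wavelets inside $H^2(\mathbb{R})$: $(i)$ that $\psi_\delta$ is Schwartz and lies in $H^2(\mathbb{R})$; $(ii)$ that the Calder\'on sum $\kappa_{\psi_\delta}(\gamma)=\sum_{n\in\mathbb{Z}}\lvert\hat\psi_\delta(2^n\gamma)\rvert^2$ equals $1$ for a.e. $\gamma>0$; and $(iii)$ that the translation periodizations $\sum_n\hat\psi_\delta(2^n\gamma)\overline{\hat\psi_\delta(2^n\gamma+q)}$ vanish for every odd $q$.

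For $(i)$, note that $\hat\psi_\delta$ is a finite sum of functions $f_{(I;\delta_1,\delta_2)}$, each of which is $C^\infty$ by Lemma~\ref{lem:theta} and, by Definition~\ref{defn:han}, supported in a compact interval only slightly larger than $I$. Since $\bigcup_i I_i$ is a Parseval frame wavelet set for $H^2(\mathbb{R})$, the family $\{2^nI_i:n\in\mathbb{Z},\,1\le i\le l\}$ partitions $(0,\infty)$ up to a null set; in particular $I_i$ and $2^{-1}I_i$ are a.e.\ disjoint, so $b_i\le 2a_i$ for every $i$, which together with the stated upper bound on $\delta$ forces the enlarged support intervals to be pairwise disjoint and contained in $(0,\infty)$. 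Hence $\hat\psi_\delta\in C_c^\infty(\widehat{\mathbb{R}})$ is supported in $(0,\infty)$ and $\psi_\delta\in\mathscr{S}(\mathbb{R})\cap H^2(\mathbb{R})$.

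For $(ii)$ I would first extract the structure forced by the hypothesis. Since the pieces $2^nI_i$ are a.e.\ pairwise disjoint, no piece straddles a point of the form $2^mb_1$, so each lies in a single dyadic block $[2^mb_1,2^{m+1}b_1]$; consequently, for each $i$ there is a unique $k_i\ge 0$ with $2^{k_i}I_i\subseteq[\tfrac12 b_1,b_1]$ (with $k_1=0$ since $a_1\ge\tfrac12 b_1$), and $\{2^{k_i}I_i:1\le i\le l\}$ partitions the fundamental domain $D:=[\tfrac12 b_1,b_1)$ for dilation by $2$. Because $\kappa_{\psi_\delta}(2\gamma)=\kappa_{\psi_\delta}(\gamma)$, it suffices to prove $\kappa_{\psi_\delta}\equiv 1$ on $D$. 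Writing $(\epsilon_i,\epsilon_i')$ for the smoothing parameters attached to $I_i$, the dilation identity of Lemma~\ref{lem:han} gives $\hat\psi_\delta(2^n\gamma)=\sum_i f_{(2^{-n}I_i;\,2^{-n}\epsilon_i,\,2^{-n}\epsilon_i')}(\gamma)$, and disjointness of the supports at a fixed scale yields $\kappa_{\psi_\delta}(\gamma)=\sum_i\sum_n f^2_{(2^{-n}I_i;\,2^{-n}\epsilon_i,\,2^{-n}\epsilon_i')}(\gamma)$. For $\gamma\in D$ only the scales $n\in\{-k_i-1,-k_i,-k_i+1\}$ can contribute, and the nonzero terms are precisely the smoothed indicators of the tiles $2^{k_i}I_i$ of $D$, with smoothing parameter $\delta/2$ at every interior junction of the tiling, together with two ``wrap-around'' bumps produced at the seam $\tfrac12 b_1\sim b_1$ by the scale-$(\mp 1)$ copies of the extreme tiles. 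The asymmetric choice $(\epsilon_1,\epsilon_1')=(\delta/2,\delta)$ for $I_1$ is exactly what makes the right bump of $I_1$ at $b_1$ agree, after the factor-of-$2$ dilation across the seam, with the left bump of the leftmost tile (at $\tfrac12 b_1$ the match is automatic, as $\tfrac12\epsilon_1'=\delta/2$). At each junction the two overlapping squared bumps sum to $1$ by the identity $\theta(x)^2+\theta(-x)^2=1$ of Lemma~\ref{lem:theta} after rescaling, while the identity $f^2_{([a,b];\cdot,\cdot)}+f^2_{([b,c];\cdot,\cdot)}=f^2_{([a,c];\cdot,\cdot)}$ of Lemma~\ref{lem:han} carries the value $1$ across the flat interiors of the tiles; hence $\kappa_{\psi_\delta}\equiv 1$ on $D$, and so on $(0,\infty)$.

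For $(iii)$, since $\supp\hat\psi_\delta$ lies within distance $\delta$ of $\bigcup_i I_i$ and $\delta<\tfrac12\Delta(\bigcup_i I_i)$, the set $\supp\hat\psi_\delta$ is disjoint from $\supp\hat\psi_\delta+q$ for every $q\in\mathbb{Z}\setminus\{0\}$; thus $\hat\psi_\delta(\gamma)\overline{\hat\psi_\delta(\gamma+q)}=0$ a.e.\ and all the translation periodizations vanish identically. Feeding $\kappa_{\psi_\delta}\equiv 1$ on $(0,\infty)$ and the vanishing of these cross terms into the standard expansion of $\sum_{n,k}\lvert\langle f,D_nT_k\psi_\delta\rangle\rvert^2$ for $f\in H^2(\mathbb{R})$ (the same computation underlying Theorem~\ref{thm:par}, restricted to $H^2$) produces $\lVert f\rVert^2$, so $\mathcal{W}(\psi_\delta)$ is a Parseval frame for $H^2(\mathbb{R})$. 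The delicate point is the bookkeeping in $(ii)$: confirming that for $\gamma\in D$ exactly the anticipated bumps are nonzero and that the seam at $b_1$ closes up correctly, which is precisely where the asymmetric smoothing of $I_1$ and the exact upper bound on $\delta$ (keeping bump supports from any unintended overlap) are used.
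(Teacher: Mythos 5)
Your proposal is correct and follows essentially the intended route: the paper itself does not prove this proposition (it is quoted from Han's paper \cite{BHn97}, with the remark that Lemmas~\ref{lem:theta} and~\ref{lem:han} are what the proof uses), and your argument is exactly that lemma-based one — reduce to the Calder\'on sum on the fundamental domain $[\tfrac12 b_1,b_1)$, match the $\theta$-ramps at interior junctions and across the dyadic seam (which is where the asymmetric parameters $(\tfrac{\delta}{2},\delta)$ for $I_1$ and the scaling $2^{-k_i-1}\delta$ enter), and kill the translation terms via $\delta<\tfrac12\Delta(\cup_i I_i)$. The bookkeeping you flag as delicate (each $I_i$ sitting inside a single dyadic block, and only the scales $n\in\{-k_i-1,-k_i,-k_i+1\}$ reaching the fundamental domain under the stated bound on $\delta$) does check out, so there is no gap beyond routine verification.
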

A similar proposition holds for $L^2(\mathbb{R})$.  A proof may be found in \cite{Dissert}.
\begin{prop}  \label{prop:BH}
Suppose that a family of disjoint closed intervals $I_i = [a_i, b_i]$, $1 \leq i \leq l$ in $\widehat{\mathbb{R}}$ is arranged in a decreasing order, i.e., $b_l < b_{l-1} < \ldots < b_1$ where $b_j < 0 < a_{j-1}$ and $\cup_{i=1}^l I_i$ is a Parseval frame wavelet set for $L^2(\mathbb{R})$. If $\Delta(\cup_{i=1}^l I_i) > 0$, then for any
\begin{equation*}
0 < \delta < \frac{1}{2}\min\lbrace \Delta(\cup_{i=1}^l I_i), \min_{1 \leq i \leq l} \lbrace b_i - a_i \rbrace, \min_{1 \leq i < l} \operatorname{dist}(I_i, I_{i+1}) \rbrace,
\end{equation*}
let
\begin{equation*}
\hat{\psi}_\delta = f_{(I_1;\frac{\delta}{2},\delta)} + \left[\sum_{i=2}^{l-1} f_{(I_i;2^{-k_i - 1}\delta, 2^{-k_i - 1}\delta)}\right] + f_{(I_l; \delta, \frac{\delta}{2})}
\end{equation*}
where for $2 \leq i \leq j-1$, $k_i$ is the unique non-negative integer such that $2^{k_i}I_i \subseteq [\frac{1}{2}b_1,b_1]$ and for $j \leq i \leq l-1$, $k_i$ is the unique non-negative integer such that $2^{k_i}I_i \subseteq [a_l, \frac{1}{2}a_l]$. We have $\psi_\delta \in \mathscr{S}(\mathbb{R})$ and $\mathcal{W}(\psi_\delta)$ is a Parseval frame in $L^2(\mathbb{R})$.
\end{prop}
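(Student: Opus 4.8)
The plan is to run Han's argument from the preceding $H^2(\mathbb{R})$ proposition twice, once on the positive-frequency part $\bigcup_{i=1}^{j-1} I_i \subseteq (0,\infty)$ and once on the negative-frequency part $\bigcup_{i=j}^{l} I_i \subseteq (-\infty,0)$. The smoothness claim is immediate: $\hat\psi_\delta$ is a finite sum of functions $f_{(I;\delta_1,\delta_2)}$, each of which (Definition~\ref{defn:han}, Lemma~\ref{lem:theta}) is a $C^\infty$ function supported in a bounded interval, so $\hat\psi_\delta \in C_c^\infty(\widehat{\mathbb{R}}) \subseteq \mathscr{S}(\widehat{\mathbb{R}})$ and hence $\psi_\delta = \mathcal{F}^{-1}\hat\psi_\delta \in \mathscr{S}(\mathbb{R})$.

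Next I would extract the geometric consequences of the bound on $\delta$. Each of the two smoothing parameters attached to $I_i$ lies in $\{\delta/2,\,\delta,\,2^{-k_i-1}\delta\}$ and is therefore at most $\delta$, so $\supp f_{(I_i;\,\cdot\,)} \subseteq [a_i - \delta, b_i + \delta]$ (with the actual enlargements smaller); since $\delta < \tfrac12\min_{i}\operatorname{dist}(I_i,I_{i+1})$ these enlarged intervals are pairwise disjoint, and a short computation using $\delta < \tfrac12\min_i(b_i-a_i)$ shows that none of them meets $0$ (so the positive and negative halves of the construction do not interact). Hence $|\hat\psi_\delta|^2 = \sum_{i=1}^l f^2_{(I_i;\,\cdot\,)}$, $\supp\hat\psi_\delta \subseteq \mathcal{O}_\delta$ for some set $\mathcal{O}_\delta$ contained in the $\delta$-neighborhood of $\bigcup_i I_i$, and because $\Delta(\bigcup_i I_i) > 2\delta$ the translates $\{\mathcal{O}_\delta + m\}_{m \in \mathbb{Z}}$ are pairwise disjoint.

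These facts reduce everything to a Calder\'on-type identity via Theorem~\ref{thm:par}. For any $n \in \mathbb{Z}$ and $k \in \mathbb{Z}\setminus\{0\}$, a nonzero value of $\hat\psi_\delta(2^n\gamma)\,\hat\psi_\delta(2^n\gamma + k)$ would put $2^n\gamma$ in $\mathcal{O}_\delta \cap(\mathcal{O}_\delta - k) = \emptyset$; thus every $k \ne 0$ term in $M_{\psi_\delta}$ and $N_{\psi_\delta}$ vanishes, leaving $M_{\psi_\delta} = \overline{K}_{\psi_\delta}$ and $N_{\psi_\delta} = \underline{K}_{\psi_\delta}$. So it suffices to show $\kappa_{\psi_\delta}(\gamma) = \sum_{n\in\mathbb{Z}}|\hat\psi_\delta(2^n\gamma)|^2 = 1$ for a.e.\ $\gamma$: granting this, $\overline K_{\psi_\delta} = \underline K_{\psi_\delta} = 1$, so $M_{\psi_\delta} = N_{\psi_\delta} = 1$, and Theorem~\ref{thm:par} gives that $\mathcal{W}(\psi_\delta)$ is a frame with $1 \le A \le B \le 1$, i.e.\ a Parseval frame.

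The remaining identity $\kappa_{\psi_\delta}\equiv 1$ is the main obstacle and the only place the detailed structure of the construction is used. Since dilation by $2^n$ preserves sign and $\bigcup_i I_i$ is a Parseval frame wavelet set, the remark after Theorem~\ref{thm:par} gives $\sum_n \mathbbm{1}_{\bigcup_iI_i}(2^n\gamma) = 1$ a.e., so $\{2^nI_i : i < j\}$ tiles $(0,\infty)$ and $\{2^nI_i : i \ge j\}$ tiles $(-\infty,0)$; together with the ``no enlarged interval meets $0$'' observation this lets us verify $\kappa_{\psi_\delta}\equiv 1$ on each half separately. On $(0,\infty)$ this is exactly the computation behind the preceding proposition: by the scaling identity $f_{(I;\delta_1,\delta_2)}(2^k\,\cdot\,) = f_{(2^{-k}I;2^{-k}\delta_1,2^{-k}\delta_2)}$ and the splitting identity $f^2_{([a,b];\delta_1,\delta_2)} + f^2_{([b,c];\delta_2,\delta_3)} = f^2_{([a,c];\delta_1,\delta_3)}$ of Lemma~\ref{lem:han} --- which at an interior seam is just $\theta(t)^2 + \theta(-t)^2 = 1$ --- the dilates $f^2_{(I_i;\,\cdot\,)}(2^n\gamma)$ are merged pairwise across the shared endpoints of the dyadic tiling, collapsing the whole sum to the constant $1$. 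The assignment of the asymmetric pair $(\delta/2,\delta)$ to the extreme interval $I_1$ and of $2^{-k_i-1}\delta$ to each interior $I_i$ with $2^{k_i}I_i \subseteq [\tfrac12 b_1, b_1]$ is dictated precisely by the requirement that the ramp widths agree at every seam. On $(-\infty,0)$ the identical bookkeeping applies with the reference cell $[a_l,\tfrac12 a_l]$ replacing $[\tfrac12 b_1,b_1]$ and with the left and right ramps interchanged, which is why $I_l$ is assigned $(\delta,\delta/2)$; a fully detailed version of the one-sided computation appears in \cite{Dissert}.
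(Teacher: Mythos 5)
Your proposal is correct and follows essentially the route the paper intends: the paper itself defers the proof to \cite{Dissert} and to Han's $H^2(\mathbb{R})$ argument, and your plan --- kill the $k\neq 0$ terms in Theorem~\ref{thm:par} via $\Delta(\cup_i I_i)>2\delta$ and disjointness of the $\delta$-enlarged supports, then verify $\kappa_{\psi_\delta}\equiv 1$ on each half-line separately by the scaling and splitting identities of Lemma~\ref{lem:han} with ramp widths matched at the seams of the dyadic tiling of $[\tfrac12 b_1,b_1]$ and $[a_l,\tfrac12 a_l]$ --- is exactly that argument run on both the positive and negative frequency halves. The only gloss worth tightening is the ``supports avoid $0$'' step, which also uses that each $I_i$ lies within a single dyadic octave (guaranteed by the existence of the $k_i$ and the a.e.\ disjointness of dyadic dilates), not the bound $\delta<\tfrac12\min_i(b_i-a_i)$ alone.
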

The following theorem from \cite{BenKing09} was used to show that shrinking the frequency support of smoothed Parseval frame wavelets (which were not always even frames, let along Parseval frames) tightens the frame bounds.  The shrinking of the frequency support is related to increasing the sampling of the continuous wavelet system.
\begin{thm} \label{thm:shrink}
Let $\hat{\psi} \in L_c^\infty (\widehat{\mathbb{R}}^d)$ be a non-negative function.  If there exists an $\epsilon > 0$ such that for $L = \supp_\epsilon \hat{\psi}$, $\bigcup_{n\in\mathbb{Z}} 2^n L = \widehat{\mathbb{R}}^d$ up to a set of measure $0$, and for $\tilde{L} = \supp \hat{\psi}$, $\Delta(\tilde{L}) > 0$, and $\operatorname{dist} (0,\tilde{L}) >0$. Then, $\mathcal{W}(\psi)$ is a frame for $L^2(\mathbb{R}^d)$. The frame bounds are $\operatorname{ess inf}_\gamma \kappa_\psi (\gamma)$ and $\operatorname{ess sup}_\gamma \kappa_\psi (\gamma)$.
\end{thm}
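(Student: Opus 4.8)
The plan is to apply Theorem~\ref{thm:par} directly: I will check that $M_\psi < \infty$ and $N_\psi > 0$, and at the same time compute $M_\psi$ and $N_\psi$ to be exactly the two claimed bounds. Throughout, fix measurable representatives: $L$ of $\supp_\epsilon \hat\psi$ and $\tilde L$ of $\supp \hat\psi$, the latter chosen so that $\Delta(\tilde L) > 0$. Since $\hat\psi \in L_c^\infty$, the set $\tilde L$ is, up to measure zero, bounded, and $\operatorname{dist}(0,\tilde L) > 0$ makes it bounded away from the origin, so $\tilde L \subseteq \lbrace \gamma : c_1 \leq \Vert \gamma \Vert \leq c_2 \rbrace$ for some $0 < c_1 \leq c_2 < \infty$; also, up to measure zero, $\lbrace \hat\psi \neq 0 \rbrace \subseteq \tilde L$, and $\vert \hat\psi \vert \geq \epsilon$ holds a.e.\ on $L$ (this follows from the minimality clause in the definition of $\supp_\epsilon$).

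The first observation is that $\Delta(\tilde L) > 0$ kills every off-diagonal term. Indeed $\Delta(\tilde L) > 0$ forces $\tilde L \cap (\tilde L + k) = \emptyset$ for every $k \in \mathbb{Z}^d \setminus \lbrace 0 \rbrace$, so for a.e.\ $\gamma$, whenever $\hat\psi(2^n \gamma) \neq 0$ we have $2^n \gamma \in \tilde L$, hence $2^n \gamma + k \notin \tilde L$, hence $\hat\psi(2^n \gamma + k) = 0$. Therefore $\sum_{k \neq 0}\sum_{n \in \mathbb{Z}} \vert \hat\psi(2^n\gamma)\hat\psi(2^n\gamma+k)\vert = 0$ for a.e.\ $\gamma$, which gives
\begin{equation*}
M_\psi = \operatorname{ess sup}_{a \leq \Vert\gamma\Vert \leq 2a} \kappa_\psi(\gamma), \qquad N_\psi = \operatorname{ess inf}_{a \leq \Vert\gamma\Vert \leq 2a} \kappa_\psi(\gamma).
\end{equation*}
Since $\kappa_\psi(2\gamma) = \kappa_\psi(\gamma)$ (reindex the sum), every nonzero $\gamma$ can be dyadically scaled into the annulus, so these annular essential extrema equal $\operatorname{ess sup}_\gamma \kappa_\psi(\gamma)$ and $\operatorname{ess inf}_\gamma \kappa_\psi(\gamma)$, respectively; in the notation of Theorem~\ref{thm:par} this is $M_\psi = \overline{K}_\psi$ and $N_\psi = \underline{K}_\psi$.

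Next I verify the hypotheses of Theorem~\ref{thm:par}. For $M_\psi < \infty$: for a fixed $\gamma \neq 0$ the requirement $2^n\gamma \in \tilde L \subseteq \lbrace \gamma : c_1 \leq \Vert\gamma\Vert \leq c_2\rbrace$ confines $n$ to an interval of integers of length at most $\log_2(c_2/c_1)$, so at most $N_0 := \lceil \log_2(c_2/c_1)\rceil + 1$ of the summands in $\kappa_\psi(\gamma)$ are nonzero, each at most $\Vert\hat\psi\Vert_\infty^2$; hence $\kappa_\psi(\gamma) \leq N_0 \Vert\hat\psi\Vert_\infty^2$ and $M_\psi < \infty$. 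For $N_\psi > 0$: the covering hypothesis $\bigcup_{n \in \mathbb{Z}} 2^n L = \widehat{\mathbb{R}}^d$ up to measure zero means that for a.e.\ $\gamma$ there is an $n$ with $2^n\gamma \in L$, and then $\kappa_\psi(\gamma) \geq \vert\hat\psi(2^n\gamma)\vert^2 \geq \epsilon^2$; thus $N_\psi = \operatorname{ess inf}_\gamma \kappa_\psi(\gamma) \geq \epsilon^2 > 0$.

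Finally, Theorem~\ref{thm:par} now applies: $\mathcal{W}(\psi)$ is a frame with optimal bounds $A \leq B$ obeying $N_\psi \leq A \leq B \leq M_\psi$, and its second conclusion gives $A \leq \underline{K}_\psi \leq \overline{K}_\psi \leq B$. Combining these with $\underline{K}_\psi = N_\psi$ and $\overline{K}_\psi = M_\psi$ pinches the inequalities to $A = N_\psi = \operatorname{ess inf}_\gamma \kappa_\psi(\gamma)$ and $B = M_\psi = \operatorname{ess sup}_\gamma \kappa_\psi(\gamma)$, which is the assertion. I expect the only delicate work to be purely measure-theoretic: passing between the support equivalence classes and honest representatives, and making the vanishing of the off-diagonal terms precise on a full-measure set; note that the stated non-negativity of $\hat\psi$ plays no real role here beyond letting absolute values be dropped.
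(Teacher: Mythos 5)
Your derivation is correct, but there is nothing in the paper to compare it against: Theorem~\ref{thm:shrink} is imported from \cite{BenKing09} and stated without proof, so the assessment below is of your argument on its own terms. Judged that way, it is complete and is the natural route. The key points all check out: $\Delta(\tilde L)>0$ forces $\tilde L\cap(\tilde L+k)=\emptyset$ for $k\neq 0$, so (after discarding the countable union over $(n,k)$ of null sets coming from choosing representatives of $\supp\hat\psi$) every off-diagonal term vanishes a.e., giving $M_\psi=\overline K_\psi$ and $N_\psi=\underline K_\psi$; the dyadic dilation invariance $\kappa_\psi(2\gamma)=\kappa_\psi(\gamma)$ converts the annular essential extrema into the global ones appearing in the statement; $\hat\psi\in L_c^\infty$ with $\operatorname{dist}(0,\tilde L)>0$ confines the nonzero summands of $\kappa_\psi(\gamma)$ to boundedly many dyadic levels, so $M_\psi<\infty$; your reading of the minimality clause in the definition of $\supp_\epsilon$ (that $|\hat\psi|\geq\epsilon$ a.e.\ on $L$) is the right one, and together with the covering hypothesis it yields $N_\psi\geq\epsilon^2>0$. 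The step that actually delivers the exact frame bounds, rather than the usual two-sided estimate, is the pinching of the two halves of Theorem~\ref{thm:par} against each other ($N_\psi\leq A\leq\underline K_\psi=N_\psi$ and $\overline K_\psi\leq B\leq M_\psi=\overline K_\psi$), and you carry that out correctly; your closing remark that the non-negativity of $\hat\psi$ is not genuinely used is also accurate.
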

Using this theorem, we may modify any bounded Parseval frame wavelet set in $\widehat{\mathbb{R}}$ so that we may apply Proposition \ref{prop:BH} in order to obtain a smooth Parseval frame wavelet set.
\begin{cor}
Let $L \subseteq \widehat{\mathbb{R}}$ be a Parseval frame wavelet set. Let $N \in \mathbb{Z}$ have the trait that $\overline{2^N L} \subseteq (-\frac{1}{2}, \frac{1}{2} )$. Then there exists a $\psi \in \mathscr{S}(\mathbb{R})$ such that $\hat{\psi} \in C_c^\infty(\mathbb{R})$ and $\mathcal{W}(\psi)$ is a Parseval frame and the measure of $\supp (\psi) \backslash 2^N L$ is arbitrarily small. 
\end{cor}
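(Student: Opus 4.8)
We may take $N=0$. The set $2^{N}L$ still meets the two geometric conditions defining a Parseval frame wavelet set: dilation‑tiling, $\sum_{n\in\mathbb{Z}}\mathbbm{1}_{2^{N}L}(2^{n}\gamma)=\sum_{n\in\mathbb{Z}}\mathbbm{1}_{L}(2^{n}\gamma)=1$ a.e., is inherited from $L$ (this identity is forced by Parsevalness, per the remark following Theorem~\ref{thm:par}), and disjointness of the integer translates $\{2^{N}L+k\}_{k\in\mathbb{Z}}$ is immediate from $\overline{2^{N}L}\subseteq(-\tfrac12,\tfrac12)$; by Theorem~\ref{thm:par} (with $\hat\psi=\mathbbm{1}_{2^{N}L}$ the cross terms vanish and $M_{\psi}=N_{\psi}=1$) these two conditions make $\mathcal{W}(\check{\mathbbm{1}}_{2^{N}L})$ Parseval. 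Renaming $2^{N}L$ as $L$, we have a Parseval frame wavelet set with $\overline{L}$ a compact subset of $(-\tfrac12,\tfrac12)$; being bounded it is also bounded away from $0$, so $L\subseteq[-\beta,-\alpha]\cup[\alpha,\beta]$ with $0<\alpha\le\beta<\tfrac12$, and it remains to produce $\psi\in\mathscr{S}(\mathbb{R})$ with $\hat\psi\in C_c^\infty(\mathbb{R})$, $\mathcal{W}(\psi)$ Parseval, and $|\supp\hat\psi\setminus L|$ arbitrarily small. (The displayed statement is meant with $\supp\hat\psi$; since $\hat\psi\in C_c^\infty$ the function $\psi$ is entire, hence supported on all of $\mathbb{R}$.)

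The crux is to replace $L$ by a finite union $J$ of pairwise‑separated nondegenerate closed intervals that is again a Parseval frame wavelet set, lies in $(-\tfrac12+\eta,\tfrac12-\eta)$ for some $\eta>0$, stays bounded away from $0$, and has $|J\triangle L|$ as small as prescribed; Proposition~\ref{prop:BH} then applies to $J$. To build $J$, pass to logarithmic coordinates on each half‑line. Write $L_{+}:=L\cap(0,\infty)=2^{E_{+}}$ with $E_{+}:=\log_{2}L_{+}\subseteq[\log_{2}\alpha,\log_{2}\beta]$. Dilation‑tiling of $L$ says exactly that $E_{+}$ is a bounded fundamental domain for the translation action of $\mathbb{Z}$ on $\mathbb{R}$; thus $E_{+}=\{t+c(t):t\in[0,1)\}$ for a bounded measurable $c:[0,1)\to\mathbb{Z}$. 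A finite measurable partition of $[0,1)$ is approximable in measure by a partition into finitely many intervals (e.g.\ by majority‑vote assignment on a fine dyadic grid), so choose $\tilde c:[0,1)\to\mathbb{Z}$ piecewise constant on finitely many intervals, with range inside that of $c$ (in particular not pushing the extremal levels outward), with $|\{t:c(t)\ne\tilde c(t)\}|$ small. Set $\tilde E_{+}=\{t+\tilde c(t):t\in[0,1)\}$: a fundamental domain for $\mathbb{Z}$ that is a finite union of intervals, with $|E_{+}\triangle\tilde E_{+}|=2|\{c\ne\tilde c\}|$ small, and let $J_{+}=2^{\tilde E_{+}}$, so $|L_{+}\triangle J_{+}|$ is small ($t\mapsto 2^{t}$ has bounded distortion on the relevant compact interval). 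Do the mirror construction on $(-\infty,0)$ to get $J_{-}$, put $J=\overline{J_{+}}\cup\overline{J_{-}}$, and merge abutting intervals. Then $\{2^{n}J\}_{n\in\mathbb{Z}}$ tiles $\widehat{\mathbb{R}}$ by construction, $J\subseteq(-\tfrac12,\tfrac12)$ makes its integer translates disjoint, and (controlling the extremal sheets) $J\subseteq(-\tfrac12+\eta,\tfrac12-\eta)$ with $\operatorname{dist}(0,J)>0$; hence $J$ is a Parseval frame wavelet set which is a finite disjoint union of nondegenerate closed intervals separated by positive gaps (one straddling $0$), with $\Delta(J)>0$, so every hypothesis of Proposition~\ref{prop:BH} holds and $|J\triangle L|$ is arbitrarily small.

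Now apply Proposition~\ref{prop:BH} to $J=\bigcup_{i=1}^{l}I_{i}$. For $\delta$ below the threshold there we obtain $\psi_{\delta}\in\mathscr{S}(\mathbb{R})$ with $\mathcal{W}(\psi_{\delta})$ Parseval and $\hat\psi_{\delta}$ a finite sum of functions $f_{(I;\delta_{1},\delta_{2})}$ from Definition~\ref{defn:han}; each such function is smooth and compactly supported, so $\hat\psi_{\delta}\in C_c^\infty(\mathbb{R})$. By the support bound in Definition~\ref{defn:han}, $\supp\hat\psi_{\delta}\subseteq\bigcup_{i=1}^{l}[a_{i}-\delta,b_{i}+\delta]$, so $|\supp\hat\psi_{\delta}\setminus J|\le 2l\delta$; together with the bound on $|J\setminus L|$, choosing the interval approximation fine and then $\delta$ small makes $|\supp\hat\psi_{\delta}\setminus L|$ — equivalently $|\supp\hat\psi_{\delta}\setminus 2^{N}L|$ in the original scaling — arbitrarily small. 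Thus $\psi=\psi_{\delta}$ works. (Since Lemma~\ref{lem:han} yields $\kappa_{\psi_{\delta}}\equiv 1$, Theorem~\ref{thm:shrink} also certifies that $\mathcal{W}(\psi_{\delta})$ is Parseval.)

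The main obstacle is the construction of $J$ in the second paragraph: dilation‑tiling is an equality with no slack, so a naive interval approximation of $L$ destroys it, and the logarithmic change of variables is precisely what turns the requirement into the tractable problem of approximating a partition of the circle by an interval partition. One must also keep the new intervals pairwise separated, inside $(-\tfrac12,\tfrac12)$, and away from $0$, so that neither the interval‑ordering hypothesis nor the positivity of the $\delta$‑threshold in Proposition~\ref{prop:BH} is lost.
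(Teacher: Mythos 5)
Your overall strategy---reduce to $N=0$, pass to logarithmic coordinates so that dilation tiling becomes the statement that $\log_2 L_\pm$ is a fundamental domain for $\mathbb{Z}$-translation, approximate that domain by a finite union of intervals to get a Parseval frame wavelet set $J$ with $\Delta(J)>0$, and then invoke Proposition~\ref{prop:BH}---is sound, and it is the route the paper itself indicates (the paper prints no proof; it only says the set is to be modified so that Proposition~\ref{prop:BH} applies). Your reading of the conclusion as a statement about $\supp\hat\psi$ is also the right one. However, there is a genuine gap: the assertion that $L$, being bounded, ``is also bounded away from $0$'' is false. A Parseval frame wavelet set with $\overline{L}\subseteq(-\tfrac12,\tfrac12)$ may accumulate at the origin: in your log coordinates take $c(t)=-(n+2)$ on $[1-2^{-n},1-2^{-n-1})$ for $n\geq 0$; then $L_+=2^{\{t+c(t):t\in[0,1)\}}\subseteq(0,2^{-3/2}]$ tiles $(0,\infty)$ under dyadic dilation, its closure contains $0$, and together with its mirror image it satisfies every hypothesis of the corollary with $N=0$. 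For such an $L$ the containment $L\subseteq[-\beta,-\alpha]\cup[\alpha,\beta]$ is unavailable, $E_+$ is not contained in a compact interval, and $c$ is unbounded, so the partition of $[0,1)$ you propose to approximate is countable rather than finite; as written, the construction of $J$ rests on this false reduction.

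The gap is repairable inside your framework, but it needs an explicit extra step: since $c$ is integer valued, all but finitely many of its level sets have total measure less than any prescribed $\epsilon$; redefine $c$ on that exceptional set to one of the retained values (chosen so as not to violate the upper bound coming from $\sup L_+<\tfrac12$) before doing the interval approximation, and note that the portion of $L_+$ thereby discarded lies near $0$, where $t\mapsto 2^t$ contracts, so the change in the measure of $L$ is negligible. While making that repair you should also tighten the adjacent step: requiring the range of $\tilde c$ to lie inside the range of $c$ does not by itself prevent $t+\tilde c(t)$ from exceeding $\sup_s\,(s+c(s))$, so it does not guarantee $J\subseteq(-\tfrac12+\eta,\tfrac12-\eta)$; what does work is the fine-grid assignment you allude to, namely giving $\tilde c$ on each grid cell a value attained by $c$ on that same cell, so the supremum can increase by at most the grid width, which you then take smaller than the slack provided by $\overline{2^NL}$ being compactly contained in $(-\tfrac12,\tfrac12)$. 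With these two points fixed, the rest of your argument is correct: the reduction to $N=0$ via Theorem~\ref{thm:par}, the verification that $J$ is a Parseval frame wavelet set, the application of Proposition~\ref{prop:BH}, and the support estimate $|\supp\hat\psi_\delta\setminus J|\leq 2l\delta$ all hold as you state them.
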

%%%%%%%%%%%%%%%%%%%%%%%%%%%%%
\subsection{Extensions of Han's construction}\label{sec:fail}
%%%%%%%%%%%%%%%%%%%%%%%%%%%%%
We would like to extend Han's results even further in order to create Schwartz class Parseval frames over $L^2(\mathbb{R}^d)$ for $d>1$. The basic idea of Han's construction is to replace each $\mathbbm{1}_{[a_i, b_i]}(x)$ with an appropriate $C_c^\infty$ bump function $f_{([a_i,b_i];\delta_i,\tilde{\delta}_i)}(x)$, where $\cup_i [a_i, b_i]$ is a Parseval frame wavelet set with $\Delta (\cup_i [a_i,b_i])>0$. We will attempt to generalize the smoothing techniques on the class of Parseval frame wavelet sets
\begin{equation*}
\{L_a\} = \{ [-2a, 2a]^2\backslash[-a,a]^2:0 < a < \frac{1}{4} \}.
\end{equation*}
Any such $L_a$ is indeed a Parseval frame wavelet set since each tiles the plane under dyadic dilation and $\Delta(L_a)>0$ for $0<a<\frac{1}{4}$ \cite{BSu06} \cite{DLS98}. These sets are natural ones to start with because of their simplicity. We need to define an appropriate family of smooth functions to replace each $\mathbbm{1}_{L_a}(x,y)$. We try the following functions:
\begin{eqnarray}
&& h_{(L_a;\delta,\frac{\delta}{2})}(x,y)\\\nonumber
&&  = f_{([-2a,2a];\delta,\delta)}(x)f_{([-2a,2a];\delta,\delta)}(y) - f_{([-a,a];\frac{\delta}{2},\frac{\delta}{2})}(x)f_{([-a,a];\frac{\delta}{2},\frac{\delta}{2})}(y), \textrm{and} \label{eqn:h}
\end{eqnarray}
\begin{eqnarray}
&& g_{(L_a;\delta,\frac{\delta}{2})}(x,y) \label{eqn:g}\\
&&  = \left\{ \begin{array}{lr}
\theta\left(\frac{2a-|y|}{\delta}\right) &\textrm{when $|x| \in [0,2a-\delta]$ and $|y| \in [2a - \delta, 2a + \delta]$} \\
\theta\left(\frac{2a-|y|}{\delta}\right)\theta\left(\frac{2a-|x|}{\delta}\right) & \textrm{when $|x|,|y| \in [2a - \delta, 2a + \delta]$} \\
\theta\left(\frac{2a-|x|}{\delta}\right) & \textrm{when $|y| \in [0,2a-\delta]$ and $|x| \in [2a - \delta, 2a + \delta]$} \\
1 & \textrm{when $(|x|,|y|)^T \in [0,2a-\delta]^2 \backslash [0,a + \frac{\delta}{2}]^2$} \\
\theta\left(\frac{|y|-a}{\delta/2}\right) & \textrm{when $|x| \in [0,a-\frac{\delta}{2}]$ and $|y| \in [a - \frac{\delta}{2}, a + \frac{\delta}{2}]$} \\
\theta\left(\frac{|y|-a}{\delta/2}\right)\theta\left(\frac{|x|-a}{\delta/2}\right) & \textrm{when $|x|,|y| \in [a - \frac{\delta}{2}, a + \frac{\delta}{2}]$} \\
\theta\left(\frac{|x|-a}{\delta/2}\right) & \textrm{when $|y| \in [0,a-\frac{\delta}{2}]$ and $|x| \in [a - \frac{\delta}{2}, a + \frac{\delta}{2}]$} \\
0& \textrm{otherwise},\nonumber
\end{array} \right.
\end{eqnarray}
where $\theta$ is as in Lemma \ref{lem:theta}. We first note that $g$ is well defined even though the piecewise domains overlap. In order to form $h$, we tensor the $1$-dimensional interval bump functions to create $2$-dimensional rectangle bump functions and then subtract such functions corresponding to $[-2a,2a]^2$ and $[-a,a]^2$. The function $g$ may be seen as a piecewise tensor product. In fact, $h_{(L_a;\delta,\frac{\delta}{2})}(x,y) = g_{(L_a;\delta,\frac{\delta}{2})}(x,y) $
\begin{equation*}
\textrm{ for $(x,y)^T \notin [-a -\frac{\delta}{2}, a +\frac{\delta}{2}]^2\backslash [-a +\frac{\delta}{2}, a -\frac{\delta}{2}]^2$}
\end{equation*}
and $\supp h_{(L_a;\delta,\frac{\delta}{2})} = \supp g_{(L_a;\delta,\frac{\delta}{2})}$. Although both of these functions seem like promising candidates, neither $\sum_{n\in\mathbb{Z}} h_{(L_a;\delta,\frac{\delta}{2})}^2(2^n x, 2^n y)$ nor $\sum_{n\in\mathbb{Z}} g_{(L_a;\delta,\frac{\delta}{2})}^2(2^n x, 2^n y)$ are equal to $1$ almost everywhere. It follows from Theorem \ref{thm:par} that neither $\mathcal{W}(\check{h})$ nor $\mathcal{W}(\check{g})$ are Parseval frames.  The proofs of the following 3 propositions are in Section~\ref{section:app}.
\begin{prop}\label{prop:h}
Let $0 < a < \frac{1}{4}$, set $L = [-2a,2a]^2 \backslash [-a,a]^2$, and pick a $\delta$ such that $0 < \delta < \frac{1}{2} \min \{1-4a,a \}$. Let $h$ be as in Equation \ref{eqn:h}. Then $\sum_{n\in\mathbb{Z}} h_{(L_a;\delta,\frac{\delta}{2})}^2(2^n x, 2^n y)$ is not equal to $1$ a.e.
\end{prop}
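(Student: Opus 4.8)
The plan is to exhibit an open set of positive measure on which
\[
\Sigma_\delta(x,y):=\sum_{n\in\mathbb Z}h_{(L_a;\delta,\frac{\delta}{2})}^2(2^n x,2^n y)
\]
can be evaluated in closed form and is visibly different from $1$. The natural place to look is a small ball $B$ about the midpoint $(2a,0)$ of an edge of the outer square $[-2a,2a]^2$: it is precisely along the boundary of $[-2a,2a]^2$ that the ``difference of tensor bumps'' defining $h$ fails to reassemble into a partition of unity under dilation by $2$. First I would cut the sum over $n$ down to two terms on $B$. From Definition~\ref{defn:han},
\[
\supp h_{(L_a;\delta,\tfrac{\delta}{2})}\subseteq[-2a-\delta,\,2a+\delta]^2,
\]
and $h_{(L_a;\delta,\frac{\delta}{2})}\equiv 0$ on $[-a+\tfrac{\delta}{2},\,a-\tfrac{\delta}{2}]^2$, since $\delta<2a$ forces $[-a+\tfrac{\delta}{2},a-\tfrac{\delta}{2}]\subseteq[-2a+\delta,2a-\delta]$, so there both $f_{([-2a,2a];\delta,\delta)}$ and $f_{([-a,a];\frac{\delta}{2},\frac{\delta}{2})}$ equal $1$ and $h=1-1=0$. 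Hence, for $B$ small enough, $2^nB$ lies outside $[-2a-\delta,2a+\delta]^2$ for every $n\ge 1$ (as $4a>2a+\delta$) and inside $[-a+\tfrac{\delta}{2},a-\tfrac{\delta}{2}]^2$ for every $n\le -2$ (as $2^n\cdot 2a\le\tfrac a2<a-\tfrac{\delta}{2}$ because $\delta<a$), so on $B$ only $n=0$ and $n=-1$ contribute to $\Sigma_\delta$.

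Next I would evaluate those two terms directly from Definition~\ref{defn:han}, shrinking $B$ further if necessary. For $(x,y)\in B$: since $x\approx 2a>a+\tfrac{\delta}{2}$, $x$ lies off $\supp f_{([-a,a];\frac{\delta}{2},\frac{\delta}{2})}$, and since $y\approx 0$ lies in the flat parts of both one-dimensional factors, $h_{(L_a;\delta,\frac{\delta}{2})}(x,y)=f_{([-2a,2a];\delta,\delta)}(x)=\theta\!\big(\tfrac{2a-x}{\delta}\big)$. At the dilate $n=-1$: now $x/2\approx a$ lies in the flat part of $f_{([-2a,2a];\delta,\delta)}$ while $f_{([-a,a];\frac{\delta}{2},\frac{\delta}{2})}(x/2)=\theta\!\big(\tfrac{a-x/2}{\delta/2}\big)=\theta\!\big(\tfrac{2a-x}{\delta}\big)$, and $y/2\approx 0$ is again in the flat parts, so $h_{(L_a;\delta,\frac{\delta}{2})}(x/2,y/2)=1-\theta\!\big(\tfrac{2a-x}{\delta}\big)$. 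Writing $u=u(x)=\theta\!\big(\tfrac{2a-x}{\delta}\big)$, this gives on $B$
\[
\Sigma_\delta(x,y)=u^2+(1-u)^2=1-2u(1-u).
\]

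This equals $1$ precisely where $u\in\{0,1\}$ (since $u^2+(1-u)^2=1\iff 2u(1-u)=0$), so it suffices to see that $u\notin\{0,1\}$ on a positive-measure subset of $B$. At the center $(2a,0)$ we have $u=\theta(0)$, and evaluating the identity of Lemma~\ref{lem:theta} at $x=0$ gives $2\theta(0)^2=1$, whence $\theta(0)=\pm\tfrac{1}{\sqrt{2}}\notin\{0,1\}$ and $\Sigma_\delta(2a,0)\ne 1$. Since $\Sigma_\delta$ is a finite sum of continuous functions on $B$, the set $\{(x,y)\in B:\Sigma_\delta(x,y)\ne 1\}$ is a nonempty open set, hence of positive Lebesgue measure, which proves that $\Sigma_\delta$ is not a.e.\ equal to $1$. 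The one step I expect to require care is the support bookkeeping above --- verifying that no far-off dilate $2^nB$ clips either the outer rim $\partial[-2a-\delta,2a+\delta]^2$ or the inner square $[-a+\tfrac{\delta}{2},a-\tfrac{\delta}{2}]^2$, which is exactly where the quantitative bound $0<\delta<\tfrac12\min\{1-4a,a\}$ enters; everything afterward is a substitution into $\theta(t)^2+\theta(-t)^2=1$.
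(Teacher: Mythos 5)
Your proposal is correct and follows essentially the same route as the paper's proof: the paper restricts to the strip $[0,a-\frac{\delta}{2}]\times(a-\frac{\delta}{2},a+\frac{\delta}{2}]$, where only the scales $n=0$ and $n=1$ contribute, obtaining $\bigl(1-\theta\bigl(\tfrac{a-y}{\delta/2}\bigr)\bigr)^2+\theta^2\bigl(\tfrac{a-y}{\delta/2}\bigr)<1$ on a positive-measure set, which is exactly your identity $u^2+(1-u)^2=1-2u(1-u)$ on the dilation- and symmetry-equivalent region near $(2a,0)$ with scales $n=0$ and $n=-1$. Your only deviations are cosmetic: more explicit support bookkeeping for the discarded dilates, and pinning the failure at the center via $\theta(0)^2=\tfrac12$ plus continuity rather than asserting $0<\theta<1$ across the transition strip.
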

\begin{prop}\label{prop:g}
Let $0 < a < \frac{1}{4}$, set $L = [-2a,2a]^2 \backslash [-a,a]^2$, and pick a $\delta$ such that $0 < \delta < \frac{1}{2} \min \{1-4a,a \}$. Let $g$ be as in Equation \ref{eqn:g}. Then $\sum_{n\in\mathbb{Z}} g_{(L_a;\delta,\frac{\delta}{2})}^2(2^n x, 2^n y)$ is not equal to $1$ a.e.
\end{prop}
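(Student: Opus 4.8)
The plan is to prove the sharper statement that $\sum_{n\in\mathbb{Z}} g_{(L_a;\delta,\frac{\delta}{2})}^2(2^n x, 2^n y)<1$ on a set of positive Lebesgue measure, namely a small two-dimensional neighborhood of the outer corner $(2a,2a)$ of the square annulus $L_a$. The underlying reason the sum cannot be identically $1$ is this: along the coordinate directions $g$ restricts to an honest one-dimensional bump of the type in Definition~\ref{defn:han}, so the tiling identity $\sum_n \mathbbm{1}_{2^nL_a}=1$ is preserved there through the mechanism of Lemma~\ref{lem:han}; but at the corners the piecewise tensor construction makes $g$ a \emph{product} of two copies of $\theta$, so the reproducing sum there involves \emph{fourth} powers of $\theta$ rather than second powers. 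Since $p^2+(1-p)^2<1$ for $p\in(0,1)$, we get $\theta(s)^4+\theta(-s)^4<1$ whenever $0<\theta(s)^2<1$; at $s=0$ Lemma~\ref{lem:theta} forces $\theta(0)^2=\tfrac12$, so this quantity drops to $\tfrac12$.

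To carry this out, I would fix a small $\rho\in(0,\delta)$ (to be pinned down at the very end) and work on $U=\{(x,y): x>0,\ y>0,\ |x-2a|<\rho,\ |y-2a|<\rho\}$. Using the three-part bound $0<\delta<\tfrac12\min\{1-4a,a\}$ one checks that for $(x,y)\in U$ only the scales $n=0$ and $n=-1$ can contribute: for $n\ge 1$ both entries of $2^n(x,y)$ exceed $2a+\delta$ and hence leave $\supp g$, while for $n\le-2$ both entries drop below $a-\tfrac{\delta}{2}$, where $g$ again vanishes. Moreover $2^0(x,y)=(x,y)$ lies in the ``outer corner'' piece $\{|x|,|y|\in[2a-\delta,2a+\delta]\}$ of Equation~\ref{eqn:g}, on which $g(x,y)=\theta\!\left(\tfrac{2a-y}{\delta}\right)\theta\!\left(\tfrac{2a-x}{\delta}\right)$, and $2^{-1}(x,y)=(x/2,y/2)$ lies in the ``inner corner'' piece $\{|x|,|y|\in[a-\tfrac{\delta}{2},a+\tfrac{\delta}{2}]\}$, on which $g(x/2,y/2)=\theta\!\left(\tfrac{y/2-a}{\delta/2}\right)\theta\!\left(\tfrac{x/2-a}{\delta/2}\right)$.

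Setting $u=\tfrac{2a-x}{\delta}$ and $v=\tfrac{2a-y}{\delta}$, the elementary identity $\tfrac{x/2-a}{\delta/2}=-u$ (and likewise in $v$) collapses the whole sum to
\[
\sum_{n\in\mathbb{Z}} g_{(L_a;\delta,\frac{\delta}{2})}^2(2^n x,2^n y)=\theta(u)^2\theta(v)^2+\theta(-u)^2\theta(-v)^2 .
\]
At the corner $(x,y)=(2a,2a)$, i.e.\ $u=v=0$, this equals $\theta(0)^4+\theta(0)^4=\tfrac12$ by Lemma~\ref{lem:theta}. The right-hand side is continuous in $(x,y)$ because $\theta\in C^\infty(\mathbb{R})$, so a sufficiently small $\rho>0$ makes $\sum_n g^2(2^nx,2^ny)<\tfrac34$ throughout $U$; since $U$ has positive measure, the sum is not $1$ almost everywhere. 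I expect the only genuine work to be the scale count in the middle step: one must verify, via the three-term inequality on $\delta$, that no scale other than $0$ and $-1$ reaches into the (thickened-annulus) support of $g$ over $U$, and that the two surviving evaluations land in precisely the corner pieces named above. Because the eight pieces of Equation~\ref{eqn:g} have overlapping domains, this is a careful but routine case analysis; the remainder is the two-line computation displayed above.
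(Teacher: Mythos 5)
Your proof is correct, and it reaches the conclusion by a somewhat different route than the paper. The underlying mechanism is the same one the paper exploits: at the dyadic corner squares the piecewise tensor construction turns the quadratic partition identity $\theta(s)^2+\theta(-s)^2=1$ into the product expression $\theta(u)^2\theta(v)^2+\theta(-u)^2\theta(-v)^2$, which cannot be identically $1$. But the paper proceeds globally: it computes $g^2(x,y)+g^2(2x,2y)$ piecewise on the whole positive quadrant, iterates to identify the full dyadic sum as $1$ off the squares $2^m[a-\tfrac{\delta}{2},a+\tfrac{\delta}{2}]^2$, and then shows on such a square that $\alpha\beta+(1-\alpha)(1-\beta)\neq 1$ whenever $0<\alpha<1$ and $\tfrac12<\beta<1$ (since equality forces $\alpha=\beta/(2\beta-1)>1$), using continuity and the intermediate value theorem to produce a positive-measure set $E$. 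You instead localize to a small neighborhood $U$ of the single corner point $(2a,2a)$, verify by a scale count (which indeed only needs $\rho<\delta<\tfrac{a}{2}$, so your hypotheses suffice) that only $n=0,-1$ contribute there and land in the outer and inner corner pieces respectively, evaluate the sum at the corner to be exactly $\tfrac12$ via $\theta(0)^2=\tfrac12$, and finish by continuity of $\theta$; this is closer in spirit to the paper's proof of Proposition~\ref{prop:h} than to its own proof of Proposition~\ref{prop:g}. Your argument is shorter and more elementary for the stated claim; the paper's heavier computation buys the additional structural fact, recorded immediately after the proposition, that the dyadic sum equals $1$ everywhere outside $\{0\}\cup\bigcup_m 2^m[a-\tfrac{\delta}{2},a+\tfrac{\delta}{2}]^2$, which is what motivates the corrected function $f_{(L_a;\delta,\frac{\delta}{2})}$ and feeds into Proposition~\ref{prop:f}; your local argument does not yield that.
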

However, as the calculations in Section~\ref{section:app} show, 
\begin{equation*}
\sum_{n \in \mathbb{Z}} g^2_{(L_a;\delta,\frac{\delta}{2})} (2^mx,2^ny)=1 \textrm{ for all}
\end{equation*}
\begin{equation*}
(|x|,|y|)^T \notin \{0\} \cup \left(
\bigcup_{m \in \mathbb{Z}} 2^m \left[a - \frac{\delta}{2}, a + \frac{\delta}{2}\right]^2 \right).
\end{equation*}
So we adjust $h_{(L_a;\delta,\frac{\delta}{2})}(x,y)$ on $C = [a - \frac{\delta}{2}, a + \frac{\delta}{2}]^2 \cup [2a - \delta, 2a + \delta]^2$ in hopes of obtaining a Parseval frame. We do this by setting
\begin{equation*}
f_{(L_a;\delta,\frac{\delta}{2})}(x,y) = h_{(L_a;\delta,\frac{\delta}{2})}(x,y) \textrm{ for $(|x|,|y|)^T \notin C$}
\end{equation*}
and
\begin{equation*}
f_{(L_a;\delta,\frac{\delta}{2})}(x,y) = f_{(L_a;\delta,\frac{\delta}{2})}(\tilde{x},\tilde{y}),
\end{equation*}
for $(|x|,|y|)^T,(|\tilde{x}|,|\tilde{y}|)^T \in C$, $|x|+|y|=|\tilde{x}|+|\tilde{y}|$, and $|x|+|y|$ small enough.  Explicitly, $f_{(L_a;\delta,\frac{\delta}{2})}(x,y) =$
\begin{equation*}
\left\{ \begin{array}{lr}
\theta\left(\frac{2a-|y|}{\delta}\right) & \textrm{$|x| \in [0, 2a-\delta]$ and $|y| \in [2a-\delta,2a+\delta]$} \\
\theta\left(\frac{4a-|x|-|y|-\delta}{\delta} \right) & \textrm{$|x|,|y| \in [2a - \delta, 2a + \delta]$, for $4a-2\delta \leq |x|+|y| \leq 4a$}\\
\theta\left(\frac{2a-|x|}{\delta}\right) & \textrm{$|y| \in [0, 2a-\delta]$ and $|x| \in [2a-\delta,2a+\delta]$} \\
1 & \textrm{$(|x|,|y|)^T \in [0,2a-\delta]^2 \backslash [0,a + \frac{\delta}{2}]^2$} \\
1 & \textrm{ $|x|,|y| \in [a - \frac{\delta}{2},a +\frac{\delta}{2}]$ for $2a\leq |x|+|y| \leq 2a + \delta$} \\
\theta\left(\frac{|y|-a}{\delta/2}\right) & \textrm{$|x| \in [0, a-\frac{\delta}{2}]$ and $|y| \in [a-\frac{\delta}{2},a+\frac{\delta}{2}]$} \\
\theta\left(\frac{|x|+|y|-2a +\delta/2}{\delta/2} \right) & \textrm{$|x|,|y| \in [a - \frac{\delta}{2},a +\frac{\delta}{2}]$ for $2a -\delta\leq |x|+|y| \leq 2a$} \\
\theta\left(\frac{|x|-a}{\delta/2}\right) & \textrm{$|y| \in [0, a-\frac{\delta}{2}]$ and $|x| \in [a-\frac{\delta}{2},a+\frac{\delta}{2}]$} \\
0 &  \textrm{otherwise.}
\end{array} \right.
\end{equation*}
\begin{prop}\label{prop:f}
Let $0 < a < \frac{1}{4}$, set $L = [-2a,2a]^2 \backslash [-a,a]^2$, and pick a $0 < \delta < \frac{1}{2} \min \{1-4a,a \}$. Let $\hat{\psi}_\delta = f_{(L_a;\delta,\frac{\delta}{2})}$. For $x, y >0$,
\begin{equation*}
\hat{\psi}_\delta^2(\vec{x}) + \hat{\psi}_\delta^2(2\vec{x}) = f_{([-2a,2a]^2\backslash[-\frac{a}{2},\frac{a}{2}]^2;\delta,\frac{\delta}{4})}^2(\vec{x}).
\end{equation*}
\end{prop}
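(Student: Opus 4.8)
The plan is to read Proposition~\ref{prop:f} as the two-dimensional, concentric-square-annulus analogue of the concatenation identity $f^2_{([a,b];\delta_1,\delta_2)}+f^2_{([b,c];\delta_2,\delta_3)}=f^2_{([a,c];\delta_1,\delta_3)}$ of Lemma~\ref{lem:han}: gluing the annulus $L_a=[-2a,2a]^2\setminus[-a,a]^2$ to the annulus $L_{a/2}=[-a,a]^2\setminus[-a/2,a/2]^2$ should produce the annulus $[-2a,2a]^2\setminus[-a/2,a/2]^2$. Since all three functions in play depend on $(x,y)$ only through $(|x|,|y|)$ and the statement concerns only $x,y>0$, I may replace $|x|,|y|$ by $x,y$ throughout and work on $(0,\infty)^2$. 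Applying the dilation $(x,y)\mapsto(2x,2y)$ directly to the explicit formula for $f_{(L_a;\delta,\delta/2)}$ — this halves every breakpoint and every transition width, exactly as in the scaling relation of Lemma~\ref{lem:han} — gives
\[
\hat\psi_\delta^2(2\vec{x})=f_{(L_a;\delta,\delta/2)}^2(2x,2y)=f_{(L_{a/2};\delta/2,\delta/4)}^2(x,y),
\]
so the second summand is the same kind of object, built on $L_{a/2}$ with outer and inner transition widths $\delta/2$ and $\delta/4$. I will also write out, in the obvious way, the piecewise formula for the right-hand side $f_{([-2a,2a]^2\setminus[-a/2,a/2]^2;\delta,\delta/4)}$: outer transitions $\theta(\tfrac{2a-y}{\delta})$ near $2a$ together with the diagonal transition $\theta(\tfrac{4a-x-y-\delta}{\delta})$ in the outer corner, the value $1$ on $[0,2a-\delta]^2\setminus[0,a/2+\delta/4]^2$ and its inner-corner plateau piece, and inner transitions $\theta(\tfrac{y-a/2}{\delta/4})$ near $a/2$ together with the diagonal transition $\theta(\tfrac{x+y-a+\delta/4}{\delta/4})$ in the inner corner; this ratio-$4$ bump is the only object here not already displayed in Section~\ref{sec:fail}.

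The proof is then a finite case check over the rectangles cut out of $(0,\infty)^2$ by the breakpoints $a/2-\delta/4,\ a/2+\delta/4,\ a-\delta/2,\ a+\delta/2,\ 2a-\delta,\ 2a+\delta$. The hypothesis $0<\delta<\tfrac12\min\{1-4a,a\}$, in particular $\delta<a/2$, fixes their order $a/2-\delta/4<a/2+\delta/4<a-\delta/2<a+\delta/2<2a-\delta<2a+\delta$ and yields the containments $[a/2-\delta/4,a/2+\delta/4]\subseteq[0,a-\delta/2]$ and $[a-\delta/2,a+\delta/2]\subseteq[a/2+\delta/4,2a-\delta]$, so every band is nondegenerate and each point lies in a single band in each coordinate. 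The resulting regions fall into three kinds. (i) On the complement of the overlap of the two summands' supports, exactly one summand is nonzero, and there its piecewise value coincides verbatim with the target's: they share the same outer transitions and outer-corner diagonal piece, the same width-$\delta/4$ inner transitions near $a/2$, or both equal $1$; so the identity reads $t^2+0=t^2$. (ii) The two summands are simultaneously nonzero only where $\max(x,y)\in[a-\delta/2,a+\delta/2]$, i.e.\ near the common boundary of $L_a$ and $L_{a/2}$; there $f_{(L_a;\delta,\delta/2)}$ and $f_{(L_{a/2};\delta/2,\delta/4)}$ equal $\theta(t)$ and $\theta(-t)$ for a common affine argument — $t=\tfrac{y-a}{\delta/2}$ or $t=\tfrac{x-a}{\delta/2}$ in the edge subregions (matching case $6$ of the $f_{(L_a;\delta,\delta/2)}$ formula against case $1$ applied to $(2x,2y)$, and case $8$ against case $3$), and $t=\tfrac{x+y-2a+\delta/2}{\delta/2}$ in the corner $[a-\delta/2,a+\delta/2]^2$ (matching cases $5$ and $7$ against case $2$ applied to $(2x,2y)$, using $\tfrac{4a-2x-2y-\delta}{\delta}=-t$) — so the sum is $\theta(t)^2+\theta(-t)^2=1$ by Lemma~\ref{lem:theta}, while in this same region the target equals $1$, lying strictly inside $[-2a,2a]^2\setminus[-a/2,a/2]^2$ and away from both of its boundaries. (iii) In the inner corner $[a/2-\delta/4,a/2+\delta/4]^2$, $f_{(L_a;\delta,\delta/2)}$ vanishes (that square sits strictly inside its inner hole), while $f_{(L_{a/2};\delta/2,\delta/4)}(x,y)$ is the value at $(X,Y)=(2x,2y)$ of the inner-corner piece of the $f_{(L_a;\delta,\delta/2)}$ formula (cases $5$ and $7$), namely $1$ for $x+y\geq a$ and $\theta(\tfrac{X+Y-2a+\delta/2}{\delta/2})$ for $x+y\leq a$; since $\tfrac{2x+2y-2a+\delta/2}{\delta/2}=\tfrac{x+y-a+\delta/4}{\delta/4}$, this matches the target's own inner-corner description term by term, so the two sides agree.

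I expect the main obstacle to be purely organizational bookkeeping: there are a few dozen band-combinations, and one must be disciplined about (a) invoking $\delta<a/2$ for every band inclusion used, (b) tracking which clause of the $f_{(L_a;\delta,\delta/2)}$ formula governs the \emph{dilated} argument $(2x,2y)$, and (c) checking in the two families of corner regions that the $\theta$-arguments — which there are affine in $x+y$ rather than in a single coordinate — are exact negatives of one another (for (ii)) or exactly equal after the rescaling (for (iii)); these corners are the only places where a stray factor of two could slip by unnoticed. A minor preliminary is simply committing to the explicit piecewise formula for the ratio-$4$ annulus bump $f_{([-2a,2a]^2\setminus[-a/2,a/2]^2;\delta,\delta/4)}$, since the excerpt displays only the ratio-$2$ bump $f_{(L_a;\delta,\delta/2)}$; once that formula is fixed in the natural way, the case check above closes.
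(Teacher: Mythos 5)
Your proposal is correct and matches the paper's own argument: the appendix proof of Proposition~\ref{prop:f} is exactly this region-by-region piecewise computation, writing out $\hat{\psi}_\delta^2(\vec{x})+\hat{\psi}_\delta^2(2\vec{x})$ over the bands determined by the breakpoints, cancelling via $\theta^2(t)+\theta^2(-t)=1$ on the overlap near $\max(x,y)\approx a$ (including the diagonal corner pieces), and identifying the result with the piecewise formula for $f_{([-2a,2a]^2\backslash[-\frac{a}{2},\frac{a}{2}]^2;\delta,\frac{\delta}{4})}$. Your rescaling bookkeeping (e.g.\ $\tfrac{4a-2x-2y-\delta}{\delta}=-\tfrac{x+y-2a+\delta/2}{\delta/2}$ and $\tfrac{2x+2y-2a+\delta/2}{\delta/2}=\tfrac{x+y-a+\delta/4}{\delta/4}$) is precisely what makes the paper's two displayed case lists collapse to the claimed right-hand side.
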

\begin{prop}
Let $0 < a < \frac{1}{4}$, set $L = [-2a,2a]^2 \backslash [-a,a]^2$, and pick a $0 < \delta < \frac{1}{2} \min \{1-4a,a \}$. Let $\hat{\psi}_\delta = f_{(L_a;\delta,\frac{\delta}{2})}$. Then $\hat{\psi}_\delta \in C_c(\widehat{\mathbb{R}}^2)\backslash C_c^1(\widehat{\mathbb{R}}^2)$ and $\mathcal{W}(\psi_\delta)$ is a Parseval frame for $L^2(\mathbb{R}^2)$.
\end{prop}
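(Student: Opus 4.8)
The assertion splits into two unrelated parts --- that $\hat\psi_\delta$ is continuous and compactly supported but not $C^1$, and that $\mathcal{W}(\psi_\delta)$ is a Parseval frame --- and the plan is to establish them separately. For the Parseval part the tool will be Theorem~\ref{thm:par}: it suffices to show (i) $\supp\hat\psi_\delta$ is small enough that every translate $\supp\hat\psi_\delta + k$, $k\in\mathbb{Z}^2\backslash\{0\}$, is a.e.\ disjoint from it, which kills all $k\neq 0$ summands in $M_{\psi_\delta}$ and $N_{\psi_\delta}$, and (ii) $\kappa_{\psi_\delta}(\gamma) = \sum_{n\in\mathbb{Z}}|\hat\psi_\delta(2^n\gamma)|^2 = 1$ for a.e.\ $\gamma$; together these give $M_{\psi_\delta}=N_{\psi_\delta}=1$, and Theorem~\ref{thm:par} then forces $1\le A\le B\le 1$.

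For the regularity part, I would first check that the nine-case formula defining $f_{(L_a;\delta,\frac{\delta}{2})}$ glues continuously. On every common boundary of two cases the two prescriptions agree: the tensor cases meet the constant $1$ where $\theta$ is evaluated at $\pm1$, and along the edges of the diagonal squares $C = [a-\tfrac{\delta}{2},a+\tfrac{\delta}{2}]^2 \cup [2a-\delta,2a+\delta]^2$ the diagonal prescriptions $\theta(\tfrac{4a-|x|-|y|-\delta}{\delta})$ and $\theta(\tfrac{|x|+|y|-2a+\delta/2}{\delta/2})$ collapse precisely to the one-variable tensor prescriptions once the other coordinate reaches the edge of its interval; continuity across $\{x=0\}$ and $\{y=0\}$ is automatic since $\hat\psi_\delta$ is there locally independent of the vanishing variable. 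Compact support is immediate, as every case vanishes off $[-(2a+\delta),2a+\delta]^2$. For the failure of $C^1$, I would examine the interface $|x| = a-\tfrac{\delta}{2}$, $|y|\in(a-\tfrac{\delta}{2},a+\tfrac{\delta}{2})$: on the side $|x| < a-\tfrac{\delta}{2}$ one has $\hat\psi_\delta = \theta(\tfrac{|y|-a}{\delta/2})$, so $\partial_x\hat\psi_\delta = 0$; on the side $|x| > a-\tfrac{\delta}{2}$ one has $\hat\psi_\delta = \theta(\tfrac{|x|+|y|-2a+\delta/2}{\delta/2})$, so $\partial_x\hat\psi_\delta = \operatorname{sgn}(x)\tfrac{2}{\delta}\theta'(\tfrac{|x|+|y|-2a+\delta/2}{\delta/2})$, which on the interface equals $\operatorname{sgn}(x)\tfrac{2}{\delta}\theta'(\tfrac{|y|-a}{\delta/2})$. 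Since $\theta$ is non-constant on $[-1,1]$ this is nonzero for some $|y|$, so $\partial_x\hat\psi_\delta$ jumps across the interface and $\hat\psi_\delta\notin C^1$.

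Turning to the Parseval part: $\hat\psi_\delta$ is real-valued and bounded, and since $\delta < \tfrac12(1-4a)$ we have $\supp\hat\psi_\delta\subseteq[-(2a+\delta),2a+\delta]^2\subseteq(-\tfrac12,\tfrac12)^2$, so for each $k\in\mathbb{Z}^2\backslash\{0\}$ the sets $\supp\hat\psi_\delta$ and $\supp\hat\psi_\delta+k$ are a.e.\ disjoint, whence for a.e.\ $\gamma$ and all $n$, $k\neq0$ the product $\hat\psi_\delta(2^n\gamma)\hat\psi_\delta(2^n\gamma+k)$ vanishes; moreover $\hat\psi_\delta$ vanishes near the origin, so $\kappa_{\psi_\delta}(\gamma)$ is a finite sum for every $\gamma\neq0$. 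To prove $\kappa_{\psi_\delta}\equiv1$ a.e., note that $\hat\psi_\delta$ is even in each variable, so it suffices to treat $\gamma$ in the open first quadrant, and $\kappa_{\psi_\delta}(2\gamma)=\kappa_{\psi_\delta}(\gamma)$, so it suffices to verify the identity on the dyadic fundamental domain $\Omega = \{\,a-\tfrac{\delta}{2}\le\|\gamma\|_\infty < 2a-\delta\,\}$. Using $\delta<\tfrac{a}{2}$ one checks that on $\Omega$ only the two consecutive dilates $\hat\psi_\delta(\gamma)$ and $\hat\psi_\delta(2\gamma)$ can be nonzero, the latter only where $\|\gamma\|_\infty\le a+\tfrac{\delta}{2}$. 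Where $\|\gamma\|_\infty > a+\tfrac{\delta}{2}$, $\gamma$ has both coordinates in $[0,2a-\delta]$ and one exceeding $a+\tfrac{\delta}{2}$, i.e.\ $\hat\psi_\delta(\gamma)=1$ and $\hat\psi_\delta(2\gamma)=0$, so $\kappa_{\psi_\delta}(\gamma)=1$. Where $\|\gamma\|_\infty\le a+\tfrac{\delta}{2}$, Proposition~\ref{prop:f} gives $\kappa_{\psi_\delta}(\gamma)=\hat\psi_\delta^2(\gamma)+\hat\psi_\delta^2(2\gamma)=f^2_{([-2a,2a]^2\backslash[-\frac{a}{2},\frac{a}{2}]^2;\delta,\frac{\delta}{4})}(\gamma)$, and here $\gamma$ lies in the constant-$1$ part of that function (both coordinates are in $[0,2a-\delta]$ and one exceeds $\tfrac{a}{2}+\tfrac{\delta}{4}$, valid because $\delta<\tfrac{2a}{3}$), so again $\kappa_{\psi_\delta}(\gamma)=1$. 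Hence $\kappa_{\psi_\delta}\equiv1$ a.e., and Theorem~\ref{thm:par} finishes the argument. Alternatively one could invoke Theorem~\ref{thm:shrink}, checking $\bigcup_n 2^n\supp_\epsilon\hat\psi_\delta = \widehat{\mathbb{R}}^2$ a.e.\ from the corresponding tiling property of $L_a$.

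The main obstacle is the identity $\kappa_{\psi_\delta}\equiv1$ --- more precisely, its genuinely two-dimensional core: on the corner overlap, where both $\hat\psi_\delta(\gamma)$ and $\hat\psi_\delta(2\gamma)$ are governed by the diagonal prescription, a naive tensor-product computation does not sum to $1$ (this is exactly the failure behind Propositions~\ref{prop:h} and~\ref{prop:g}). The modification of $\hat\psi_\delta$ on $C$ repairs this, and Proposition~\ref{prop:f} packages the repair: on the corners the sum collapses, via $\theta(t)^2 + \theta(-t)^2 = 1$ applied to the single variable $|x|+|y|$, to the one-dimensional telescoping of Lemma~\ref{lem:han}. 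Everything else is bookkeeping of which piecewise case is active on which part of $\Omega$.
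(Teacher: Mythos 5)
Your proposal is correct and follows essentially the paper's own route: Theorem~\ref{thm:par} together with the inclusion $\supp\hat{\psi}_\delta \subseteq [-2a-\delta,2a+\delta]^2 \subset (-\tfrac{1}{2},\tfrac{1}{2})^2$ eliminates all $k\neq 0$ terms, and Proposition~\ref{prop:f} supplies the two-scale identity that yields $\kappa_{\psi_\delta}=1$ a.e. The only minor differences are bookkeeping: the paper iterates (telescopes) the identity over scales $M\le n\le N$ and lets $M\to-\infty$, $N\to\infty$, whereas you localize to a dyadic fundamental domain in $\Vert\cdot\Vert_\infty$, and the paper witnesses the failure of $C^1$ by the cusps along $\lbrace (2a-\delta+t,2a-\delta)^T : 0\le t\le 2\delta\rbrace$ rather than your interface $\vert x\vert = a-\tfrac{\delta}{2}$ --- both are valid.
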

\begin{proof}
Since
\begin{eqnarray*}
\supp \hat{\psi}_\delta & \subseteq & [-2a-\delta,2a+\delta]^2\\
& \subseteq & \left(-2a-\frac{1}{2}(1-4a),2a+\frac{1}{2}(1-4a)\right)^2 =\left(-\frac{1}{2},\frac{1}{2}\right)^2,
\end{eqnarray*}
for all $n \in \mathbb{N}\cup\{0\}$ and $k \in \mathbb{Z}^2 \backslash \{0\}$,
\begin{equation*}
\hat{\psi}_\delta \left(2^n(\vec{x} + k)\right)\overline{\hat{\psi}_\delta(2^n\vec{x})}=0 \textrm{ a.e.},
\end{equation*}
where $\vec{x} = (x,y)$. In order to utilize Theorem \ref{thm:par}, we would like to show that $\sum_{n \in \mathbb{Z}} \left\vert \hat{\psi}_\delta (2^n\vec{x}) \right\vert^2 =1$  for a.e.~$\vec{x}$.  We will accomplish this by showing that $\hat{\psi}_\delta^2(\vec{x}) + \hat{\psi}_\delta^2(2\vec{x}) = f_{([-2a,2a]^2\backslash[-\frac{a}{2},\frac{a}{2}]^2;\delta,\frac{\delta}{4})}^2(\vec{x}).$  Then it will follow from iteration that
\begin{equation*}
\sum_{n=M}^{N} \left\vert \hat{\psi}_\delta (2^n \vec{x})\right\vert^2 = f^2_{([-2^{1-M}a,2^{1-M}a]^2\backslash[-2^{-N}a,2^{-N}a]^2;2^{-M}\delta,2^{-1-N}\delta)}(\vec{x}),
\end{equation*}
which is $1$ on $[2^{1-M}a+2^{-M}\delta, 2^{1-M}a-2^{-M}\delta]^2\backslash [2^{-N}a+2^{-1-N}\delta, 2^{-N}a-2^{-1-N}\delta]^2$, where
\begin{equation*}
2^{1-M}a-2^{-M}\delta = 2^{-M}(2a-\delta) \rightarrow \infty \quad \textrm{as } M \rightarrow -\infty
\end{equation*}
and
\begin{equation*}
2^{-N}a-2^{-1-N}\delta = 2^{-N}(a-\frac{\delta}{2}) \rightarrow 0 \quad \textrm{as } N \rightarrow \infty
\end{equation*}
So $\sum_{n\in\mathbb{Z}} \left\vert \hat{\psi}_\delta (2^n \vec{x})\right\vert^2 =1 \textrm{ a.e.}$ By symmetry, it will suffice to show that 
\begin{equation*}
\hat{\psi}_\delta^2(\vec{x}) + \hat{\psi}_\delta^2(2\vec{x}) = f_{([-2a,2a]^2\backslash[-\frac{a}{2},\frac{a}{2}]^2;\delta,\frac{\delta}{4})}^2(\vec{x})
\end{equation*}
for positive $x$ and $y$, but this follows immediately from the preceding proposition.

Thus $\mathcal{W}(\psi_\delta)$ is a Parseval frame, but $\hat{\psi}_\delta$ has cusps along $\{(2a-\delta+t,2a-\delta)^T:0\leq t \leq 2\delta\}$, as well as other edges. So $\hat{\psi}_\delta \notin C_c^1(\widehat{\mathbb{R}}^2)$.
\end{proof}

Thus we have found a method to smooth the Parseval frame wavelets $\check{\mathbbm{1}}_{L_a}$ for $0 < a < \frac{1}{4}$, which is analogous to Han's method, but it does not yield Parseval frame wavelets with good temporal decay like Schwartz functions. It seems that this method should generalize to other Parseval frame wavelet sets in $\widehat{\mathbb{R}}^2$ which have piecewise horizontal and vertical boundaries. However, there does not seem to be an easy way to write an explicit formula that works in general. Furthermore, only a relatively small number of Parseval frame wavelet sets have such a boundary. Perhaps not all is lost. Instead of trying to smooth $\mathbbm{1}_K$ for some already chosen Parseval frame wavelet set $K$, we now try to build Schwartz class Parseval frames for $L^2(\mathbb{R}^2)$ directly from the $C_c^\infty$ bump functions over $\widehat{\mathbb{R}}$. 
%%%%%%%%%%%%%%%%%%%%%%%%%
\section{A construction in higher dimensions}\label{sec:construct}
%%%%%%%%%%%%%%%%%%%%%%%%%%
In the preceding work, problems arose around the corners of the boundary of $L_a =[-2a,2a]^2 \backslash [-a,a]^2$ when we tried to smooth $\mathbbm{1}_{L_a}$. What if there were no corners to deal with? For $0 < a <\frac{1}{4}$, we define
\begin{equation*}
f_{([a,2a]\times S^1;\frac{\delta}{2},\delta)}(x,y) = f_{([a,2a];\frac{\delta}{2},\delta)}(\sqrt{x^2 + y^2}),
\end{equation*}
where $f_{([a,2a];\frac{\delta}{2},\delta)}(\cdot)$ is as in Definition \ref{defn:han}.
\begin{prop}
Let $0 < a < \frac{1}{4}$. For any $0 < \delta< \frac{1}{2}\min\{1-4a,a\}$, define $\hat{\psi}_\delta : \mathbb{R}^2 \rightarrow \mathbb{R}$ by $\hat{\psi}_\delta(x,y) = f_{([a,2a];\frac{\delta}{2},\delta)}(\sqrt{x^2 + y^2})$. Then, $\hat{\psi}\in\mathscr{S}(\mathbb{R}^2)$ and $\mathcal{W}(\psi_\delta)$ is a Parseval frame for $L^2(\mathbb{R}^2)$.
\end{prop}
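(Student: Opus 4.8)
\noindent\emph{Proof idea.} The plan is to verify the hypotheses of Theorem~\ref{thm:par} directly, exploiting the radial form of $\hat{\psi}_\delta$. Put $A_\delta = \{ \vec{x} \in \widehat{\mathbb{R}}^2 : a - \frac{\delta}{2} \leq \Vert \vec{x} \Vert \leq 2a + \delta \}$, the closed annulus on which $\hat{\psi}_\delta$ is supported. Since $\delta < \frac{1}{2}(1-4a)$ we have $2a + \delta < \frac{1}{2}$, so $A_\delta$ lies in the open disc of radius $\frac{1}{2}$ and every element of $A_\delta - A_\delta$ has norm $\leq 4a + 2\delta < 1$. I would first use this to see that for every $k \in \mathbb{Z}^2 \setminus \{0\}$ and every $n \in \mathbb{Z}$ the product $\hat{\psi}_\delta(2^n\vec{x})\,\overline{\hat{\psi}_\delta(2^n\vec{x} + k)}$ is identically zero: a nonzero value would force both $2^n\vec{x}$ and $2^n\vec{x}+k$ into $A_\delta$, hence $k \in A_\delta - A_\delta$, contradicting $\Vert k \Vert \geq 1$. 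Consequently the $k \neq 0$ terms drop out of both $M_{\psi_\delta}$ and $N_{\psi_\delta}$ in Theorem~\ref{thm:par}, and it suffices to show $\kappa_{\psi_\delta}(\vec{x}) = \sum_{n\in\mathbb{Z}} \vert\hat{\psi}_\delta(2^n\vec{x})\vert^2 = 1$ for a.e.\ $\vec{x}$; this gives $M_{\psi_\delta} = N_{\psi_\delta} = 1$, so Theorem~\ref{thm:par} yields that $\mathcal{W}(\psi_\delta)$ is a frame with $1 \leq A \leq B \leq 1$, i.e.\ a Parseval frame.

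The heart of the argument is the identity $\kappa_{\psi_\delta} \equiv 1$ off the origin, which I would establish by a telescoping sum built from Lemma~\ref{lem:han}. Writing $r = \Vert\vec{x}\Vert$, radiality gives $\hat{\psi}_\delta(2^n\vec{x}) = f_{([a,2a];\frac{\delta}{2},\delta)}(2^n r)$, and the scaling part of Lemma~\ref{lem:han} rewrites this as $f_{([2^{-n}a,\,2^{1-n}a];\,2^{-n-1}\delta,\,2^{-n}\delta)}(r)$. The decisive observation---and the reason the asymmetric widths $(\frac{\delta}{2},\delta)$ are exactly the right choice---is that consecutive blocks interlock: the right endpoint and right transition width of the block indexed $n+1$ are $2^{-n}a$ and $2^{-n-1}\delta$, which are precisely the left endpoint and left transition width of the block indexed $n$. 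Hence the addition formula of Lemma~\ref{lem:han} applies repeatedly, and for any $M,N \in \mathbb{N}$,
\begin{equation*}
\sum_{n=-N}^{M} \vert\hat{\psi}_\delta(2^n\vec{x})\vert^2 = f^2_{([2^{-M}a,\,2^{N+1}a];\,2^{-M-1}\delta,\,2^{N}\delta)}(r),
\end{equation*}
and the right-hand side equals $1$ as soon as $2^{-M}(a+\frac{\delta}{2}) \leq r \leq 2^{N}(2a - \delta)$. Since $2a - \delta > 0$, for each fixed $r > 0$ this holds once $M$ and $N$ are large enough; letting $M,N\to\infty$ gives $\kappa_{\psi_\delta}(\vec{x}) = 1$ for every $\vec{x}\neq 0$, hence almost everywhere.

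Finally, to see $\psi_\delta \in \mathscr{S}(\mathbb{R}^2)$, I would note that the profile $f_{([a,2a];\frac{\delta}{2},\delta)}$ is $C^\infty$ on $\mathbb{R}$, being assembled from the $C^\infty$ function $\theta$ of Lemma~\ref{lem:theta}, and is supported in $[a-\frac{\delta}{2},2a+\delta]$, an interval bounded away from $0$ because $a - \frac{\delta}{2} > \frac{3a}{4} > 0$. Hence $\hat{\psi}_\delta(x,y) = f_{([a,2a];\frac{\delta}{2},\delta)}(\sqrt{x^2+y^2})$ is a composition of smooth maps away from the origin and is identically zero on a neighbourhood of the origin, so $\hat{\psi}_\delta \in C_c^\infty(\widehat{\mathbb{R}}^2) \subseteq \mathscr{S}(\widehat{\mathbb{R}}^2)$; since the Fourier transform is a topological automorphism of $\mathscr{S}$, it follows that $\psi_\delta \in \mathscr{S}(\mathbb{R}^2)$. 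The one point demanding care is the bookkeeping in the telescoping step: one must track how the interval endpoints and the two transition widths scale under each dyadic dilation to confirm that neighbouring blocks share a common endpoint with matching transition width, so that Lemma~\ref{lem:han} can be invoked block by block and the sum collapses cleanly. Once that is in place, everything else is routine.
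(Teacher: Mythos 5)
Your proof is correct and follows essentially the same route as the paper: the cross terms vanish because $\supp\hat{\psi}_\delta$ lies in a ball of radius less than $\frac{1}{2}$, and radiality reduces the claim $\kappa_{\psi_\delta}\equiv 1$ off the origin to the one-dimensional dyadic identity for $f_{([a,2a];\frac{\delta}{2},\delta)}$. The only difference is that you re-derive that one-dimensional identity explicitly by telescoping with Lemma~\ref{lem:han} (with correct matching of endpoints and transition widths), whereas the paper simply cites it as known from the one-dimensional results.
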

\begin{proof}
By construction, $\hat{\psi}_\delta \in C_c^\infty(\widehat{\mathbb{R}}^2) \Rightarrow \psi_\delta \in \mathscr{S}(\mathbb{R}^2)$. Since $\delta < \frac{1}{2}(1 -4a)$, $\Delta(\supp \hat{\psi}_\delta) <0$. So for all $n \in \mathbb{N}\cup\{0\}$ and $k \in \mathbb{Z} \backslash \{0\}$,
\begin{equation*}
\hat{\psi}_\delta(2^n(\vec{x}+k))\overline{\hat{\psi}(2^n\vec{x}}) = 0 \textrm{ a.e.}
\end{equation*}
where $\vec{x} = (x,y)^T$. Hence, in order to prove that $\mathcal{W}(\psi_\delta)$ is a Parseval frame, it suffices to show that $\sum_{n\in\mathbb{Z}}\left\vert \hat{\psi}_\delta (2^n \vec{x}) \right\vert^2 = 1$ a.e. We compute
\begin{eqnarray}
\sum_{n\in\mathbb{Z}} \left\vert \hat{\psi} (2^n \vec{x}) \right\vert^2 & = & \sum_{n\in\mathbb{Z}} \left\vert f_{([a,2a];\frac{\delta}{2},\delta)} \left( \sqrt{(2^nx)^2 + (2^ny)^2} \right) \right\vert^2 \nonumber \\
& = & \sum_{n\in\mathbb{Z}} \left\vert f_{([a,2a];\frac{\delta}{2},\delta)} (2^n z) \right\vert^2 \textrm{ for $z=\sqrt{x^2+y^2}$}  \label{eqn:2}
\end{eqnarray}
We know that (\ref{eqn:2}) $=1$ for almost all non-negative $z$, specifically for $z>0$. So $\sum_{n\in\mathbb{Z}} \left\vert \hat{\psi}_\delta (2^n \hat{x})\right\vert^2 =1$ for $\widehat{\mathbb{R}}^2 \ni \vec{x} \neq 0$. Thus $\mathcal{W}(\psi_\delta)$ is a Parseval frame for $L^2(\mathbb{R}^2)$.
\end{proof}
This result and proof generalize to $\mathbb{R}^d$, $d >2$.
\begin{cor}
Let $0 < a < \frac{1}{4}$. For any $0 < \delta <\frac{1}{2}\min\{1-4a,a\}$, define $\hat{\psi}_\delta : \mathbb{R}^d \rightarrow \mathbb{R}$ by $\hat{\psi}_\delta(x) = f_{([a,2a];\frac{\delta}{2},\delta)}(\Vert x \Vert)$. Then, $\hat{\psi}\in\mathscr{S}(\mathbb{R}^d)$ and $\mathcal{W}(\psi_\delta)$ is a Parseval frame for $L^2(\mathbb{R}^d)$.
\end{cor}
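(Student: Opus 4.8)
The plan is to transcribe, essentially verbatim, the proof of the $d=2$ proposition, checking only that nothing there was genuinely two-dimensional. First I would record that $f := f_{([a,2a];\frac{\delta}{2},\delta)}\in C_c^\infty(\mathbb{R})$ (it is built from the function $\theta$ of Lemma~\ref{lem:theta}), with $\supp f\subseteq[a-\frac{\delta}{2},\,2a+\delta]$, and in particular $0\notin\supp f$. Since $x\mapsto\Vert x\Vert$ is $C^\infty$ on $\widehat{\mathbb{R}}^d\setminus\{0\}$ and $\hat\psi_\delta$ vanishes identically on a neighborhood of the origin, the composition $\hat\psi_\delta(x)=f(\Vert x\Vert)$ is $C^\infty$ on all of $\widehat{\mathbb{R}}^d$; it is supported in the closed annulus $\{a-\frac{\delta}{2}\le\Vert x\Vert\le 2a+\delta\}$, so $\hat\psi_\delta\in C_c^\infty(\widehat{\mathbb{R}}^d)$. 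Because $\mathcal{F}$ is a topological automorphism of $\mathscr{S}$ and $C_c^\infty\subseteq\mathscr{S}$, it follows that $\psi_\delta=\mathcal{F}^{-1}\hat\psi_\delta\in\mathscr{S}(\mathbb{R}^d)$.

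Next I would dispose of the off-diagonal terms so that Theorem~\ref{thm:par} applies. From $\delta<\frac{1}{2}(1-4a)$ we get $2a+\delta<\frac{1}{2}$, so $\supp\hat\psi_\delta$ lies in the open ball of radius $\frac{1}{2}$ about the origin; hence for every $k\in\mathbb{Z}^d\setminus\{0\}$ (so $\Vert k\Vert\ge 1$) the sets $\supp\hat\psi_\delta$ and $\supp\hat\psi_\delta+k$ are disjoint, indeed $\Delta(\supp\hat\psi_\delta)\ge 1-(4a+2\delta)>0$. Consequently $\hat\psi_\delta(2^n\gamma)\,\overline{\hat\psi_\delta(2^n\gamma+k)}=0$ for a.e.\ $\gamma$, for every $n\in\mathbb{Z}$ and every $k\ne 0$, so all the $k\ne 0$ contributions to $M_{\psi_\delta}$ and to the subtracted sum in $N_{\psi_\delta}$ vanish. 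Applying Theorem~\ref{thm:par} with its free scale parameter taken to be our $a$, this leaves $M_{\psi_\delta}=\operatorname{ess sup}_{a\le\Vert\gamma\Vert\le 2a}\kappa_{\psi_\delta}(\gamma)$ and $N_{\psi_\delta}=\operatorname{ess inf}_{a\le\Vert\gamma\Vert\le 2a}\kappa_{\psi_\delta}(\gamma)$, so it suffices to show $\kappa_{\psi_\delta}(\gamma)=1$ for a.e.\ $\gamma$: then $M_{\psi_\delta}=N_{\psi_\delta}=1$ and the theorem forces $1\le A\le B\le 1$, i.e.\ $\mathcal{W}(\psi_\delta)$ is Parseval.

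Finally, the radial reduction. For $\gamma\ne 0$, writing $z=\Vert\gamma\Vert>0$,
\begin{equation*}
\kappa_{\psi_\delta}(\gamma)=\sum_{n\in\mathbb{Z}}\big|f_{([a,2a];\frac{\delta}{2},\delta)}(2^n z)\big|^2,
\end{equation*}
which is exactly the one-dimensional sum already treated in the proof of the preceding $d=2$ proposition (equivalently, the single-interval instance of Han's construction). Concretely, I would iterate the two identities of Lemma~\ref{lem:han} — the dilation rule together with $f^2_{([a,b];\delta_1,\delta_2)}+f^2_{([b,c];\delta_2,\delta_3)}=f^2_{([a,c];\delta_1,\delta_3)}$, whose boundary paddings match at each dyadic junction — to telescope the partial sum $\sum_{n=M}^N$ into the single bump $f^2_{([2^{-N}a,\,2^{1-M}a];\,2^{-N-1}\delta,\,2^{-M}\delta)}(z)$, which equals $1$ on $[2^{-N}(a+\frac{\delta}{2}),\,2^{-M}(2a-\delta)]$. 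Since $2a-\delta>0$, this interval exhausts $(0,\infty)$ as $M\to-\infty$ and $N\to\infty$; hence the sum is $1$ for every $z>0$, so $\kappa_{\psi_\delta}\equiv 1$ away from the origin, completing the proof.

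I do not expect a genuine obstacle. Every ingredient — the $C^\infty$ regularity of $f\circ\Vert\cdot\Vert$ (which is where the non-smoothness of the norm could have caused trouble, but does not, because $f$ vanishes near $0$), the ball-containment that kills the cross terms, and the telescoping identity — is independent of $d$, and Theorem~\ref{thm:par} is already stated for arbitrary $d$; the only point worth a moment's care is the smoothness at the origin, and once that is noted the argument is purely dimension-free.
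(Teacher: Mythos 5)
Your proposal is correct and follows essentially the same route as the paper, whose proof of the corollary simply repeats the $d=2$ argument: $\hat\psi_\delta\in C_c^\infty$ gives $\psi_\delta\in\mathscr{S}$, the support sits in a ball of radius $2a+\delta<\tfrac12$ so the $k\neq 0$ terms vanish and Theorem~\ref{thm:par} reduces everything to $\kappa_{\psi_\delta}\equiv 1$, which follows from the radial substitution $z=\Vert\gamma\Vert$ and the one-dimensional telescoping of Lemma~\ref{lem:han}. Your explicit remarks on smoothness at the origin and on $\Delta(\supp\hat\psi_\delta)>0$ are details the paper leaves implicit, not a different method.
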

\begin{proof}
The proof is as above.
\end{proof}
We now have Schwartz class Parseval frames for $L^2(\mathbb{R}^d)$, $d>1$ which are elementary to describe.
%%%%%%%%%%%%%%%%%%%%%%%%%
\section{Partitions of unity}\label{sec:parti}
%%%%%%%%%%%%%%%%%%%%%%%%%%
We now switch gears slightly and consider a class of well-known functions in $\mathcal{S}(\mathbb{R})$.  $C^\infty$ partitions of unity are important tools in analysis and differential topology.  While the topic of $C^\infty$ partitions of unity is outside the scope of this paper, we shall utilize a class of functions which is commonly used in conjunction with that subject, e.g.: \cite{Lee}. 
\begin{defn}
Let $f: \widehat{\mathbb{R}} \rightarrow \mathbb{R}$ be the function $f(\gamma) = e^{-\frac{1}{\gamma}} \mathbbm{1}_{(0,\infty)}$.  Also, let $b,m>0$ be such that $b - \frac{1}{m} > 0$. Define $\hat{\varphi} \in C_c^\infty(\widehat{\mathbb{R}})$ as
\begin{equation*}
\hat{\varphi}_{b,m} (\gamma) = \frac{f(b + \frac{1}{m} - |\gamma|)}{f(b + \frac{1}{m} - |\gamma|)+f(|\gamma| - b + \frac{1}{m})}.
\end{equation*}
If $a > \frac{4}{m}$ is clear from the context, we shall write $\hat{\varphi}_m = \hat{\varphi}_{\frac{a}{4},m}$.
\end{defn}
$\hat{\varphi}_{b,m}$ is a smooth function which takes the value $1$ on the disk $\vert x \vert < b-\frac{1}{m}$ and the value $0$ outside the disk $\vert x \vert < b+\frac{1}{m}$.  We shall now prove that the function is actually monotonic over the positive reals.
\begin{lem}\label{lem:incr}
Fix $b - \frac{1}{m} > 0$. Then $\hat{\varphi}_{b,m}$ is increasing over $(-\infty,0)$ and decreasing over $(0,\infty)$.
\end{lem}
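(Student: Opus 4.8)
The plan is to reduce the statement to a one-dimensional monotonicity question about the scalar profile function, since $\hat\varphi_{b,m}(\gamma)$ depends on $\gamma$ only through $|\gamma|$. Concretely, set $r = |\gamma|$ and define $g(r) = f(b+\tfrac1m - r)/\bigl(f(b+\tfrac1m-r)+f(r-b+\tfrac1m)\bigr)$ for $r \ge 0$, where $f(t) = e^{-1/t}\mathbbm{1}_{(0,\infty)}(t)$. Then $\hat\varphi_{b,m}(\gamma) = g(|\gamma|)$, so it suffices to show $g$ is (weakly) decreasing on $(0,\infty)$; the claim on $(-\infty,0)$ follows by the evenness of $\gamma \mapsto |\gamma|$.

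First I would dispose of the trivial ranges: for $0 < r \le b - \tfrac1m$ we have $r - b + \tfrac1m \le 0$, so the second term in the denominator vanishes and $g(r) = 1$; for $r \ge b + \tfrac1m$ we have $b + \tfrac1m - r \le 0$, so the numerator vanishes and $g(r) = 0$. On these two intervals $g$ is constant, hence weakly decreasing, and it only remains to treat the transition interval $r \in (b - \tfrac1m,\, b + \tfrac1m)$, on which both $f(b+\tfrac1m-r)$ and $f(r-b+\tfrac1m)$ are strictly positive and smooth. There, write $u = b + \tfrac1m - r$ and $v = r - b + \tfrac1m$, so $u, v > 0$, $u + v = \tfrac2m$ is constant, and $u$ is strictly decreasing in $r$ while $v$ is strictly increasing. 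Then $g = f(u)/(f(u)+f(v)) = 1/(1 + f(v)/f(u))$, so $g$ is decreasing in $r$ precisely when the ratio $\rho(r) := f(v)/f(u) = e^{-1/v + 1/u}$ is increasing in $r$. Since $e^{(\cdot)}$ is increasing, this amounts to showing $\tfrac1u - \tfrac1v$ is increasing in $r$; but as $r$ increases, $u$ decreases so $\tfrac1u$ increases, and $v$ increases so $\tfrac1v$ decreases, hence $\tfrac1u - \tfrac1v$ is strictly increasing. Therefore $g$ is strictly decreasing on the transition interval.

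Finally I would glue the three pieces together: $g$ equals $1$ on $(0, b-\tfrac1m]$, strictly decreases from $1$ to $0$ on $(b-\tfrac1m, b+\tfrac1m)$, and equals $0$ on $[b+\tfrac1m,\infty)$, so $g$ is (weakly) decreasing on all of $(0,\infty)$ and strictly decreasing on the transition interval. Pulling back through $\gamma \mapsto |\gamma|$ gives that $\hat\varphi_{b,m}$ is decreasing on $(0,\infty)$ and, by symmetry, increasing on $(-\infty, 0)$, which is the assertion. I do not expect a serious obstacle here: the only point requiring a little care is confirming that the substitution is legitimate (that $u + v$ is constant and that both are positive on the open transition interval, so no division by zero occurs), and that the monotonicity statement intended is the weak one across the whole half-line rather than strict monotonicity everywhere.
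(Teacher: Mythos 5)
Your proposal is correct. Where the paper argues by brute force—differentiating the quotient with the quotient rule for $\gamma>0$ and observing that the numerator is $-\bigl(f(\gamma-b+\tfrac1m)f'(b+\tfrac1m-\gamma)+f(b+\tfrac1m-\gamma)f'(\gamma-b+\tfrac1m)\bigr)\le 0$ because both $f\ge 0$ and $f'=\tfrac{1}{\gamma^2}e^{-1/\gamma}\mathbbm{1}_{(0,\infty)}\ge 0$, then invoking evenness for the negative half-line—you instead avoid differentiating the quotient altogether: you split the half-line into the two flat regions where one of the two $f$-terms vanishes (giving the constant values $1$ and $0$) and, on the transition interval, rewrite the function as $1/\bigl(1+e^{1/u-1/v}\bigr)$ with $u=b+\tfrac1m-\gamma$, $v=\gamma-b+\tfrac1m$, reducing everything to the obvious strict monotonicity of $1/u-1/v$. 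Both arguments are sound and prove the same (weak) monotonicity the lemma asserts; the paper's is shorter and gives the sign of $\hat\varphi_{b,m}'$ in one line, while yours is derivative-free, makes the degenerate regions and the positivity of the denominator explicit, and yields strict decrease on the transition interval as a bonus. Your only care points—that $u+v=\tfrac2m$ is constant, that $u,v>0$ on the open transition interval, and that the gluing of the three pieces gives weak monotonicity globally—are exactly the right ones and are handled correctly.
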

\begin{proof}
Let $\gamma > 0$.  We calculate $\hat{\varphi}_{b,m}' (\gamma)$.  The numerator is 
\begin{eqnarray*}
&& \left(f(b + \frac{1}{m} - \gamma) + f(\gamma - b + \frac{1}{m})\right)\left(-f'(b + \frac{1}{m} - \gamma)\right) \\
&&  -  f(b + \frac{1}{m} - \gamma)\left(-f'(b + \frac{1}{m} - \gamma) + f'(\gamma - b + \frac{1}{m})\right),
\end{eqnarray*}
while the denominator is
\begin{equation*}
\left(f(b + \frac{1}{m} - \gamma) + f(\gamma - b + \frac{1}{m})\right)^2,
\end{equation*}
and thus
%\begin{equation*}
%= \frac{\left(f(b + \frac{1}{m} - \gamma) + f(\gamma - b + \frac{1}{m})\right)\left(-f'(b + \frac{1}{m} - \gamma)\right) - f(b + \frac{1}{m} - \gamma)\left(-f'(b + \frac{1}{m} - \gamma) + f'(\gamma - b + \frac{1}{m})\right)}{\left(f(b + \frac{1}{m} - \gamma) + f(\gamma - b + \frac{1}{m})\right)^2} 
%\end{equation*}
\begin{equation*}
\hat{\varphi}_{b,m}' (\gamma)= \frac{-\left(f(\gamma - b + \frac{1}{m}))f'(b + \frac{1}{m} - \gamma) + f(b + \frac{1}{m} - \gamma)f'(\gamma - b + \frac{1}{m})\right)}{\left(f(b + \frac{1}{m} - \gamma) + f(\gamma - b + \frac{1}{m})\right)^2}. 
\end{equation*}
For all $\gamma \in \widehat{\mathbb{R}}$, $f(\gamma) \geq 0$ and $f'(\gamma) = \frac{1}{\gamma^2}e^{-\frac{1}{\gamma}}\mathbbm{1}_{(0,\infty)} \geq 0$.  Hence for $\gamma \geq 0$, $\hat{\varphi}_{b,m}'(\gamma) \leq 0$.  Since $\hat{\varphi}_{b,m}$ is even, this implies that $\hat{\varphi}_{b,m}'(\gamma) \geq 0$ for $\gamma \leq 0$.
\end{proof}
\begin{thm}\label{thm:bump}
Let $0 < \alpha < \frac{1}{2}$ and $m > \max \{ \frac{6}{a}, \frac{2}{1-2a} \}$.  Define 
\begin{equation*}
\hat{\psi}_m = \hat{\varphi}_m (\gamma + \frac{3a}{4}) + \hat{\varphi}_m(\gamma - \frac{3a}{4}) \in C_c^\infty (\widehat{\mathbb{R}}).
\end{equation*}
Then $\psi_m \in \mathcal{S} (\mathbb{R})$ and $\mathcal{W}(\psi_m)$ forms a frame with bounds $A_m $ and $B_m $.  For all $m$, $A_m \leq \frac{1}{2}$, but as $m \rightarrow \infty$,  $B_m \rightarrow 1$.  
\end{thm}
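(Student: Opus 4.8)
The plan is to apply Theorem~\ref{thm:par} and reduce everything to the function $\kappa_{\psi_m}(\gamma)=\sum_{n\in\mathbb{Z}}|\hat\psi_m(2^n\gamma)|^2$. I begin with the elementary structure of $\hat\psi_m$: it is even, lies in $C_c^\infty(\widehat{\mathbb{R}})$ (a sum of translates of $\hat\varphi_m=\hat\varphi_{a/4,m}$), and $\supp\hat\psi_m=[-a-\tfrac1m,-\tfrac a2+\tfrac1m]\cup[\tfrac a2-\tfrac1m,a+\tfrac1m]$, which by $m>\tfrac2{1-2a}$ lies inside $(-\tfrac12,\tfrac12)$ and by $m>\tfrac6a$ is bounded away from $0$ with diameter $<1$. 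Since the support has diameter $<1$, $\hat\psi_m(2^n\gamma)\overline{\hat\psi_m(2^n\gamma+k)}=0$ a.e.\ for all $k\ne0$ and all $n$, so the cross terms in Theorem~\ref{thm:par} vanish, giving $M_{\psi_m}=\overline K_{\psi_m}$ and $N_{\psi_m}=\underline K_{\psi_m}$. Also $\kappa_{\psi_m}$ is continuous on $\widehat{\mathbb{R}}\setminus\{0\}$ (locally a finite sum of continuous functions) and invariant under $\gamma\mapsto2\gamma$, hence determined by its restriction to the compact set $\{\tfrac a2\le|\gamma|\le a\}$; since consecutive dyadic dilates of $[\tfrac a2-\tfrac1m,a+\tfrac1m]$ overlap, $\bigcup_n 2^n\supp\hat\psi_m=\widehat{\mathbb{R}}\setminus\{0\}$, so $0<\underline K_{\psi_m}\le\overline K_{\psi_m}<\infty$. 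Theorem~\ref{thm:par} then shows $\mathcal{W}(\psi_m)$ is a frame with $A_m=\operatorname{ess inf}\kappa_{\psi_m}$, $B_m=\operatorname{ess sup}\kappa_{\psi_m}$, and $\psi_m\in\mathscr{S}(\mathbb{R})$ because $\hat\psi_m\in C_c^\infty$.

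For $A_m\le\tfrac12$ I evaluate at $\gamma=\tfrac a2$. The defining formula for $\hat\varphi_{b,m}$ yields $\hat\varphi_{b,m}(b+s)+\hat\varphi_{b,m}(b-s)=1$ near the transition point $b=\tfrac a4$, so $\hat\psi_m(\tfrac a2)=\hat\varphi_m(-\tfrac a4)=\tfrac12$ and $\hat\psi_m(a)=\hat\varphi_m(\tfrac a4)=\tfrac12$, while $2^n\cdot\tfrac a2\notin\supp\hat\psi_m$ for $n\ne0,1$. Hence $\kappa_{\psi_m}(\tfrac a2)=\tfrac12$, and continuity of $\kappa_{\psi_m}$ forces $A_m=\operatorname{ess inf}\kappa_{\psi_m}\le\tfrac12$ for every admissible $m$.

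For $B_m\to1$: first $B_m\ge1$, since on $[\tfrac a2+\tfrac1m,a-\tfrac2m]$ (nonempty as $m>\tfrac6a$) one has $\hat\psi_m\equiv1$ and no other dyadic scale meets $\supp\hat\psi_m$, so $\kappa_{\psi_m}\equiv1$ there. For the matching upper bound, the endpoint ratio of $[\tfrac a2-\tfrac1m,a+\tfrac1m]$ is $<4$, so at most two consecutive terms of $\kappa_{\psi_m}$ are ever nonzero; on the fundamental domain this forces $\kappa_{\psi_m}\le1$ except on a small band where $\kappa_{\psi_m}(\gamma)=\hat\psi_m(\xi)^2+\hat\psi_m(2\xi)^2$ with $\xi\in[\tfrac a2-\tfrac1m,\tfrac a2+\tfrac1{2m}]$. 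Splitting this band: where $2\xi$ still lies in the plateau one has $\hat\psi_m(2\xi)=1$ while $\hat\psi_m(\xi)$ is $\hat\varphi_m$ evaluated at least $\tfrac1{2m}$ past the point where it equals $\tfrac12$, so $\hat\psi_m(\xi)\le\frac{f(1/(2m))}{f(1/(2m))+f(3/(2m))}=\frac1{1+e^{4m/3}}$ by the extreme flatness of $f(\gamma)=e^{-1/\gamma}$ near $0$, and $\kappa_{\psi_m}\le1+(1+e^{4m/3})^{-2}$ there; where instead both $\xi$ and $2\xi$ lie in transition zones, one rewrites everything through $r(w)=\frac{f(1/m-w)}{f(1/m-w)+f(1/m+w)}$, which satisfies $r(w)+r(-w)=1$, is decreasing (Lemma~\ref{lem:incr}), and has $\frac{r(w)}{1-r(w)}=\exp\!\big(-\tfrac{2w}{m^{-2}-w^2}\big)$, reducing the problem to $\sup_{0\le\tau\le1/(2m)}\big(r(\tau)^2+(1-r(2\tau))^2\big)\to1$. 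This last one-variable estimate is the real obstacle: because $r$ saturates superexponentially the supremum is $1+o(1)$, and I would prove it by comparing $r(2\tau)$ with $r(\tau)$ via the displayed identity and splitting the $\tau$-range at a scale such as $m^{-2/3}$ (using $e^{-x}-e^{-y}\le y-x$ for small arguments and $(1+e^t)^{-2}\le(1+e^{2t})^{-1}$ for large ones). Combining the cases gives $B_m=1+o(1)$, so $B_m\to1$.
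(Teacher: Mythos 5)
Your overall skeleton is the same as the paper's: reduce everything to $\kappa_{\psi_m}$, identify the optimal bounds with $\operatorname{ess\,inf}\kappa_{\psi_m}$ and $\operatorname{ess\,sup}\kappa_{\psi_m}$ (you do this via Theorem~\ref{thm:par} with vanishing cross terms, the paper via Theorem~\ref{thm:shrink}; both are fine), get $A_m\le\tfrac12$ from the value $\tfrac12$ at the overlap point (your $\gamma=\tfrac a2$ is the dyadic dilate of the paper's $\gamma=a$, where it computes $\kappa_{\psi_m}(a)=\tfrac14+\tfrac14$), get $B_m\ge 1$ from the plateau, and control the overshoot near the plateau edge, where your bound $1+(1+e^{4m/3})^{-2}$ coincides with the paper's computed value $\kappa_{\psi_m}(a-\tfrac1m)$. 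Up to that point your argument checks out.

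The gap is in the last step, which you yourself flag as ``the real obstacle'': the claim $\sup_{0\le\tau\le 1/(2m)}\bigl(r(\tau)^2+(1-r(2\tau))^2\bigr)\to 1$ is asserted but not proved, and the one concrete detail you offer does not work as stated, since the entire $\tau$-range is $[0,\tfrac1{2m}]$, so ``splitting at a scale such as $m^{-2/3}$'' is vacuous for large $m$. The claim is true and your two named inequalities do suffice, but the split has to be taken where the exponent $2\tau/(m^{-2}-\tau^2)$ is of intermediate size (e.g.\ at $\tau\sim m^{-3/2}$): for larger $\tau$ the term $r(\tau)^2\le(1+e^{2\tau/(m^{-2}-\tau^2)})^{-2}$ is superexponentially small, while for smaller $\tau$ one compares exponents, $\frac{4\tau}{m^{-2}-4\tau^2}-\frac{4\tau}{m^{-2}-\tau^2}=O(m^4\tau^3)=o(1)$, and uses the Lipschitz-type bound to get $(1+e^{2A})^{-1}-(1+e^{B})^{-1}=o(1)$. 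A second, smaller omission: your reduction silently drops the half of the overlap band with $\xi\in(\tfrac a2,\tfrac a2+\tfrac1{2m}]$, where $\kappa_{\psi_m}=(1-r(w))^2+r(2w)^2$; there $r(2w)\le r(w)$ (monotonicity, Lemma~\ref{lem:incr}) gives $\kappa_{\psi_m}\le(1-r(w))^2+r(w)^2\le 1$, so it is a one-line fix, but it is needed before the supremum over the band equals your one-sided supremum. By contrast, the paper avoids this bookkeeping by substituting $\gamma=a+\tfrac tm$, obtaining the closed form $\kappa_{\psi_m}=(1+e^{2mt/(1-t^2)})^{-2}+(1+e^{-4mt/(4-t^2)})^{-2}$ on the whole transition interval, and proving uniform convergence to $1$ on $[\delta,1-\delta]$ and $[-1+\delta,-\delta]$ with explicit thresholds on $m$, combined with the endpoint values; to be complete your write-up must supply the quantitative estimate in some such form.
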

\begin{proof}
As $m > \frac{4}{a}$, $\hat{\varphi}_m$ is well-defined, and it follows from the definition of $\hat{\varphi}_m$ that
\begin{equation*}
\supp \hat{\psi}_m = [-a- \frac{1}{m}, -\frac{a}{2} + \frac{1}{m}] \cup [\frac{a}{2} - \frac{1}{m}, a + \frac{1}{m}].
\end{equation*}
Since $m > \frac{2}{1-2a}$, $\supp \hat{\psi}_m \subset ( -\frac{1}{2}, \frac{1}{2})$.  By continuity, there exists $\epsilon > 0$ such that $[-a,-\frac{a}{2}) \cup [\frac{a}{2}, a) \subseteq\supp_\epsilon \hat{\psi}_m$.  Thus, it follows from Theorem \ref{thm:shrink} that $\mathcal{W} (\psi_m)$ forms a frame with lower frame bound $A_m = \underline{K}_{\psi_m}$ and upper frame bound $B_m =   \overline{K}_{\psi_m}$.
As $\hat{\psi}_m$ is even, it suffices to optimize $\kappa_{\psi_m}$ over any positive dyadic interval.  We shall use $[ \frac{a}{2} + \frac{1}{2},  a + \frac{1}{m} )$.  Since $m > \frac{5}{a}$, $\kappa_{\psi_m} (\gamma) = (\hat{\psi}_m (\gamma))^2  + (\hat{\psi}_m (\frac{\gamma}{2}))^2$ for $\gamma \in [\frac{a}{2} + \frac{1}{2m}, a + \frac{1}{m})$.  Also $m > \frac{6}{a}$ implies that $a - \frac{2}{m} > \frac{a}{2} + \frac{1}{m}$.  Hence, over $[ \frac{a}{2} + \frac{1}{2m}, a + \frac{1}{m})$, $\kappa_{\psi_m}$
\begin{eqnarray*}
 & = & (\hat{\psi}_m (\gamma) )^2 + (\hat{\psi}_m (\frac{\gamma}{2}))^2 \\
& = & \left\{ \begin{array}{lr}
(\hat{\psi}_m(\gamma))^2 + 0 & \textrm{for $ \frac{a}{2} + \frac{1}{2m} \leq \gamma < \frac{a}{2} + \frac{1}{m}$} \\
1 + 0 & \textrm{for $ \frac{a}{2} + \frac{1}{m} \leq \gamma < a - \frac{2}{m}$}\\
1 + (\hat{\psi}_m (\frac{\gamma}{2}))^2 & \textrm{for $a - \frac{2}{m} \leq \gamma < a - \frac{1}{m}$}\\
(\hat{\psi}_m (\gamma))^2 + (\hat{\psi}_m (\frac{\gamma}{2}))^2 & \textrm{for $a - \frac{1}{m} \leq \gamma < a + \frac{1}{m}$}\\
\end{array} \right. \\
& = & \left\{ \begin{array}{lr}
(\hat{\varphi}_m(\gamma - \frac{3a}{4}))^2 & \textrm{for $ \frac{a}{2} + \frac{1}{2m} \leq \gamma < \frac{a}{2} + \frac{1}{m}$} \\
1 & \textrm{for $ \frac{a}{2} + \frac{1}{m} \leq \gamma < a - \frac{2}{m}$}\\
1 + (\hat{\varphi}_m(\frac{\gamma}{2}- \frac{3a}{4}))^2 & \textrm{for $a - \frac{2}{m} \leq \gamma < a - \frac{1}{m}$}\\
(\hat{\varphi}_m (\gamma - \frac{3a}{4}))^2 + (\hat{\psi} (\frac{\gamma}{2} - \frac{3a}{4}))^2 & \textrm{for $a - \frac{1}{m} \leq \gamma < a + \frac{1}{m}$}\\
\end{array} \right. .
\end{eqnarray*} 
Note that
\begin{itemize}
\item $\gamma - \frac{3a}{4} < 0$ for $\frac{a}{2} + \frac{1}{2m} \leq \gamma < \frac{a}{2} + \frac{1}{m}$ since $m > \frac{4}{a}$,
\item $\frac{\gamma}{2} - \frac{3a}{4} < 0$ for $a - \frac{2}{m} \leq \gamma < a - \frac{1}{m}$,
\item $\gamma - \frac{3a}{4} > 0$ for $a - \frac{1}{m} \leq \gamma < a + \frac{1}{m}$ since $m > \frac{4}{a}$, and 
\item $\frac{\gamma}{2} - \frac{3a}{4} < 0$ for $a - \frac{1}{m} \leq \gamma < a + \frac{1}{m}$ since $m > \frac{2}{a}$.
\end{itemize}
Thus, $\kappa_{\psi_m}$ is increasing over $\frac{a}{2} + \frac{1}{2m} \leq \gamma \leq a - \frac{1}{m}$, but is not monotonic over $a - \frac{1}{m} < \gamma < a + \frac{1}{m}$.  Hence 
\begin{eqnarray*}
\min_{\frac{a}{2} + \frac{1}{2} \leq \gamma \leq a - \frac{1}{m}} \kappa_{\psi_m} (\gamma) & = & \kappa_{\psi_m} (\frac{a}{2} + \frac{1}{2}) =  \left(\hat{\varphi}_m ((\frac{a}{2} + \frac{1}{2m}) - \frac{3a}{4} )\right)^2  \\ 
& = &  \left( \hat{\varphi}_m (-\frac{a}{4} + \frac{1}{2m})\right)^2 =  \left( \frac{e^{-2m/3}}{e^{-2m/3} + e^{-2m}}\right)^2  \\
& = &  \left( \frac{1}{1+e^{-4m/3}}\right)^2,
\end{eqnarray*}
and
\begin{eqnarray*}
\max_{\frac{a}{2} + \frac{1}{2} \leq \gamma \leq a - \frac{1}{m}} \kappa_{\psi_m} (\gamma) & = & \kappa_{\psi_m} (a - \frac{1}{m}) \\
& = & 1 +\left(\hat{\varphi}_m (\frac{1}{2}(a - \frac{1}{m}) - \frac{3a}{4} )\right)^2 \\
& = & 1+  \left( \frac{e^{-2m}}{e^{-2m} + e^{-2m/3}}\right)^2 \\
& = & 1 + \left( \frac{1}{1+e^{4m/3}}\right)^2.
\end{eqnarray*}
Note that as $m \rightarrow \infty$, 
\begin{eqnarray*}
\min_{\frac{a}{2} + \frac{1}{2} \leq \gamma \leq a - \frac{1}{m}} \kappa_{\psi_m} (\gamma) & \rightarrow& 1 \\
\max_{\frac{a}{2} + \frac{1}{2} \leq \gamma \leq a - \frac{1}{m}} \kappa_{\psi_m} (\gamma) & \rightarrow& 1
\end{eqnarray*}
We shall now consider $\kappa_{\psi_m}$ over $(a - \frac{1}{m}, a+ \frac{1}{m})$.  We start by substituting $\gamma = a + \frac{t}{m}$, $t \in (-1,1)$ and expanding $\kappa_{\psi_m}$ over these values:
\begin{eqnarray*}
\kappa_{\psi_m} ( a + \frac{t}{m}) &  =  & \left( \hat{\varphi}_m (a + \frac{t}{m} - \frac{3a}{4})\right)^2  +  \left( \hat{\varphi}_m (\frac{a}{2} + \frac{t}{2m} - \frac{3a}{4})\right)^2 \\
&   =   & \left( \hat{\varphi}_m (\frac{a}{4} + \frac{t}{m})\right)^2  +  \left( \hat{\varphi}_m (-\frac{a}{4} + \frac{t}{2m})\right)^2 \\
&   =   & \left( \frac{e^{-m/(1-t)}}{e^{-m/(1-t)} + e^{-m/(1+t)}} \right)^2  +  \left( \frac{e^{-2m/(2+t)}}{e^{-2m/(2+t)} + e^{-2m/(2-t)}} \right)^2 \\
&   =   & \left( \frac{1}{1+ e^{2mt/(1-t^2)}} \right)^2  +   \left( \frac{1}{1+ e^{-4mt/(4-t^2)}} \right)^2
\end{eqnarray*}
At $t=0$, $\kappa_{\psi_m}$ takes the value $\left(\frac{1}{2}\right)^2 + \left(\frac{1}{2}\right)^2 = \frac{1}{2}$.  We claim that for any $0 < \delta < \frac{1}{2}$, $\kappa_{\psi_m} (a + \frac{t}{m})$ converges uniformly to $1$ over $[\delta, 1-\delta]$.  Choose an arbitrary $0 < \epsilon < 2$.  We claim that for any 
\begin{equation*}
m > \max \left\{ \frac{(1-\delta^2)\ln(\sqrt{\frac{2}{\epsilon}}-1)}{\delta} , \frac{(\delta^2 - 4)\ln(1-\sqrt{1-\frac{\epsilon}{2}})}{4\delta}\right\}
\end{equation*}
$\vert \kappa_{\psi_m} (a + \frac{t}{m}) - 1 \vert < \epsilon$ for all $t \in [\delta, 1-\delta]$.  A routine application of the triangle inequality yields
\begin{eqnarray*}
\vert \kappa_{\psi_m} (a + \frac{t}{m}) - 1 \vert & = & \left\vert \left( \frac{1}{1+ e^{2mt/(1-t^2)}} \right)^2 +  \left( \frac{1}{1+ e^{-4mt/(4-t^2)}} \right)^2 -1 \right\vert \\
& \leq & \left\vert  \left( \frac{1}{1+ e^{2mt/(1-t^2)}} \right)^2 \right\vert  + \left\vert \left( \frac{1}{1+ e^{-4mt/(4-t^2)}} \right)^2 -1 \right\vert. 
\end{eqnarray*}
Since $m >  \frac{(1-\delta^2)\ln(\sqrt{\frac{2}{\epsilon}}-1)}{\delta}$, for all $t  \in [\delta, 1-\delta]$,
\begin{eqnarray*}
&& \sqrt{\frac{2}{\epsilon}} - 1  < e^{2m\delta/(1-\delta^2)} \leq e^{2mt/(1-t^2)} \\
& \Rightarrow & \frac{2}{\epsilon} < (1+e^{2mt/(1-t^2)})^2 \\
& \Rightarrow & \left\vert  \left( \frac{1}{1+ e^{2mt/(1-t^2)}} \right)^2 \right\vert  < \frac{\epsilon}{2}.
\end{eqnarray*}
Since $m > \frac{(\delta^2 - 4)\ln(1-\sqrt{1-\frac{\epsilon}{2}})}{4\delta}$, 
\begin{eqnarray*}
&& 1 - \sqrt{1-\frac{2}{\epsilon}}  > e^{-4m\delta/(4-\delta^2)} \geq e^{-4mt/(4-t^2)} \\
& \Rightarrow & (e^{-4mt/(4-t^2)})^2 -2(e^{-4mt/(4-t^2)}) + \frac{\epsilon}{2} > 0 \\
& \Rightarrow & \vert (e^{-4mt/(4-t^2)})^2 -2(e^{-4mt/(4-t^2)}) \vert < \frac{\epsilon}{2} \\
& \Rightarrow & \vert  1- (1 + e^{-4mt/(4-t^2)})^2 \vert < \frac{\epsilon}{2} \\
& \Rightarrow & \left\vert  \frac{1- (1 + e^{-4mt/(4-t^2)})^2}{(1 + e^{-4mt/(4-t^2)})^2} \right\vert < \frac{\epsilon}{2} \\
& \Rightarrow & \left\vert  \frac{1}{(1 + e^{-4mt/(4-t^2)})^2} - 1\right\vert < \frac{\epsilon}{2} .
\end{eqnarray*}
Thus, $\vert \kappa_{\psi_m} (a + \frac{t}{m}) - 1 \vert < \epsilon$ for all $t \in [\delta, 1-\delta]$.  It is also true that for any $0 < \delta < \frac{1}{2}$, $\kappa_{\psi_m} (a + \frac{t}{m})$ converges uniformly to $1$ over $[-1+ \delta, -\delta]$.  The proof works in the same manner, except the triangle inequality is used in the following way
\begin{eqnarray*}
\vert \kappa_{\psi_m} (a + \frac{t}{m}) - 1 \vert &  =   & \left\vert \left( \frac{1}{1+ e^{2mt/(1-t^2)}} \right)^2 +  \left( \frac{1}{1+ e^{-4mt/(4-t^2)}} \right)^2 -1 \right\vert \\
&  \leq  & \left\vert  \left( \frac{1}{1+ e^{2mt/(1-t^2)}} \right)^2 -1 \right\vert  + \left\vert \left( \frac{1}{1+ e^{-4mt/(4-t^2)}} \right)^2 \right\vert. 
\end{eqnarray*}
Combining this convergence with our knowledge of the values $\kappa_{\psi_m}(0) = \frac{1}{2}$, $\kappa_{\psi_m} (a + \frac{1}{m})= \left( \frac{1}{1+e^{-4m/3}}\right)^2$, and $\kappa_{\psi_m} (a - \frac{1}{m})= 1 + \left( \frac{1}{1+e^{4m/3}}\right)^2$, we conclude that
\begin{equation*}
\lim_{m \rightarrow \infty} B_m = \lim_{m \rightarrow \infty} \overline{K}_{\psi_m} = 1.
\end{equation*}
\end{proof}
Using a result from \cite{BenKing09}, a corollary to Theorem~\ref{thm:bump} is that these smooth bump functions cannot be the result of convolutional smoothing with an approximate identity formed by dilations.  We reference the $1$-dimensional version of the theorem here.
\begin{thm} \label{thm:big}
For $0 < a < 1/2$, let $L \subseteq \widehat{\mathbb{R}}$ be the Parseval frame wavelet set $[-a,-\frac{a}{2}] \cup[\frac{a}{2},a]$.  Also let $g: \widehat{\mathbb{R}} \rightarrow \mathbb{R}$ satisfy the following conditions:
\begin{itemize}
\item[\textit{i.}] $\supp g \subseteq [-b,c]$, where $b, c >0$ and $\supp g$ contains a neighborhood of $0$;
\item[\textit{ii.}] $\int g(\gamma) d \gamma = 1$; and
\item[\textit{iii.}] $\displaystyle{0 < \int_{ [\frac{c}{2},{c}]} g(\gamma) d\gamma < 1}$ and $\displaystyle{0 < \int_{[-\frac{b}{2},{c}]} g(\gamma) d\gamma < 1}$.
\end{itemize}
Define $\hat{\psi}_{m} = \mathbbm{1}_L \ast g_{(m)}$.  For any
\begin{equation*}
m >  \max \bigg\lbrace \frac{2(b + c)}{a}, \frac{b + c}{1-2a}, \frac{4b + c}{a}, \frac{4c_i + b_i}{a} \bigg\rbrace ,
\end{equation*}
$\mathcal{W}(\psi_m)$ is a frame with frame bounds $A_m$ and $B_m$, and there exist $\alpha < 1$ and $\beta > 1$, both independent of $m$, such that $A_m \leq \alpha$ and $B_m \geq \beta$. These are called \emph{frame bound gaps}.
\end{thm}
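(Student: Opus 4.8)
The plan is to apply Theorem~\ref{thm:shrink}, which turns $A_m$ and $B_m$ into the essential infimum and supremum of $\kappa_{\psi_m}$, and then to compute $\kappa_{\psi_m}$ explicitly on a shrinking neighbourhood of the point $a/2$, where the dyadic tiles $2^{-n}[\tfrac a2,a]$ of the positive half-line abut. The deviation of $\kappa_{\psi_m}$ from $1$ there is controlled by the cumulative distribution $G(t):=\int_{-\infty}^{t}g(u)\,du$ of $g$, and condition (iii) is exactly the statement that $G(-b/2),G(c/2)\in(0,1)$, which is what prevents the gap from closing.

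First I would record the shape of $\hat\psi_m=\mathbbm{1}_L\ast g_{(m)}$ near an endpoint of $L$: $G$ is continuous with $G\equiv 0$ on $(-\infty,-b]$ and $G\equiv 1$ on $[c,\infty)$, and a single change of variables in the convolution integral gives, for $m$ beyond the stated threshold,
\begin{equation*}
\hat\psi_m\Bigl(\tfrac a2+\tfrac tm\Bigr)=G(t)\qquad\text{and}\qquad \hat\psi_m\Bigl(a+\tfrac tm\Bigr)=1-G(t)\qquad(t\in\mathbb{R}),
\end{equation*}
with the analogous formulas near $-a$ and $-\tfrac a2$, and $\hat\psi_m\equiv 1$ on $[\tfrac a2+\tfrac cm,\,a-\tfrac bm]$; since $\kappa_{\psi_m}$ is invariant under $\gamma\mapsto 2\gamma$, it is determined by its restriction to $[\tfrac a2,a]$. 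Now $\supp\hat\psi_m\subseteq[-a-\tfrac bm,\,a+\tfrac cm]$ is contained in an interval of length $<1$ once $m>(b+c)/(1-2a)$, so $\Delta(\supp\hat\psi_m)>0$ and $\operatorname{dist}(0,\supp\hat\psi_m)>0$; and for $\epsilon>0$ small the dyadic dilates of $\supp_\epsilon\hat\psi_m$ cover $\widehat{\mathbb{R}}$ up to measure zero (using $g\ge 0$ and $0<\int_{-b}^{0}g<1$). Hence Theorem~\ref{thm:shrink} applies and $\mathcal{W}(\psi_m)$ is a frame with $A_m=\operatorname{ess\,inf}_\gamma\kappa_{\psi_m}(\gamma)$ and $B_m=\operatorname{ess\,sup}_\gamma\kappa_{\psi_m}(\gamma)$.

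Next I would evaluate $\kappa_{\psi_m}$ on the transition interval around $a/2$. Once $m$ exceeds the stated bound, the transitions around the neighbouring abutment points $a/4$, $a$, $2a$ are disjoint and the scales $n\le -1$ and $n\ge 2$ contribute nothing there, so only the $n=0$ and $n=1$ terms of $\kappa_{\psi_m}(\gamma)=\sum_{n\in\mathbb{Z}}|\hat\psi_m(2^n\gamma)|^2$ survive; writing $\gamma=\tfrac a2+\tfrac{s}{2m}$, so that $2\gamma=a+\tfrac sm$, and feeding this into the two formulas above yields
\begin{equation*}
\kappa_{\psi_m}\Bigl(\tfrac a2+\tfrac{s}{2m}\Bigr)=G\Bigl(\tfrac s2\Bigr)^{2}+\bigl(1-G(s)\bigr)^{2}=:\phi(s),\qquad s\in[-b,c].
\end{equation*}
The factor $\tfrac12$ inside $G$ is the crux: $2\gamma$ sweeps the transition of $\hat\psi_m$ near $a$ twice as fast as $\gamma$ sweeps the equally wide transition near $a/2$, which is exactly why the decisive quantities are the quarter-masses $\int_{[c/2,c]}g$ and $\int_{[-b,-b/2]}g$ of (iii) rather than half-masses. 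Then $\phi(c)=G(c/2)^{2}$ with $G(c/2)=1-\int_{[c/2,c]}g\in(0,1)$, so $\phi(c)<1$, while $\phi(-b)=1+G(-b/2)^{2}$ with $G(-b/2)=\int_{[-b,-b/2]}g\in(0,1)$, so $\phi(-b)>1$. Since $\phi$ is continuous on $[-b,c]$, $\kappa_{\psi_m}$ lies below $\tfrac12\bigl(1+G(c/2)^2\bigr)$ on a $\gamma$-interval of positive length near $\tfrac a2+\tfrac{c}{2m}$ and above $1+\tfrac12G(-b/2)^2$ on a $\gamma$-interval of positive length near $\tfrac a2-\tfrac{b}{2m}$. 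Therefore $A_m\le\alpha:=\tfrac12\bigl(1+G(c/2)^2\bigr)<1$ and $B_m\ge\beta:=1+\tfrac12G(-b/2)^2>1$, with $\alpha$ and $\beta$ depending only on $g$, as claimed.

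The routine parts are the two changes of variables and the bookkeeping of which dyadic scales contribute near an abutment point. The one step needing care — and the reason (iii) is stated in terms of quarter-intervals — is that factor-of-two mismatch between the $O(1/m)$ widths of the transitions of $\hat\psi_m$ at $\gamma$ and at $2\gamma$: it is precisely what forces the persistent gap $\alpha<1<\beta$, independently of $m$.
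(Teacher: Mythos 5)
The paper does not actually prove Theorem~\ref{thm:big}; it is quoted from \cite{BenKing09} (which is why the statement still carries stray symbols $b_i,c_i$ from the multi-interval version there), so there is no in-paper proof to compare against. Judged on its own, your reconstruction is essentially the right argument and, as far as I can check, the one underlying the frame bound gap phenomenon: reduce $A_m,B_m$ to $\operatorname{ess\,inf}/\operatorname{ess\,sup}$ of $\kappa_{\psi_m}$, note that for $m$ past the stated threshold only the scales $n=0,1$ survive on the $O(1/m)$ transition window at $a/2$ (your bookkeeping $m>2(b+c)/a$, $m>(4b+c)/a$, $m>(b+c)/(1-2a)$ matches exactly the terms in the theorem's max), and compute $\kappa_{\psi_m}(\tfrac a2+\tfrac{s}{2m})=G(\tfrac s2)^2+(1-G(s))^2$. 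The endpoint values $G(c/2)^2<1$ and $1+G(-b/2)^2>1$ follow from (ii)--(iii) regardless of the sign of $g$, and continuity of $G$ gives positive-measure sets, hence $m$-independent $\alpha<1$ and $\beta>1$. The factor-of-two mismatch you isolate is indeed the crux and explains why (iii) involves quarter-intervals.

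Two caveats. First, and most substantively, you invoke $g\ge 0$ (for the non-negativity hypothesis of Theorem~\ref{thm:shrink}, for monotonicity of $G$ in the $\supp_\epsilon$-covering argument, and implicitly for $\operatorname{ess\,inf}\kappa_{\psi_m}>0$, i.e.\ the frame property itself); this is not among the stated hypotheses. For a signed $g$ satisfying (i)--(iii) your covering/positivity argument does not go through as written, and the frame property would need a separate justification (e.g.\ via Theorem~\ref{thm:par}, where the length-$<1$ support kills all $k\neq0$ terms so that $M_\psi=\overline{K}_{\psi_m}$, $N_\psi=\underline{K}_{\psi_m}$, but one still must show $\underline{K}_{\psi_m}>0$). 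Since the only use of the theorem in this paper (the Corollary after it) first argues that $g$ must be non-negative, this assumption is harmless in context, but you should state it explicitly rather than use it silently. Second, minor quantifier slips: the identities $\hat\psi_m(\tfrac a2+\tfrac tm)=G(t)$ and $\hat\psi_m(a+\tfrac tm)=1-G(t)$ hold only for $t$ in a bounded window (depending on $m$), not for all $t\in\mathbb{R}$; and $\operatorname{dist}(0,\supp\hat\psi_m)>0$ does not follow from the support lying in an interval of length $<1$, but from the finer inclusion $\supp\hat\psi_m\subseteq[-a-\tfrac bm,-\tfrac a2+\tfrac cm]\cup[\tfrac a2-\tfrac bm,a+\tfrac cm]$ together with $m>2(b+c)/a$. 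Both are easily repaired and do not affect the conclusion.
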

\begin{cor}
Define $\psi_m$ as in Theorem~\ref{thm:bump}.  Then there does not exist a $g: \widehat{\mathbb{R}} \rightarrow \mathbb{R}$ such that $\hat{\psi}_m = \mathbbm{1}_{[-a,a]\cup[-a/2,a/2]} \ast g_{(m)}$, where $g_{(m)}(\cdot) = mg(m\cdot)$.
\end{cor}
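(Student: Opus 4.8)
The plan is to argue by contradiction. Suppose there is a $g\colon\widehat{\mathbb{R}}\to\mathbb{R}$ with $\hat{\psi}_m=\mathbbm{1}_L\ast g_{(m)}$ for every $m$ in the admissible range of Theorem~\ref{thm:bump}, where $L$ denotes the Parseval frame wavelet set $[-a,-\tfrac a2]\cup[\tfrac a2,a]$ of Theorem~\ref{thm:big}. The aim is to show that such a $g$ must satisfy hypotheses (i)--(iii) of Theorem~\ref{thm:big}; that theorem then produces a \emph{frame bound gap}, i.e.\ a $\beta>1$ independent of $m$ with $B_m\geq\beta$ for all large $m$, which directly contradicts the conclusion $B_m\to1$ of Theorem~\ref{thm:bump}.

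All of the work is in verifying (i)--(iii). I would first argue that $\supp g$ is bounded: for $|\gamma|$ large, $(\mathbbm{1}_L\ast g_{(m)})(\gamma)=\int_{\gamma-a}^{\gamma-a/2}g_{(m)}=\hat{\psi}_m(\gamma)=0$, so $x\mapsto\int_{-\infty}^x g_{(m)}$ is eventually periodic of period $\tfrac a2$ and, having a finite limit, is eventually constant; hence $g_{(m)}$, and thus $g$, is compactly supported, say in $[-b,c]$. Condition (ii), $\int g=1$, follows by evaluating $\hat{\psi}_m=\mathbbm{1}_L\ast g_{(m)}$ at a point where $\hat{\psi}_m\equiv1$ and $\gamma-u$ ranges over the interior of $[\tfrac a2,a]$ for all $u\in\supp g_{(m)}$, where the convolution equals $\int g$. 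For the rest I localize near $\gamma=a$: once $m>2(b+c)/a$ the four translates in the distributional identity $\hat{\psi}_m'=g_{(m)}(\cdot+a)-g_{(m)}(\cdot+\tfrac a2)+g_{(m)}(\cdot-\tfrac a2)-g_{(m)}(\cdot-a)$ have pairwise disjoint supports, so near $a$ one gets $\hat{\psi}_m'(a+s)=-g_{(m)}(s)$; since near $a$ also $\hat{\psi}_m=\hat{\varphi}_{a/4,m}(\cdot-\tfrac{3a}4)$, this yields $g(v)=-\tfrac1m\,\hat{\varphi}_{a/4,m}'(\tfrac a4+\tfrac vm)$, which by Lemma~\ref{lem:incr} and the explicit formula for $\hat{\varphi}_{b,m}'$ in its proof is strictly positive on $(-1,1)$; in particular $0\in\supp g$, finishing (i) (and, since $\hat{\varphi}_{a/4,m}$ vanishes beyond $\tfrac a4+\tfrac1m$, $\supp g\subseteq[-1,1]$). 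Integrating, $\int_t^{c}g=\hat{\varphi}_{a/4,m}(\tfrac a4+\tfrac tm)\in(0,1)$ for $t\in(-1,1)$; taking $t=\tfrac c2$ and $t=-\tfrac b2$ gives $0<\int_{[c/2,c]}g<1$ and $0<\int_{[-b/2,c]}g<1$, which is (iii). Now Theorem~\ref{thm:big} applies and the contradiction is reached.

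I expect the main obstacle to be the soft part --- making sure the convolution $\mathbbm{1}_L\ast g_{(m)}$ is even well defined and that $g$ is forced to be integrable and compactly supported --- since a priori $g$ is only a function $\widehat{\mathbb{R}}\to\mathbb{R}$; the eventual-periodicity argument above is the cleanest route I see, but it has to be run carefully on both tails under a minimal local-integrability hypothesis. I would also record the shortcut that bypasses Theorem~\ref{thm:big} altogether: the identity $\int_t^{c}g(v)\,dv=\hat{\varphi}_{a/4,m}(\tfrac a4+\tfrac tm)=\bigl(1+e^{2mt/(1-t^2)}\bigr)^{-1}$ must hold for \emph{every} admissible $m$, yet its left-hand side does not depend on $m$ while its right-hand side is strictly monotone in $m$ for each fixed $t\in(0,1)$ --- an immediate contradiction --- so in truth only the asymptotic $B_m\to1$ of Theorem~\ref{thm:bump}, not a full frame bound gap, is genuinely needed.
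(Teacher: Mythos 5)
Your argument is correct and is essentially the paper's own proof: both assume such a $g$ exists, deduce that it must be nonnegative (via the monotonicity of Lemma~\ref{lem:incr}) with support in $[-1,1]$ and integral $1$, hence satisfies the hypotheses of Theorem~\ref{thm:big}, whose $m$-independent frame bound gap contradicts $B_m \rightarrow 1$ from Theorem~\ref{thm:bump}; you merely verify hypotheses (i)--(iii) in more detail (distributional derivative, explicit formula for $g$ near the edge) than the paper's terser support-and-monotonicity argument. Your closing shortcut --- that the forced identity $\int_t^c g = \hat{\varphi}_{a/4,m}\left(\frac{a}{4}+\frac{t}{m}\right)$ has an $m$-independent left side but a strictly $m$-dependent right side --- is a sound alternative that bypasses Theorem~\ref{thm:big} altogether, though, like the paper's argument, it implicitly uses that a single $g$ must work for more than one admissible $m$.
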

\begin{proof}
Assume that such a $g$ exists. Since the building blocks of $\hat{\psi}_m$ satisfy $\supp \hat{\phi}_m = [-a/4-1/m,a/4+1/m]$, $\supp g = [-1,1]$.  Furthermore, since $\hat{\phi}_m$ is increasing over $(-\infty,0)$ (Lemma~\ref{lem:incr}), $g$ must be non-negative.  Thus such a $g$ satisfies that hypotheses of Theorem~\ref{thm:big}, and for large enough $m$, there exist frame bound gaps for $\mathcal{W}(\psi_m)$.  That is, the upper frame bound of $\mathcal{W}(\psi_m)$ cannot converge to one.  However, this contradicts Theorem~\ref{thm:bump}.
\end{proof}
Thus, these functions which are commonly used in mathematics are not the result of convolutional smoothing.

%%%%%%%%%%%%%%%%%%%%%%%
\section{Conclusion}\label{section:con}
%%%%%%%%%%%%%%%%%%%%%%%
Based on conversations with other authors in the wavelet set community and remarks in published papers, it seems that smoothing of higher dimensional wavelet set wavelets was assumed to be similar to smoothing in $\mathbb{R}$.  However, Corollary~38 in \cite{BenKing09} showed that convolutional smoothing of the Parseval frame wavelet set wavelets ($[-2a,2a]^d\backslash[-a,a]^d$) on the frequency domain yields systems with upper frame bounds which increase away from $1$ as the $d$ increases.  This theme is continued in this paper.  We see in Section~\ref{sec:fail} that natural generalizations of Bin Han's proof of existence of smooth Parseval wavelets in $L^2(\mathbb{R})$ to $L^2(\mathbb{R}^2)$ also fail.  We showed that smoothable wavelet sets do exist in higher dimensions, but these wavelet sets were created for the sole purpose of being smoothable.  Thus, the question remains whether there exist continuous functions $\hat{\psi}_n$ for which $\mathcal{W}(\psi_n)$ has frame bounds converging to $1$ and for which, say, $\Vert \mathbbm{1}_{[-a,a]^2 \backslash [-a/2,a/2]^2} - \hat{\psi}_n \Vert_{L^2(\widehat{\mathbb{R}}^2)}$ converges to $0$ as $n \rightarrow \infty$ for some $a < 1/2$.  Furthermore, we also explicitly construct smooth frame wavelets which have upper frame bounds converging to $1$ in Section \ref{sec:construct}.  Using results about frame bound gaps, this construction shows that basic functions used in differential topology are not the result of convolutional smoothing.

%%%%%%%%%%%%%%%%%%%%%%%
\section{Appendix}\label{section:app}
%%%%%%%%%%%%%%%%%%%%%%%
We include here the calculation-intensive proofs of the results in Section~\ref{sec:fail}.
\begin{proof}[Proposition~\ref{prop:h}]
We first rewrite $h_{(L_a;\delta,\frac{\delta}{2})}$ in terms of $\theta$ (from Lemma \ref{lem:theta}).
\begin{equation*}
\left\{ \begin{array}{lr}
\theta\left(\frac{2a-|y|}{\delta}\right) & \textrm{when $|x| \in [0,2a-\delta]$ and $|y| \in [2a - \delta, 2a + \delta]$} \\
\theta\left(\frac{2a-|y|}{\delta}\right)\theta\left(\frac{2a-|x|}{\delta}\right)& \textrm{when $|x|,|y| \in [2a - \delta, 2a + \delta]$} \\
\theta\left(\frac{2a-|x|}{\delta}\right) & \textrm{when $|y| \in [0,2a-\delta]$ and $|x| \in [2a - \delta, 2a + \delta]$} \\
1 & \textrm{when $(|x|,|y|)^T \in [0,2a-\delta]^2 \backslash [0,a + \frac{\delta}{2}]^2$} \\
1 - \theta\left(\frac{a-|y|}{\delta/2}\right) & \textrm{when $|x| \in [0,a-\frac{\delta}{2}]$ and $|y| \in [a - \frac{\delta}{2}, a + \frac{\delta}{2}]$} \\
1-\theta\left(\frac{a-|y|}{\delta/2}\right)\theta\left(\frac{a-|x|}{\delta/2}\right)& \textrm{when $|x|,|y| \in [a - \frac{\delta}{2}, a + \frac{\delta}{2}]$} \\
1-\theta\left(\frac{a-|x|}{\delta/2}\right) & \textrm{when $|y| \in [0,a-\frac{\delta}{2}]$ and $|x| \in [a - \frac{\delta}{2}, a + \frac{\delta}{2}]$} \\
0 & \textrm{otherwise}.
\end{array} \right.
\end{equation*}
We will prove the claim if we show that $\sum_{n\in\mathbb{Z}} h_{(L_a;\delta,\frac{\delta}{2})}^2(2^n x, 2^n y) < 1$ on a set of positive measure. Note that for $(x,y)^T \in [0,a-\frac{\delta}{2}]\times(a - \frac{\delta}{2}, a + \frac{\delta}{2}]$,
\begin{eqnarray*}
\sum_{n\in\mathbb{Z}} h_{(L_a;\delta,\frac{\delta}{2})}^2(2^n x, 2^n y) & = & h_{(L_a;\delta,\frac{\delta}{2})}^2(x,y) + h_{(L_a;\delta,\frac{\delta}{2})}^2(2x, 2y) \\
& = & \left(1-\theta\left(\frac{a-y}{\delta/2}\right)\right)^2 + \theta^2 \left(\frac{a-y}{\delta/2}\right) \\
& = & 1 + 2\theta\left(\frac{a-|y|}{\delta/2}\right)\left[\theta\left(\frac{a-|y|}{\delta/2}\right)-1\right] \\
& < & 1
\end{eqnarray*}
since $0 < \theta\left(\frac{a-|y|}{\delta/2}\right) < 1$ for $y > a - \frac{\delta}{2}$.
\end{proof}
\begin{proof}[Proposition~\ref{prop:g}]
We first compute the following for $x, y > 0$, making use of Lemma~\ref{lem:han}:
\begin{equation*}
g_{(L_a;\delta,\frac{\delta}{2})}^2(x,y) + g_{(L_a;\delta,\frac{\delta}{2})}^2(2x,2y)
\end{equation*}
\begin{equation*}
 \left\{ \begin{array}{lr}
\theta^2\left(\frac{2a-y}{\delta}\right)  &  (x,y)^T \in [0,2a-\delta]\times[2a - \delta, 2a + \delta] \\
\theta^2\left(\frac{2a-y}{\delta}\right)\theta^2\left(\frac{2a-x}{\delta}\right)  &  (x,y)^T \in [2a - \delta, 2a + \delta]^2 \\
\theta^2\left(\frac{2a-x}{\delta}\right) &  (x,y)^T \in [2a - \delta, 2a + \delta]\times[0,2a-\delta] \\
1 & \quad(x,y)^T \in [0,2a-\delta]^2 \backslash [0,a + \frac{\delta}{2}]^2\\
\theta^2\left(\frac{a-y}{\delta/2}\right)+ \theta^2\left(\frac{y-a}{\delta/2}\right)  &(x,y)^T \in [0,a-\frac{\delta}{2}]\times [a - \frac{\delta}{2}, a + \frac{\delta}{2}] \\
\theta^2\left(\frac{a-y}{\delta/2}\right)\theta^2\left(\frac{a-x}{\delta/2}\right)+\theta^2\left(\frac{y-a}{\delta/2}\right)\theta^2\left(\frac{x-a}{\delta/2}\right) & (x,y)^T \in [a - \frac{\delta}{2}, a + \frac{\delta}{2}]^2 \\
\theta^2\left(\frac{a-x}{\delta/2}\right) + \theta^2\left(\frac{x-a}{\delta/2}\right) &  (x,y)^T \in [a -\frac{\delta}{2}, a + \frac{\delta}{2}]\times[0,a-\frac{\delta}{2}]  \\
1  &  (x,y)^T \in [0,a-\frac{\delta}{2}]^2 \backslash [0,\frac{a}{2} + \frac{\delta}{4}]^2 \\
\theta^2\left(\frac{y-a/2}{\delta/4}\right) & (x,y)^T \in [0,\frac{a}{2}-\frac{\delta}{4}]\times [\frac{a}{2} - \frac{\delta}{4}, \frac{a}{2} + \frac{\delta}{4}] \\
\theta^2\left(\frac{y-a/2}{\delta/4}\right)\theta^2\left(\frac{x-a/2}{\delta/4}\right) &  (x,y)^T \in [\frac{a}{2} - \frac{\delta}{4}, \frac{a}{2} + \frac{\delta}{4}]^2\\
\theta^2\left(\frac{x-a/2}{\delta/4}\right) & (x,y)^T \in [\frac{a}{2} - \frac{\delta}{4}, \frac{a}{2} + \frac{\delta}{4}] \times [0,\frac{a}{2}-\frac{\delta}{4}] \\
0 & \textrm{otherwise}
\end{array} \right.
\end{equation*}
\begin{equation*}
 \left\{ \begin{array}{lr}
\theta^2\left(\frac{2a-y}{\delta}\right)& \quad(x,y)^T \in [0,2a-\delta]\times[2a - \delta, 2a + \delta] \\
\theta^2\left(\frac{2a-y}{\delta}\right)\theta^2\left(\frac{2a-x}{\delta}\right) & (x,y)^T \in [2a - \delta, 2a + \delta]^2\\
\theta^2\left(\frac{2a-x}{\delta}\right) & \quad(x,y)^T \in [2a - \delta, 2a + \delta]\times[0,2a-\delta] \\
1 &  (x,y)^T \in [0,2a-\delta]^2 \backslash [0,a + \frac{\delta}{2}]^2 \\
1 & (x,y)^T \in [0,a-\frac{\delta}{2}]\times [a - \frac{\delta}{2}, a + \frac{\delta}{2}] \\
\theta^2\left(\frac{a-y}{\delta/2}\right)\theta^2\left(\frac{a-x}{\delta/2}\right) +\theta^2\left(\frac{y-a}{\delta/2}\right)\theta^2\left(\frac{x-a}{\delta/2}\right) & (x,y)^T \in [a - \frac{\delta}{2}, a + \frac{\delta}{2}]^2 \\
1  &  (x,y)^T \in [a -\frac{\delta}{2}, a + \frac{\delta}{2}]\times[0,a-\frac{\delta}{2}]  \\
1 & (x,y)^T \in [0,a-\frac{\delta}{2}]^2 \backslash [0,\frac{a}{2} + \frac{\delta}{4}]^2 \\
\theta^2\left(\frac{y-a/2}{\delta/4}\right) & (x,y)^T \in [0,\frac{a}{2}-\frac{\delta}{4}]\times [\frac{a}{2} - \frac{\delta}{4}, \frac{a}{2} + \frac{\delta}{4}] \\
\theta^2\left(\frac{y-a/2}{\delta/4}\right)\theta^2\left(\frac{x-a/2}{\delta/4}\right) & (x,y)^T \in [\frac{a}{2} - \frac{\delta}{4}, \frac{a}{2} + \frac{\delta}{4}]^2 \\
\theta^2\left(\frac{x-a/2}{\delta/4}\right) & (x,y)^T \in [\frac{a}{2} - \frac{\delta}{4}, \frac{a}{2} + \frac{\delta}{4}] \times [0,\frac{a}{2}-\frac{\delta}{4}] \\
0 & \textrm{otherwise}.
\end{array} \right.
\end{equation*}
%%%%%%%%%%%%%%%%%%%%%%%%%%%%%%%%%%%%%%%%
Continuing inductively we obtain $\sum_{n\in\mathbb{Z}} g_{(L_a;\delta,\frac{\delta}{2})}^2(2^n x, 2^n y)$
\begin{equation*}
 = \left\{ \begin{array}{lr}
0 & \textrm{when $x=y=0$} \\
\theta^2\left(\frac{a-2^my}{\delta/2}\right)\theta^2\left(\frac{a-2^mx}{\delta/2}\right) & (2^mx,2^my)^T \in [a - \frac{\delta}{2}, a + \frac{\delta}{2}]^2; m \in \mathbb{Z} \\
\quad + \theta^2\left(\frac{2^my-a}{\delta/2}\right)\theta^2\left(\frac{2^mx-a}{\delta/2}\right) &
\\1 & \textrm{otherwise.}
\end{array} \right. 
\end{equation*}
We would like to show that $\theta^2\left(\frac{a-y}{\delta/2}\right)\theta^2\left(\frac{a-x}{\delta/2}\right) +\theta^2\left(\frac{y-a}{\delta/2}\right)\theta^2\left(\frac{x-a}{\delta/2}\right)$ does not take the value $1$ for almost all $(x,y)^T \in [a - \frac{\delta}{2}, a + \frac{\delta}{2}]^2$. Assume that $\frac{1}{2} < \beta < 1$ and $0 < \alpha < 1$. Then $\frac{\beta}{2\beta -1} > 1$, implying
\begin{eqnarray*}
 (\alpha \neq \frac{\beta}{2\beta - 1})  &\Rightarrow&  (2\alpha\beta - \alpha \neq \beta)\\
  & \Rightarrow & (1 + 2\alpha\beta - \alpha -\beta \neq 1) \\
  &\Rightarrow & (\alpha\beta + (1 - \alpha)(1 - \beta) \neq 1)
\end{eqnarray*}
It follows from the intermediate value theorem and continuity that the measure of $E = \{ (x,y)^T \in [a - \frac{\delta}{2}, a + \frac{\delta}{2}]^2 : \frac{1}{2} < \theta^2\left(\frac{a-y}{\delta/2}\right) < 1, 0 < \theta^2 \left(\frac{a-x}{\delta/2}\right) < 1 \}$ is positive. So for $(x,y)^T \in E$,
\begin{equation*}
\theta^2\left(\frac{a-y}{\delta/2}\right)\theta^2\left(\frac{a-x}{\delta/2}\right) + \theta^2\left(\frac{y-a}{\delta/2}\right)\theta^2\left(\frac{x-a}{\delta/2}\right)
 \end{equation*}
 \vspace{-5 mm}
\begin{eqnarray*}
& = & \theta^2\left(\frac{a-y}{\delta/2}\right)\theta^2\left(\frac{a-x}{\delta/2}\right) + \left(1 - \theta^2\left(\frac{a-y}{\delta/2}\right)\right)\left(1 - \theta^2\left(\frac{a-x}{\delta/2}\right)\right) \\
& \neq & 1
\end{eqnarray*}
\end{proof}
\begin{proof}[Proposition~\ref{prop:f}]
We calculate the sum
\begin{equation*}
\hat{\psi}_\delta^2(\vec{x}) + \hat{\psi}_\delta^2(2\vec{x}) 
\end{equation*}
%{\small
\begin{equation*}
 =  \left\{ \begin{array}{lr}
\theta^2\left(\frac{2a-y}{\delta}\right) & (x,y)^T \in [0,2a-\delta]\times[2a - \delta, 2a + \delta] \\
\theta^2\left(\frac{4a-x-y-\delta}{\delta}\right) &(x,y)^T \in [2a - \delta, 2a + \delta]^2 \\
\theta^2\left(\frac{2a-x}{\delta}\right) & (x,y)^T \in [2a - \delta, 2a + \delta]\times[0,2a-\delta] \\
1 & (x,y)^T \in [0,2a-\delta]^2 \backslash [0,a + \frac{\delta}{2}]^2 \\
1 & (x,y)^T \in [a - \frac{\delta}{2}, a + \frac{\delta}{2}]^2; 2a \leq x + y 2a+\delta \\
\theta^2\left(\frac{a-y}{\delta/2}\right) + \theta^2\left(\frac{y-a}{\delta/2}\right) & (x,y)^T \in [0,a-\frac{\delta}{2}]\times [a - \frac{\delta}{2}, a + \frac{\delta}{2}] \\
\theta^2\left(\frac{x+y-2a+\delta/2}{\delta/2}\right) +& (x,y)^T \in [a - \frac{\delta}{2}, a + \frac{\delta}{2}]^2\\
\quad \theta^2\left(\frac{2a-x-y-\delta/2}{\delta/2}\right) & \quad\textrm{ for $2a-\delta \leq x+y\leq 2a$} \\
\theta^2\left(\frac{a-x}{\delta/2}\right) + \theta^2\left(\frac{x-a}{\delta/2}\right) &(x,y)^T \in [a -\frac{\delta}{2}, a + \frac{\delta}{2}]\times[0,a-\frac{\delta}{2}]  \\
1 & (x,y)^T \in [0,a-\frac{\delta}{2}]^2 \backslash [0,\frac{a}{2} + \frac{\delta}{4}]^2 \\
1 & (x,y)^T \in [\frac{a}{2} - \frac{\delta}{4}, \frac{a}{2} + \frac{\delta}{4}]^2 \textrm{ for $a \leq x + y \leq a+\frac{\delta}{2}$} \\
\theta^2\left(\frac{y-a/2}{\delta/4}\right) & (x,y)^T \in [0,\frac{a}{2}-\frac{\delta}{4}]\times [\frac{a}{2} - \frac{\delta}{4}, \frac{a}{2} + \frac{\delta}{4}] \\
\theta^2\left(\frac{x+y-a+\delta/4}{\delta/4}\right) &(x,y)^T \in [\frac{a}{2} - \frac{\delta}{4}, \frac{a}{2} + \frac{\delta}{4}]^2 \textrm{ for $a - \frac{\delta}{2} \leq x+y \leq a$} \\
\theta^2\left(\frac{x-a/2}{\delta/4}\right)&(x,y)^T \in [\frac{a}{2} - \frac{\delta}{4}, \frac{a}{2} + \frac{\delta}{4}] \times [0,\frac{a}{2}-\frac{\delta}{4}] \\
0 & \textrm{otherwise},
\end{array} \right. 
\end{equation*}
%}
%{\small
\begin{eqnarray*}
& = & \left\{ \begin{array}{lr}
\theta^2\left(\frac{2a-y}{\delta}\right)& (x,y)^T \in [0,2a-\delta]\times[2a - \delta, 2a + \delta] \\
\theta^2\left(\frac{4a-x-y-\delta}{\delta}\right)&(x,y)^T \in [2a - \delta, 2a + \delta]^2 \\
\theta^2\left(\frac{2a-x}{\delta}\right)& (x,y)^T \in [2a - \delta, 2a + \delta]\times[0,2a-\delta] \\
1 &(x,y)^T \in [0,2a-\delta]^2 \backslash [0,\frac{a}{2} + \frac{\delta}{4}]^2 \\
1 &(x,y)^T \in [\frac{a}{2} - \frac{\delta}{4}, \frac{a}{2} + \frac{\delta}{4}]^2 \textrm{ for $a \leq x+y \leq a + \frac{\delta}{2}$} \\
\theta^2\left(\frac{y-a/2}{\delta/4}\right) &x,y)^T \in [0,\frac{a}{2}-\frac{\delta}{4}]\times [\frac{a}{2} - \frac{\delta}{4}, \frac{a}{2} + \frac{\delta}{4}] \\
\theta^2\left(\frac{x+y-a+\delta/4}{\delta/4}\right) & (x,y)^T \in [\frac{a}{2} - \frac{\delta}{4}, \frac{a}{2} + \frac{\delta}{4}]^2 \textrm{ for $a - \frac{\delta}{2} \leq x+y \leq a$} \\
\theta^2\left(\frac{x-a/2}{\delta/4}\right) & (x,y)^T \in [\frac{a}{2} - \frac{\delta}{4}, \frac{a}{2} + \frac{\delta}{4}] \times [0,\frac{a}{2}-\frac{\delta}{4}] \\
0 & \textrm{otherwise},
\end{array} \right. \\
& = & f_{([-2a,2a]^2\backslash[-\frac{a}{2},\frac{a}{2}]^2;\delta,\frac{\delta}{4})}^2(\vec{x}),
\end{eqnarray*}
%}
as desired.
\end{proof}

\section{Acknowledgements}
While writing this paper, the author was supported in part by a Department of Education GAANN Fellowship and a University of Maryland Graduate School Ann G. Wylie Dissertation Fellowship.

%\bibliography{thesis}{}

\begin{thebibliography}{10}
\expandafter\ifx\csname url\endcsname\relax
  \def\url#1{\texttt{#1}}\fi
\expandafter\ifx\csname urlprefix\endcsname\relax\def\urlprefix{URL }\fi
\providecommand{\bibinfo}[2]{#2}
\providecommand{\eprint}[2][]{\url{#2}}

\bibitem{DaL98}
\bibinfo{author}{X. Dai}, \bibinfo{author}{D.R. Larson},
\newblock \bibinfo{title}{Wandering vectors for unitary systems and orthogonal
  wavelets},
\newblock \emph{\bibinfo{journal}{Mem. Amer. Math. Soc.}},
  \textbf{\bibinfo{volume}{134}} (\bibinfo{year}{1998}), \bibinfo{pages}{viii+68}.

\bibitem{DLS97}
\bibinfo{author}{X. Dai}, \bibinfo{author}{D.R. Larson},
  \bibinfo{author}{D.M. Speegle},
\newblock \bibinfo{title}{Wavelet sets in {$\mathbb{R}\sp n$}},
\newblock \emph{\bibinfo{journal}{J. Fourier Anal. Appl.}},
  \textbf{\bibinfo{volume}{3}} (\bibinfo{year}{1997}), \bibinfo{pages}{451--456}.

\bibitem{DLS98}
\bibinfo{author}{X. Dai}, \bibinfo{author}{D.R. Larson},
  \bibinfo{author}{D.M. Speegle},
\newblock \bibinfo{title}{Wavelet sets in {$\mathbb{R}\sp n$}. {II}},
\newblock In \emph{\bibinfo{booktitle}{Wavelets, multiwavelets, and their
  applications ({S}an {D}iego, {CA}, 1997)}}, vol. \bibinfo{volume}{216} of
  \emph{\bibinfo{series}{Contemp. Math.}}, 
  \bibinfo{publisher}{Amer. Math. Soc.}, \bibinfo{address}{Providence, RI} (\bibinfo{year}{1998}), \bibinfo{pages}{15--40}.

\bibitem{HWW96}
\bibinfo{author}{E. Hern{\'a}ndez}, \bibinfo{author}{X. Wang},
  \bibinfo{author}{G. Weiss},
\newblock \bibinfo{title}{Smoothing minimally supported frequency wavelets.
  {I}},
\newblock \emph{\bibinfo{journal}{J. Fourier Anal. Appl.}},
  \textbf{\bibinfo{volume}{2}} (\bibinfo{year}{1996}), \bibinfo{pages}{329--340}.

\bibitem{HWW97}
\bibinfo{author}{E. Hern{\'a}ndez}, \bibinfo{author}{X. Wang},
  \bibinfo{author}{G. Weiss},
\newblock \bibinfo{title}{Smoothing minimally supported frequency wavelets.
  {II}},
\newblock \emph{\bibinfo{journal}{J. Fourier Anal. Appl.}},
  \textbf{\bibinfo{volume}{3}} (\bibinfo{year}{1997}), \bibinfo{pages}{23--41}. 

\bibitem{Mer08}
\bibinfo{author}{K.D. Merrill},
\newblock \bibinfo{title}{Simple wavelet sets for integral dilations in
  $\mathbb{R}^2$},
\newblock In \bibinfo{editor}{P.E. Jorgensen}, \bibinfo{editor}{K.D. Merrill}, \bibinfo{editor}{J.A. Packer} (eds.),
  \emph{\bibinfo{booktitle}{Representations, Wavelets, and Frames: A
  Celebration of the Mathematical Work of Lawrence W. Baggett}}, Applied and
  Numerical Harmonic Analysis, \bibinfo{publisher}{Birkh\"{a}user},
  \bibinfo{address}{Boston, MA} (\bibinfo{year}{2008}).

\bibitem{BMM99}
\bibinfo{author}{L.W. Baggett}, \bibinfo{author}{H.A. Medina},
  \bibinfo{author}{K.D. Merrill},
\newblock \bibinfo{title}{Generalized multi-resolution analyses and a
  construction procedure for all wavelet sets in {$\mathbb{R}\sp n$}},
\newblock \emph{\bibinfo{journal}{J. Fourier Anal. Appl.}},
  \textbf{\bibinfo{volume}{5}}  (\bibinfo{year}{1999}), \bibinfo{pages}{563--573}.

\bibitem{BLe99}
\bibinfo{author}{J.J. Benedetto}, \bibinfo{author}{M.T. Leon},
\newblock \bibinfo{title}{The construction of multiple dyadic minimally
  supported frequency wavelets on {$\mathbb{R}\sp d$}},
\newblock In \emph{\bibinfo{booktitle}{The functional and harmonic analysis of
  wavelets and frames ({S}an {A}ntonio, {TX}, 1999)}}, vol.
  \bibinfo{volume}{247} of \emph{\bibinfo{series}{Contemp. Math.}},
 \bibinfo{publisher}{Amer. Math. Soc.},
  \bibinfo{address}{Providence, RI} (\bibinfo{year}{1999}),  \bibinfo{pages}{43--74}.

\bibitem{BLe01}
\bibinfo{author}{J.J. Benedetto}, \bibinfo{author}{M. Leon},
\newblock \bibinfo{title}{The construction of single wavelets in
  {$D$}-dimensions},
\newblock \emph{\bibinfo{journal}{J. Geom. Anal.}},
  \textbf{\bibinfo{volume}{11}} (\bibinfo{year}{2001}), \bibinfo{pages}{1--15}.

\bibitem{SW98}
\bibinfo{author}{P.M. Soardi}, \bibinfo{author}{D. Weiland},
\newblock \bibinfo{title}{Single wavelets in {$n$}-dimensions},
\newblock \emph{\bibinfo{journal}{J. Fourier Anal. Appl.}},
  \textbf{\bibinfo{volume}{4}}  (\bibinfo{year}{1998}), \bibinfo{pages}{299--315}.

\bibitem{Zak96}
\bibinfo{author}{V. Zakharov},
\newblock \bibinfo{title}{Nonseparable multidimensional {L}ittlewood-{P}aley
  like wavelet bases} (\bibinfo{year}{1996}).
\newblock \bibinfo{note}{{P}reprint}.

\bibitem{BJMP}
\bibinfo{author}{L. Baggett}, \bibinfo{author}{P. Jorgensen},
  \bibinfo{author}{K. Merrill}, \bibinfo{author}{J. Packer},
\newblock \bibinfo{title}{A non-{MRA} {$C\sp r$} frame wavelet with rapid
  decay},
\newblock \emph{\bibinfo{journal}{Acta Appl. Math.}},
  \textbf{\bibinfo{volume}{89}} (\bibinfo{year}{2006}), \bibinfo{pages}{251--270}.

\bibitem{ChH97}
\bibinfo{author}{O. Christensen}, \bibinfo{author}{C. Heil},
\newblock \bibinfo{title}{Perturbations of {B}anach frames and atomic
  decompositions},
\newblock \emph{\bibinfo{journal}{Math. Nachr.}},
  \textbf{\bibinfo{volume}{185}} (\bibinfo{year}{1997}), \bibinfo{pages}{33--47}.
  

\bibitem{BHn94}
\bibinfo{author}{B. Han},
\newblock \emph{\bibinfo{title}{Wavelets}},
\newblock Master's thesis, \bibinfo{school}{Academy of Sciences, China}
  (\bibinfo{year}{1994}).

\bibitem{BHn97}
\bibinfo{author}{B. Han},
\newblock \bibinfo{title}{On dual wavelet tight frames},
\newblock \emph{\bibinfo{journal}{Appl. Comput. Harmon. Anal.}},
  \textbf{\bibinfo{volume}{4}} (\bibinfo{year}{1997}), \bibinfo{pages}{380--413}.

\bibitem{Dissert}
\bibinfo{author}{E.J. King},
\newblock \emph{\bibinfo{title}{Wavelet and frame theory: frame bound gaps,
  generalized shearlets, {G}rassmannian fusion frames, and {$p$}-adic
  wavelets}},
\newblock Ph.D. thesis, \bibinfo{school}{University of Maryland}
  (\bibinfo{year}{2009}).
\newblock \urlprefix\url{http://hdl.handle.net/1903/9636}.

\bibitem{BSu02}
\bibinfo{author}{J.J. Benedetto}, \bibinfo{author}{S. Sumetkijakan},
\newblock \bibinfo{title}{A fractal set constructed from a class of wavelet
  sets},
\newblock In \emph{\bibinfo{booktitle}{Inverse problems, image analysis, and
  medical imaging ({N}ew {O}rleans, {LA}, 2001)}}, vol. \bibinfo{volume}{313}
  of \emph{\bibinfo{series}{Contemp. Math.}}, 
  \bibinfo{publisher}{Amer. Math. Soc.}, \bibinfo{address}{Providence, RI}
  (\bibinfo{year}{2002}), \bibinfo{pages}{19--35}.

\bibitem{BenKing09}
\bibinfo{author}{J.J. Benedetto}, \bibinfo{author}{E.J. King},
\newblock \bibinfo{title}{Smooth functions associated with wavelet sets on
  {$\Bbb R^d$}, {$d\geq 1$}, and frame bound gaps},
\newblock \emph{\bibinfo{journal}{Acta Appl. Math.}},
  \textbf{\bibinfo{volume}{107}} (\bibinfo{year}{2009}), \bibinfo{pages}{121--142}.
\newblock \urlprefix\url{http://dx.doi.org/10.1007/s10440-008-9412-2}.

\bibitem{DS52}
\bibinfo{author}{R.J. Duffin}, \bibinfo{author}{A.C. Schaeffer},
\newblock \bibinfo{title}{A class of nonharmonic {F}ourier series},
\newblock \emph{\bibinfo{journal}{Trans. Amer. Math. Soc.}},
  \textbf{\bibinfo{volume}{72}}  (\bibinfo{year}{1952}), \bibinfo{pages}{341--366}.

\bibitem{Haar09}
\bibinfo{author}{A. Haar},
\newblock \emph{\bibinfo{title}{Zur {T}heorie der orthogonalen
  {F}unktionensysteme}},
\newblock Ph.D. thesis, \bibinfo{school}{University of G\" {o}ttingen}
  (\bibinfo{year}{1909}).

\bibitem{Haar10}
\bibinfo{author}{A. Haar}
\newblock \bibinfo{title}{Zur {T}heorie der orthogonalen {F}unktionensysteme}.
\newblock \emph{\bibinfo{journal}{Math. Ann.}} \textbf{\bibinfo{volume}{69}} (\bibinfo{year}{1910}),
  \bibinfo{pages}{331--371}.

\bibitem{Dau92}
\bibinfo{author}{I. Daubechies},
\newblock \emph{\bibinfo{title}{Ten lectures on wavelets}},
  vol.~\bibinfo{volume}{61} of \emph{\bibinfo{series}{CBMS-NSF Regional
  Conference Series in Applied Mathematics}}, \bibinfo{publisher}{Society for
  Industrial and Applied Mathematics (SIAM)}, \bibinfo{address}{Philadelphia,
  PA} (\bibinfo{year}{1992}).

\bibitem{Chr02}
\bibinfo{author}{O. Christensen},
\newblock \emph{\bibinfo{title}{An introduction to frames and {R}iesz bases}},
\newblock \emph{\bibinfo{series}{Applied and Numerical Harmonic Analysis}},
  \bibinfo{publisher}{Birkh\"auser Boston Inc.}, \bibinfo{address}{Boston,
  MA} (\bibinfo{year}{2002}).

\bibitem{CS93b}
\bibinfo{author}{C.K. Chui}, \bibinfo{author}{X.L. Shi},
\newblock \bibinfo{title}{Inequalities of {L}ittlewood-{P}aley type for frames
  and wavelets},
\newblock \emph{\bibinfo{journal}{SIAM J. Math. Anal.}},
  \textbf{\bibinfo{volume}{24}} (\bibinfo{year}{1993}), \bibinfo{pages}{263--277}.

\bibitem{Jing99}
\bibinfo{author}{Z. Jing},
\newblock \bibinfo{title}{On the stability of wavelet and {G}abor frames
  ({R}iesz bases)},
\newblock \emph{\bibinfo{journal}{J. Fourier Anal. Appl.}},
  \textbf{\bibinfo{volume}{5}}  (\bibinfo{year}{1999}), \bibinfo{pages}{105--125}.

\bibitem{RS97}
\bibinfo{author}{A. Ron}, \bibinfo{author}{Z. Shen},
\newblock \bibinfo{title}{Affine systems in {$L\sb 2(\bold R\sp d)$}: the
  analysis of the analysis operator},
\newblock \emph{\bibinfo{journal}{J. Funct. Anal.}},
  \textbf{\bibinfo{volume}{148}}  (\bibinfo{year}{1997}), \bibinfo{pages}{408--447}.

\bibitem{BSu06}
\bibinfo{author}{J.J. Benedetto}, \bibinfo{author}{S. Sumetkijakan},
\newblock \bibinfo{title}{Tight frames and geometric properties of wavelet
  sets},
\newblock \emph{\bibinfo{journal}{Adv. Comput. Math.}},
  \textbf{\bibinfo{volume}{24}} (\bibinfo{year}{2006}), \bibinfo{pages}{35--56}.

\bibitem{Lee}
\bibinfo{author}{J.M. Lee},
\newblock \emph{\bibinfo{title}{Introduction to smooth manifolds}}, vol.
  \bibinfo{volume}{218} of \emph{\bibinfo{series}{Graduate Texts in
  Mathematics}}, \bibinfo{publisher}{Springer-Verlag}, \bibinfo{address}{New
  York} (\bibinfo{year}{2003}).

\end{thebibliography}
%\bibliographystyle{naturemag}

\end{document}